\newtheorem{theorem}{Theorem}
\newtheorem{prop}{Proposition}
\newtheorem{lemma}{Lemma}[prop]
\newtheorem{assumption}{Assumption}
\def\N{{\mathbb N}}
\def\R{{\mathbb R}}
\def\Z{{\mathbb Z}}
\def\P{{\mathbb P}}
\def\E{{\mathbb E}}
\def\Z{{\mathbb Z}}
\newcommand{\diff}{\mathop{}\mathopen{}\mathrm{d}}
\newcommand\croc[1]{\left\langle #1\right\rangle}
\newcommand\ind[1]{\mathbbm{1}_{\left\{#1\right\}}}
\newcommand{\steq}[1]{\stackrel{\text{\rm #1.}}{=}}
\def\cal{\mathcal}
\def\card{\mathrm{Card}}
\title{Analysis of Large Urn Models with Local Mean-Field Interactions }
\address[W. Sun, Ph. Robert]{INRIA Paris, 2 rue Simone Iff, F-75012 Paris, France}
\author{Wen Sun}\thanks{The author's work has been supported by a public grant overseen by the French National Research Agency (ANR) as part of the ``Investissements d'Avenir'' program (reference: ANR-10-LABX-0098).}
\email{Wen.Sun@inria.fr}
\author{Philippe  Robert}
\email{Philippe.Robert@inria.fr}
\urladdr{http://team.inria.fr/rap/robert}
\date{\today}
\keywords{Local Mean-Field Interaction; Nonlinear Markov processes. Urn Models}
\subjclass[2010]{Primary: 60J27,60K25; Secondary: 68M15}
\begin{document}

\begin{abstract}
  The stochastic models investigated in this paper describe the evolution of a set of  $F_N$ identical balls scattered into $N$ urns connected by an underlying symmetrical graph  with constant degree $h_N$. After some random  amount of time {\em all the balls} of any urn are redistributed  locally, among the $h_N$ urns of its neighborhood. The allocation of balls is done at random according to a set of weights which depend on the state of the system. The main original features of this context is that the cardinality $h_N$ of the range of interaction is not necessarily linear with respect to $N$ as in a classical mean-field context and, also, that the number of simultaneous jumps of the process is not bounded  due to the  redistribution of all balls of an urn at the same time. The approach relies on the analysis of the evolution of the local empirical distributions  associated to the state of urns located in the neighborhood of a given urn. Under convenient conditions, by taking an appropriate Wasserstein distance and by establishing several technical estimates for local empirical distributions, we are able to prove mean-field convergence results.

When the load per node goes to infinity, a convergence result for the invariant distribution of the associated McKean-Vlasov process is obtained for several allocation policies.  For the  class of power of $d$ choices policies,  we show that  the associated invariant measure has an asymptotic finite support property under this regime.  This result differs somewhat from the classical double exponential decay property usually encountered in the literature for power of $d$ choices policies.  
\end{abstract}

\maketitle

\vspace{-5mm}

\bigskip

\hrule

\vspace{-3mm}

\tableofcontents

\vspace{-1cm}

\hrule

\bigskip

\section{Introduction}\label{IntroSec}
The stochastic models investigated in this paper describe the evolution of a  set of  $N$ urns indexed by $i{\in}\{1,\ldots,N\}$ with $F_N$ identical balls. There is an underlying deterministic symmetrical  graph structure connecting the urns. The system evolves as follows. After some exponentially distributed  amount of time with mean $1$ {\em all the balls} of an urn with index $i{\in}\{1,\ldots,N\}$ say, are redistributed among a subset ${\cal H}^N(i)$ of urns in the neighborhood of urn $i$. An important feature of the model is that the allocation  of the balls into urns of ${\cal H}^N(i)$ is done at random according to a probability vector depending  on the number of balls  in the urns.

Quite general allocation schemes are investigated but two policies stand out because of their importance.  Their specific equilibrium properties are analyzed in detail in the paper. We describe them quickly. Assume that a ball of urn $i$ has to be allocated.
\begin{enumerate}
\item {\sc Random Policy.}\\
 The ball is allocated uniformly at random in one of the neighboring urns, i.e.\  in one  of the urns whose index is in ${\cal H}^N(i)$, this occurs  with probability $1/\card({\cal H}^N(i))$.
\item {\sc Power of $d$-Choices}\\
For this scheme, a subset of $d$ urns whose indices are in ${\cal H}^N(i)$ is taken at random, the ball is allocated to the urn having the least number of balls among these $d$ urns. Ties are broken with coin tossing. 
\end{enumerate}
Under some weak symmetry assumption and supposing that the state of the system can be represented by an irreducible finite state Markov process, at equilibrium the average number of balls per urn is $F_N/N$ whatever the allocation procedure is. The main problem considered in this paper concerns the distribution of the number of balls in a given urn when the number of urns is large. It may be expected that if the algorithm used to perform allocation is chosen conveniently, then there are few heavily loaded urns,  i.e.\  the probability of such event should be significantly small.  An additional desirable feature of such an algorithm  is that only a limited information should be required for the allocation of the balls. In our case this will be the knowledge of the occupancy of few urns among the neighboring urns. 

\bigskip

\begin{figure}[ht]
\center
\includegraphics[width=0.9\textwidth]{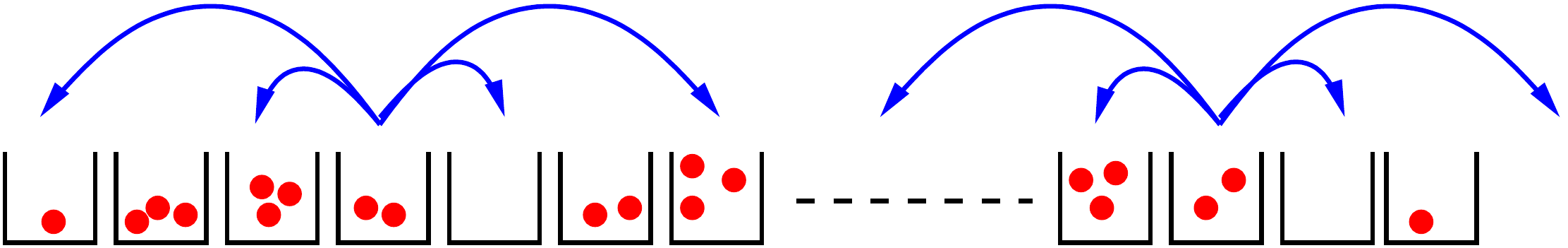}
\put(-85,45){\textcolor{blue}{${\cal H}^N(j)$}}
\put(-75,-10){\textcolor{blue}{$j$}}
\put(-260,45){\textcolor{blue}{${\cal H}^N(i)$}}
\put(-250,-10){\textcolor{blue}{$i$}}
\caption{Urn Model with Neighborhoods}
\end{figure}

\bigskip

\subsection{Urn Models and Allocation Algorithms in the Literature}
These problems have important applications in several domains, in physics to describe interactions of particles and their non-equilibrium dynamics, see~Ehrenfest~\cite{Ehrenfest},  Godr\`eche and Luck~\cite{Godreche} and Evans and Hanney~\cite{Evans} for recent overviews. Urn models have been used in statistical mechanics as toy models to understand conceptual problems related to non-equilibrium properties of particle systems.  As a model they describe the evolution a set of particles moving from one urn to another one. Ehrenfest~\cite{Ehrenfest} is one of the most famous models: in continuous time models, each particle moves at random to another urn after an exponentially distributed amount of time. There may also be  underlying structure for the urns, a ball can be allocated to the connected urns at random according to some weights on the corresponding edges or, via a metropolis algorithm, associated to some energy functional. Due to their importance and simplicity, these stochastic models have been thoroughly investigated over the years: reversibility properties, precise estimates of fluctuations, hitting times, convergence rate to equilibrium, \ldots\  See, for example Diaconis~\cite{Diaconis}. For these models balls are moved one by one. Closely related to these models is the zero range process for which the corresponding rate is $f(x)$ for some general function $f$ on $\N$.  When $f(\cdot)$ is constant the jumps are in fact associated with the urns instead of the balls, as it is our case. See Evans and Hanney~\cite{Evans}.

These problems  are also considered in  theoretical computer science to evaluate the efficiency of algorithms to allocate tasks to processors in a large scale computing network for example.  See Karthik et al.~\cite{Karthik}, Maguluri et al.~\cite{Maguluri} and Sun et al.~\cite{SSMRS2}. A classical problem in this context is the assignments of $N$ balls into  $N$ urns. Balls are assumed to be allocated one by one. The constraints in this setting are of minimizing the maximum  of the number of balls in the urns with a reduced information on the state of the whole system. When balls are distributed randomly, it has been proved that the maximum number of balls in an urn is, with high probability, of the order of $\log N/\log\log N$. See Kolchin et al.~\cite{Kolchin}. By using an algorithm of the type power of $d$-choices as described above,  Azar et al.~\cite{Azar} has shown that the maximum is of the order of $\log\log N/\log d$. Hence, with a limited information on the system, only the state of $d$ urns is  required, the improvement over the random policy is striking. See also Mitzenmacher~\cite{Mitzenmacher}.

A related problem is of assigning the jobs to the queues of $N$ processing units working at rate $1$. The jobs are assumed to be arriving according to  a Poisson process with rate $\lambda N$. When the natural stability condition $\lambda{<}1$ holds, if the jobs are allocated at random, then, at equilibrium, it is easily shown that the tail distribution of the number of jobs at a given unit decreases exponentially. If the allocation follows a power of $d$ choices, Vvedenskaya et al.~\cite{Dobrushin} has shown that the corresponding tail distribution has a double exponential decay, i.e.\ of the type $x{\to}\exp(-\alpha_1\exp(\alpha_2 x))$ for some positive constants $\alpha_1$ and $\alpha_2$.

The initial motivation of this work is coming from a collaboration with computer scientists to study the efficiency of duplication algorithms in the context of a large distributed system. The urns are the disks of a set of servers and the balls are  copies of files on these disks.  The redistribution of balls  of an urn correspond to a disk crash, in this case,  the copies of its lost files are retrieved on other neighboring disks.    See Sun et al.~\cite{SSMRS2} for more details on the modeling aspects.

\subsection{Results}

\subsection*{Mean-Field Convergence}
For the stochastic models investigated in this paper there are $N$ urns and a total of $F_N$ balls with $F_N{\sim}\beta N$ for some $\beta{>}0$. The underlying graph is symmetrical with degree $h_N{=}\card({\cal H}^N(i))$, $1{\le}i{\le}N$. The sequence $(h_N)$ is assumed to converge to infinity.  The state of the system is represented by a vector $\ell{=}(\ell_i,1{\le}i{\le}N)$ describing the number of balls in the urns.  The main mathematical difficulties in the analysis  of the stochastic model  have two sources:
\begin{enumerate}
\item {\sc Multiple Simultaneous Jumps}.\\
When the exponential clock associated to urn $i$ rings,  then all its balls are redistributed   to other urns of the system. For this reason, there are   $\ell_i{\in}\{1,\ldots,F_N\}$ jumps occurring simultaneously.
\item {\sc Local Search}.\\
Let
\begin{equation}\label{LED1}
\Lambda^N_i=\frac{1}{h_N}\sum_{k\in{\cal H}^N(i)} \delta_{\ell_k}
\end{equation}
be {\em the local empirical distribution} around urn $i$, where $\delta_a$ is the Dirac mass at $a{\in}\N$. 
 When  urn $i$ has to  allocate a ball, it is sent to one of $h_N$ neighboring urns, $j{\in}{\cal H}^N(i)$, with a probability of the order of $\Psi(\Lambda_i^N,\ell_j)/h_N$, where $\Psi$ is a   functional on $M_1(\N){\times}\N$, $M_1(\N)$ is the set of probability distributions on $\N$.
\end{enumerate}
This interaction with only local neighborhoods  and the unbounded number of simultaneous jumps are at the origin of the main technical difficulties to establish a mean-field convergence theorem.  See the quite intricate evolution equation~\eqref{SDE2} for  these local empirical distributions below.  More specifically, this is, partially,  due to  a factor $\ell_i/h_N$ which has to be controlled in several integrands of the evolution equations. This is where unbounded jumps play a role. See Relation~\eqref{eqB} for example.  A convenient Wasserstein distance~\eqref{Wass2} is introduced for this reason.

It should be noted a classical mean-field analysis can be achieved when the sequences  $(h_N)$ has a linear growth, this is in fact close to a classical mean-field framework with full interaction. In this case the term $\ell_i/h_N$ is not anymore a problem since $\ell_i$ is bounded by $F_N{\sim}\beta N$, The same is true if the simultaneous jumps feature is removed by assuming for example that only a ball is transferred for each event. In this case a mean-field result can be established with standard methods for the model with  neighborhoods.

\subsubsection*{Literature} For an introduction to the classical mean-field approach, see Sznitman~\cite{Sznitman}. Specific results for power of $d$ choices policies when $h_N{=}N$ are presented in Graham~\cite{Graham}, see also Luczak and McDiarmid~\cite{Luczak}. Budhiraja et al.~\cite{Budhiraja2} considers also such a local interaction for a queueing network with an underlying graph which is, possibly, random, with  jumps of size $1$. In this setting when the graph is deterministic, the mean-field analysis can be carried out by using standard arguments. Andreis et al.~\cite{Pra}  investigates mean-field of jump processes associated to neural networks  with a large number of simultaneous jumps with an infinitesimal amplitude. Lu\c{c}on and Stannat~\cite{Lucon} establishes mean-field results  in a diffusion context when the mean-field interaction involves particles in a box whose size is linear with respect to $N$ and a spatial component with possible singularities. A related setting is also considered in M\"uller~\cite{Muller}.

\subsection*{Asymptotic Finite Support Property}

The mean field results show that, for a fixed asymptotic load $\beta$ per urn and under appropriate conditions,  the evolution of the state of the occupancy of a given urn is converging in distribution to some non-linear Markov process $(L_\beta(t))$. { A probability distribution $\pi$ on $\N$ will be said to be {\em invariant} for this process when the following property holds: if the distribution of the initial value $L_\beta(0)$ is $\pi$ then, for any $T{>}0$, the processes $(L_\beta(t{+}T))$ and $(L_\beta(t))$ have the same distribution.}  In particular the distribution of $L_\beta(t)$ is constant for all $t{\ge}0$. 

We show that, for all classes of allocations considered in this paper, when it exists, an invariant distribution of $(L_\beta(t)/\beta)$ has at a tail which is upper-bounded by an  exponentially decreasing function.  For the random algorithm  this is  an exact  exponentially decreasing tail distribution in fact.  This implies that a small, but significant, fraction of urns will have an arbitrarily large load. For this reason the performances  of random algorithms are weak in terms of occupancy of urns. Recall that these algorithms do not use any information to allocate balls into urns, the question is that if some minimal information is used,  can we improve the order of magnitude of the load of a given urn?

For  the power of $d$ choices algorithm,  we show that, when the average load $\beta$ is large, the invariant distribution of $(L_\beta(t)/\beta)$ is converging in distribution to a  distribution with a support in {\em the compact interval} $[0,d/(d{-}1)]$. When $d{=}2$, this is a uniform distribution on $[0,2]$. The striking feature is that, for an average load of $\beta$ per urn, at equilibrium,  the occupancy of a given urn is at most $\beta d/(d{-}1)$ with probability arbitrarily close to $1$.  In some way this can be seen as the equivalent of the double exponential decay property of Vvedenskaya et al.~\cite{Dobrushin} in this context.  Note that, contrary to the model analyzed in Vvedenskaya et al.~\cite{Dobrushin}, we do not have an explicit expression for the invariant distribution of $(L_\beta(t))$. It should be noted that this is an asymptotic picture for $N$ large and also $\beta$ large. Experiments seem to show nevertheless that, in practice, this is an accurate description.  See  for example Figure~2 of Sun et al.~\cite{SSMRS2} where $d{=}2$, $N{=}200$ and $\beta{=}150$ and the uniform distribution on $[0,300]$ is quite neat. Explicit  bounds  on error terms would be of interest but seem to be out of reach for the moment. 

\subsection{Outline of the Paper}
The stochastic model is presented in Section~\ref{SecMod}. Section~\ref{SecEvol} introduces the main evolution equations for the local empirical distributions and establishes the existence and uniqueness properties of the corresponding asymptotic McKean{-}Vlasov process. The main convergence results are proved in Section~\ref{MFsec} via several  technical estimates.   Section~\ref{SecInv} is devoted to the analysis of the invariant distribution of the McKean{-}Vlasov process. The finite support property is proved in this section. 

\subsection*{Acknowledgments}
The paper has benefited from several useful remarks from two anonymous reviewers. The authors are really grateful for the work they have done on the first version of the paper. 
\section{The Stochastic Model}\label{SecMod}
We give a precise mathematical description of our system with $N$ urns and $F_N$ balls  with the following scaling assumption
\begin{equation}\label{Scaling}
\beta=\lim_{N\to+\infty} \frac{F_N}{N},
\end{equation}
for some $\beta{>}0$. An index $N$ will be added on the important quantities  describing the system to stress the dependence on $N$ but not systematically to make the various equations involved more readable.

 In this section, we describe the topology of the class of graphs considered and the allocation policies which are investigated. Finally, the mathematical tools used to represent and analyze the evolution of the state of the system,  see Section~\ref{SecEvol}, are introduced.

\subsection{Graph Structure}\label{subsecgraph}

We first describe the topology of the system, i.e.\ the set of links between the $N$ urns. The urns connected to  a given urn~$i$  determine the set of possible destinations  when the balls of node~$i$ are redistributed. 

The $N$ nodes are labeled from $1$ to $N$ and a  set  ${\cal H}^N{\subset}\{2,\ldots,N\}$ which is assumed to satisfy the following property of symmetry
\begin{equation}\label{Hsym}
  k{\in}{\cal H}^N \Leftrightarrow ({N{+}2{-}k}\  {\rm mod}\ {N}) {\in}{\cal H}^N,
\end{equation}
with the convention used throughout the paper that ($0$ mod $N$) is $N$. 

The set ${\cal H}^N$  is the set of neighbors of node $1$.  The set ${\cal H}^N(i)$ of neighbors of a node  $i{\in}\{1,\ldots,N\}$  is defined by translation in the following way
\[
{\cal H}^N(i)=\left\{(i{+}j{-}1 {\rm\  mod\  } N):j{\in}{\cal H}^N\right\}.
\]
In particular  ${\cal H}^N(1){=}{\cal H}^N$, one denotes by $h_N$ the cardinality of ${\cal H}^N$. Relation~\eqref{Hsym}  gives the property that the associated graph is symmetrical that is, if $i{\in}{\cal H}^N(j)$ then $j{\in}{\cal H}^N(i)$.  We give some simple situations of graphs satisfying these assumptions. 

\bigskip

\noindent
{\bf Examples.}
\begin{enumerate}
\item  {\sc Full Graph:} ${\cal H}_{{\rm cc}}^N{\steq{def}}\{2,\ldots,N\}$. \\
  The interaction between the nodes induced by this topology corresponds to the classical mean-field setting with a full interaction.

  \medskip
  
The two following examples exhibit a partial interaction, the size of the neighborhood being linear in $N$ or, with a more limited interaction, of the order of $\log N$. 
\item[]
\item {\sc  Torus:} for $\alpha{\in}(0,1)$,
  \[
    {\cal H}_{\alpha}^N{\steq{def}}\{(1{+}j {\rm \ mod\ } N): j{\in}[{-} \alpha N,  \alpha N]{\cap}\Z\}{\setminus}\{1\}.
    \]
\item[]
\item {\sc Log-Torus:} for $\delta{>}0$,
\[
    {\cal H}_{\log}^N{\steq{def}}\{(1{+}j {\rm \ mod\ } N): j{\in}\left({-}\lfloor \delta\log N\rfloor, \lfloor\delta\log N\rfloor\right){\cap}\Z\}{\setminus}\{1\}.
    \]
\end{enumerate}
\medskip

We have chosen that $1{\not\in}{\cal H}^N$ and, consequently, $i{\not\in}{\cal H}^N(i)$. For the allocation process it expresses the fact that a ball of a given urn cannot be re-allocated to this urn when the ball is redistributed. Our results in the following do not need this assumption in fact.  It just simplifies some steps of the proofs, to compute the previsible increasing processes of some martingales in particular.

\newpage

\setcounter{assumption}{19}
\bigskip

\hrule

\begin{assumption}[Topology]\label{CondT}
  \begin{enumerate}
    \item[]
\item The sequence of the degrees of nodes  $(h_N){\steq{def}}(\card({\cal H}^N))$ is converging to infinity.
\item[]
\item  The interaction set of node $i$, $1{\le}i{\le}N$, is defined as
\begin{equation}\label{range}
{\cal A}^N(i){\steq{def}} \left\{\rule{0mm}{4mm}j{\in}\{1,\ldots,N\}: {\cal H}^N(j){\cap}{\cal H}^N(i){\not=}\emptyset\right\}
\end{equation}
and its cardinality satisfies
    \[
\lim_{N\to+\infty}    \frac{{\card}({\cal A}^N(1))}{h_N^2}=0.
    \]
  \end{enumerate}
\end{assumption}

\bigskip

\hrule

\bigskip

{The set ${\cal A}^N(i)$ is the set of nodes which interact with a node in the neighborhood of $i$. Technically, this subset appears  naturally in the evolution equation~\eqref{SDE2} of the process $(\Lambda_i^N(t))$, the local empirical distribution at a given node $i{\in}\{1,\ldots,N\}$ defined by Relation~\eqref{LED1}.  The condition on its cardinality is used in Lemma~\ref{lemM} to prove that the martingale part of these equations is asymptotically negligible. See also Relation~\eqref{crocM}. 

  Assumption~\ref{CondT} is clearly satisfied for the examples described above, note that in these cases ${\card}({\cal A}^N(1)){\le}  C h_N$, for some constant $C$.
  }
\subsection{Allocation Algorithms}\label{subsecallocation}
A set of $F_N$ balls are scattered in the nodes, the state of the system is thus described by a vector $\ell{=}(\ell_i)\in{\cal S}_N$, with
\[
{\cal S}_N=\{(\ell_i)\in\N^N: \ell_1{+}\ell_2{+}\cdots{+}\ell_N{=}F_N\},
\]
for $i{\in}\{1,\ldots,N\}$, $\ell_i$ is the number of balls in urn $i$. For each $\ell{\in}{\cal S}_N$, one associates a probability vector $P^N(\ell){=}(p_{i}^N(\ell),1{\le}i{\le}N)$  with support on ${\cal H}^N$, i.e.\  $p^N_{i}{=}0$ if $i{\not\in}{\cal H}^N$. The vector $P^N(\ell)$ is in fact  the set of weights associated to urn $1$ in state $\ell$ and  $p_{i}^N(\ell)$ is the probability that a given ball of urn  $1$ is allocated to  urn $i$. As for the topology, we define the vector of weights for the other urns by translation. 

For $i{\in}\{1,\ldots, N\}$ a probability vector $P_i^N(\ell){=}(p_{ij}^N(\ell))$ with support on ${\cal H}^N(i)$ is defined  by
\begin{equation}\label{pij}
  p_{ij}^N(\ell){=}p_a^N(y),
\end{equation}
with $a{=}(j{-}i{+}1 {\rm \ mod\ }N)$  and $y{=}(\ell_{(i+k-1\ {\rm mod} N)}, 1{\le}k{\le}N)$, for any $j{\in}\{1,\ldots,N\}$. In particular $P_1^N(\ell){=}P^N(\ell)$ and $P_i^N(\cdot)$ is  the vector $P^N(\cdot)$ ``centered'' around node $i$. The quantity $p_{ij}^N(\ell)$ is the probability that, in state $\ell$, a ball of urn $i$ is allocated to urn $j$. 

 The dynamics are as follows: After an exponentially distributed amount of time with mean $1$,  the balls of an  urn  $i$, $1{\le}i{\le}N$, are distributed  into the neighboring urns  in the subset  ${\cal H}^N(i)$ one by one, according to some policy depending  on the state $\ell{=}(\ell_j)$ of the system just before the event.  Each of the $\ell_i$ balls of the $i$th urn is distributed on ${\cal H}^N(i)$ according to the probability vector $P_i(\ell)$ and the corresponding $\ell_i$ choices are assumed to be independent.  As it will be seen our asymptotic results hold under general assumptions, the following cases will be discussed due to their  practical importance.

 \noindent
{\bf Examples.}

\begin{enumerate}
\item {\sc Random Algorithm.}\\
  Balls of a given urn $i{\in}\{1,\ldots,N\}$ are sent uniformly at random into an urn with index in ${\cal H}^N(i)$. This is the simplest policy which is used in large distributed systems. 
This corresponds to the case where
\begin{equation}\label{RandAlg}
  p_{j}^N(\ell){=}{1}/{h_N},\quad j{\in}{\cal H}^N.
\end{equation}
\item[]\item {\sc Random Weighted Algorithm.}\\
  Each  ball is sent into  urn $j{\in}{\cal H}^N(i)$  with a probability proportional to $W(\ell_j)$, that is 
\begin{equation}\label{RandWeiAlg}
  p_{j}^N(\ell)=W(\ell_j)\left/\displaystyle\sum_{k{\in}{\cal H}^N}W(\ell_k),\quad j{\in}{\cal H}^N\right.,
\end{equation}
where  $W$ is some function on $\N$ with values in $(c,C)$  with $0{<}c{<}C$.
\item[]\item {\sc Power of $d${-}choices,  $d{\ge}2$.}\\
  For each ball, $d$ urns  are chosen at random in ${\cal H}^N(i)$, the ball is allocated  to the urn of this subset having the minimum number of balls. Ties are broken with coin tossing. {The probability of assigning a ball to an urn with at least $m$ balls, $m{\in}\N$, is therefore
  \[
\binom{\sum_{k\in{\cal H}^N(i)}\ind{\ell_k\ge m}}{d}\left/\displaystyle\binom{h_N}{d}\right.,
  \]
with the convention that $\binom{n}{d}{=}0$ if $n{<}d$. Hence, by taking into account the ties, for $j{\in}{\cal H}^N$, }
\begin{multline}\label{PowAlg}
p_{j}^N(\ell)=\\\left[\binom{\sum_{k\in{\cal H}^N}\ind{\ell_k\ge \ell_j}}{d}-\binom{\sum_{k\in{\cal H}^N}\ind{\ell_k>\ell_j}}{d} \right]\left/\displaystyle\binom{h_N}{d}\sum_{k\in{\cal H}^N}\ind{\ell_k=\ell_j}\right.,
\end{multline}
since $p_{j}^N(\ell)$ is the probability of assigning a ball to a {\em given} urn of ${\cal H}$ with $\ell_j$ balls.
\end{enumerate}

\bigskip

{For simplicity we have chosen to consider only the state of the system just before the jump for all the balls which have to be moved. The state of the system could be also updated after each ball allocation and, consequently the vector $p_i^N(\cdot)$, the dynamical description would then be more intricate to express.}

\medskip

It should be noted that the exponential clock associated to the $i$th urn gives  simultaneous jumps of $\ell_j$ coordinates with high probability if $N$ is large. In particular the magnitude of a possible jump is not bounded which leads to significant technical complications to prove a mean-field result as it will be seen. 

\bigskip

\noindent
We now introduce the main assumption on the allocation policies investigated. It describes the asymptotic behavior of the probability vector $(P^N(\cdot))$ when $N$ is large with a key functional $\Psi$ which is playing an important role in the subsequent analysis. The non-linear part of the dynamic of the associated McKean{-}Vlasov process is essentially expressed with this functional. See Relation~\eqref{McKV} of Section~\ref{SecEvol} for example.

\bigskip

\hrule

\medskip

\setcounter{assumption}{0}
\begin{assumption}[Allocation Algorithm]\label{CondA}
  \begin{enumerate}
    \item[]
\item {\em  The sequence $(P^N(\cdot)){=}(p_i^N(\cdot))$  satisfies the relation
\begin{equation}\label{PPsi}
\lim_{N\to+\infty}\sup_{\substack{i\in{\cal H}^N\\\ell\in{\cal S}_N}}   \left| h_Np_i^N(\ell){-}\Psi\left(\frac{1}{h_N}\sum_{j{\in}{\cal H}^N} \delta_{\ell_j},\ell_i\right)\right|=0,
\end{equation}
where $\Psi$ is a non-negative bounded function on $M_1(\N){\times}\N$ such that, for any $\sigma{\in}M_1(\N)$,
\begin{equation}\label{Psimass}
\int_\N \Psi(\sigma,x)\,\sigma(\diff x)=1.
\end{equation}
\item  There exist constants $C_{\Psi}$, $D_{\Psi}{>}0$ such that  
\begin{equation}\label{LipPsi}
\displaystyle  \begin{cases}
\displaystyle    \left|\Psi(\sigma,l){-}\Psi(\sigma,l')\right|\leq C_\Psi \left|l{-}l'\right|,\\
\displaystyle        \left|\Psi(\sigma,l){-}\Psi(\sigma',l)\right|\leq    D_\Psi \left\|\sigma{-}\sigma'\right\|_{{\rm tv}}
 \end{cases}
\end{equation}
holds for any $(\sigma,l)$ and $(\sigma',l'){\in}M_1(\N){\times}\N$.}
\end{enumerate}
\end{assumption}

\medskip

\hrule

\bigskip

\noindent
The set $M_1(\N)$ is the space of probability distributions on $\N$  and $\|{\cdot}\|_{{\rm tv}}$  is the total variation norm defined by, if $\mu_1$, $\mu_2{\in}M_1(\N)$
\begin{multline}\label{TV}
  \|\mu_1{-}\mu_2\|_{{\rm tv}}=\sup\left(\rule{0mm}{4mm} |\croc{\mu_1,f}{-}\croc{\mu_2,f}|: f{:}\N\to\{0,1\}\right)\\
  =\inf_{\pi{\in}[\mu_1,\mu_2]}\left(\int_{\N^2} \ind{x{\not=}y}\pi(\diff x,\diff y) \right),
\end{multline}
where $[\mu_1,\mu_2]$ is the set of couplings $Q$ associated to $\mu_1$ and $\mu_2$,  elements $Q{\in}M_1(\N^2)$ such that $Q(\diff x,\N){=}\mu_1$ and $Q(\N,\diff y){=}\mu_2$.
See Proposition~4.7 of Levin et al.~\cite{Peres}.

\noindent

Relation~\eqref{Psimass} is a conservation of mass condition, all balls  are reallocated. As it will be seen in the investigation of mean-field convergence, the specific non-linear term in the limiting dynamical system is expressed with the functional $\Psi$, Condition~\eqref{LipPsi} is just a classical Lipschitz condition as it is quite common in such a setting.

\bigskip

\noindent
    {\bf Examples.}

    \medskip

    \noindent
{\sc Random Weighted Algorithm.}\\
  Assumption~\ref{CondA} is satisfied with the function $\Psi$ given by 
  \[
  \Psi_{\rm cc}(\sigma,l)=W(l)\left/\int W(x)\,\sigma(\diff  x)\right.
  \]
  if the range of $W$ is in $[c,C]$, for some positive constants $c$ and $C$. 

  \noindent
{\sc Power of $d${-}choices.}\\
It is not difficult to check that  Assumption~\ref{CondA} is satisfied with
  \[
  \Psi_{\rm pc}(\sigma,l){=}\frac{(\sigma([l,{+}\infty))^d{-}\sigma((l,{+}\infty))^d}{\sigma(\{l\})},
    \]
    with the convention that $0/0{=}0$. 

\subsection{Stochastic Representation of the Dynamics of Allocation}
In order to use a convenient stochastic calculus to study these allocation algorithms, one has to introduce marked Poisson processes.   See Chapter~5 of Kingman~\cite{Kingman}  for an introduction on marked  Poisson point processes. 

We define the space of marks ${\cal M}{=}[0,1]^{\N}$, a mark $u{=}(u_k){\in}{\cal M}$ associated to an urn  $i{\in}\{1,\ldots,N\}$  describes how the balls of this  urn are allocated in the system: If the state is $\ell{=}(\ell_j)$, assuming that the balls are indexed by $1{\le}k{\le}\ell_i$, the $k$th ball  is allocated to urn $j{\in}\{1,\ldots,N\}$ if
\begin{equation}\label{Im}
u_k\in I_{ij}(\ell){\steq{def}}\left[\sum_{n=1}^{j-1} p_{in}^N(\ell),  \sum_{n=1}^{j} p_{in}^N(\ell)\right),
\end{equation}
if $u_k$ is a uniform random variable on $[0,1]$, this occurs with probability $p_{ij}^N(\ell)$. 
For  $i$, $j{\in}\{1,\ldots,N\}$, we introduce the family of mappings $Z_{ij}$ on ${\cal M}{\times}{\cal S}_N$ defined by, for  $u{\in}{\cal M}$ and $\ell{\in}{\cal S}_N$, 
\begin{equation}\label{Zu}
Z_{ij}(u,\ell)\steq{def}\sum_{k=1}^{\ell_i} \ind{u_k\in I_{ij}(\ell)},
\end{equation}
the quantity $Z_{ij}(u,\ell)$ is the number of balls of urn $i$ which are allocated  to urn $j$ if the $i$th urn is emptied when the system is in state $\ell$ and with mark $u$. If $U{=}(U_k)$ is an i.i.d.\  sequence of uniform random variables on $[0,1]$ then, clearly, for $i{\in}\{1,\ldots, N\}$ and $j\in{\cal H}^N(i)$, and $\ell{\in}{\cal S}_N$, 
\begin{equation}\label{Mij}
(Z_{ij}(U,\ell), j{\in}\{1,\ldots,N\})\steq{dist} B_{i}(\ell)(\diff z_1,\ldots,\diff z_N),
\end{equation}
where $B_i(\ell)$ is a multinomial distribution with parameters $\ell_i$ and $p^N_{i1}(\ell)$, \ldots,  $p^N_{iN}(\ell)$, in particular
\begin{equation}\label{Bij}
 Z_{ij}(U,\ell)\steq{dist} B_{ij}(\ell)(\diff z),
\end{equation}
$B_{ij}(\ell)$ is a  binomial distribution with parameter $\ell_i$ and $p_{ij}^N(\ell)$. 

Let $\overline{{\cal N}}$ be a marked Poisson point process on $\R_+{\times}{\cal M}$ with intensity measure
\[
\diff t{\otimes}\prod_{i=1}^{+\infty}\diff u_i
\]
on $ \R_+{\times}{\cal M}$. Such a process can be represented as follows.  If ${\cal N}{=}(t_n)$ is a standard Poisson process on $\R_+$ with rate $1$ and $((U_k^{n})$, $n{\in}\N)$  is a sequence of  i.i.d.\ sequences of uniform random variables on $[0,1]$, then the point process $\overline{{\cal N}}$  on $\R_+{\times}[0,1]^{\N}$ can be defined by 
\[
\overline{{\cal N}}{=}\sum_{n\geq 1}\delta_{(t_n,(U_k^{n}))},
\]
where $\delta_{(a,b)}$ is the Dirac mass at $(a,b)$.  If $A{\in}{\cal B}(\R_+{\times}{\cal M})$ is a Borelian subset of $\R_+{\times}{\cal M}$,
\[
\overline{{\cal N}}(A)=\int_{A}\,\overline{{\cal N}}(\diff t,\diff u)
\]
is the number of points of $\overline{{\cal N}}$ in $A$. We denote  by ${\cal N}$ the point process on $\R_+$  defined by the first coordinates  of the points of  $\overline{{\cal N}}$, i.e.\   ${\cal N}(\diff t){=}\overline{{\cal N}}(\diff t, {\cal M})$, ${\cal N}$ is a Poisson process on $\R_+$ with rate $1$. 
We denote by $\overline{\cal N}_i$, $i{\in}\N$, i.i.d.\  marked Poisson point processes with the same distribution as $\overline{\cal N}$.

The martingale property mentioned in the following  is associated  to the filtration $({\cal F}_t)$ defined as follows, ${\cal F}_0$ is the $\sigma$-field associated to the initial state of the urn process and, for $t{>}0$,
\[
{\cal F}_t=\sigma\croc{{\cal F}_0, \overline{\cal N}_i([0,s]{\times}B): i\in\N, s{\le}t, B{\in} {\cal B}\left({\cal M}\right)}.
\]
We  recall an elementary result concerning the martingales associated to marked Poisson point processes. It is used throughout the paper.  See Section~4.5 of Jacobsen~\cite{Jacobsen}, see also Last and Brandt~\cite{Last} for more details. 
\begin{prop}\label{MPPMart}
For $1{\le}i{\le}N$, if $h$ is a Borelian function on $\R_+{\times}{\cal M}$  c\`adl\`ag on the first coordinate and such that 
\[
\int_{[0,t]\times {\cal M}}h(s,u)^2\,\diff s\diff u{<}{+}\infty, \quad \forall t\geq 0,
\]
where $\diff u$ denotes the product of Lebesgue measures on ${\cal M}{=}[0,1]^{\N}$, then the process
\[
(M(t))\steq{def}\left(\int_{[0,t]\times {\cal M}} h(s{-},u))\overline{\cal N}_i(\diff s, \diff u)-\int_{[0,t]\times {\cal M}}h(s,u)\,\diff s\diff u\right)
  \]
  is a square integrable martingale with respect to the filtration $({\cal F}_t)$, its previsible increasing process is given by
  \[
\left(\croc{M}(t)\right)=\left(\int_{[0,t]\times{\cal M}}h(s,u)^2\,\diff s\diff u\right).
  \]
\end{prop}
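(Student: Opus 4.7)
The plan is to establish both claims first for an elementary class of integrands and then extend by an $L^2$-isometry argument. Let ${\cal E}$ denote the linear span of integrands of the form $h(s,u){=}\mathbbm{1}_{(a,b]}(s)\,g(u)$ with $0{\le}a{<}b$ and $g$ a bounded Borelian function on ${\cal M}$. For such an $h$, the random variable $\int_{(a,b]{\times}{\cal M}} g(u)\,\overline{{\cal N}}_i(\diff s,\diff u)$ is distributed as a compound Poisson sum of i.i.d.\ copies of $g(U)$, where $U$ has distribution $\diff u$, taken over the Poisson count of points of $\overline{{\cal N}}_i$ in $(a,b]{\times}{\cal M}$. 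Using the independence of disjoint increments of $\overline{{\cal N}}_i$ together with the Poisson moment identities (mean and variance equal to $\lambda$), one obtains directly $\E[M(t){-}M(s)\mid{\cal F}_s]{=}0$ and
\[
\E\!\left[(M(t){-}M(s))^2\mid{\cal F}_s\right]=\int_{(s,t]{\times}{\cal M}}h(r,u)^2\,\diff r\diff u,
\]
and by linearity these identities extend to all of ${\cal E}$.

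For general $h$ satisfying the hypotheses, I would approximate $h$ by a sequence $(h_n){\subset}{\cal E}$ in $L^2([0,t]{\times}{\cal M},\diff s\diff u)$ for each $t{\ge}0$, which is possible because ${\cal E}$ is dense in $L^2(\diff s\diff u)$. The $L^2$ identity on ${\cal E}$ shows that the corresponding compensated integrals $(M_n(t))$ form a Cauchy sequence in $L^2(\P)$, uniformly on compact time intervals by Doob's inequality. The limit defines a square integrable martingale that coincides for each $t$ with the random variable $M(t)$ of the statement, and the formula $\croc{M}(t){=}\int_{[0,t]{\times}{\cal M}}h(s,u)^2\,\diff s\diff u$ follows by passing to the limit in $L^1$ in the identity $\E[(M_n(t){-}M_n(s))^2\mid{\cal F}_s]{=}\int_{(s,t]{\times}{\cal M}}h_n(r,u)^2\,\diff r\diff u$, together with the fact that the process $t{\mapsto}\int_{[0,t]{\times}{\cal M}}h^2\,\diff s\diff u$ is continuous, hence predictable.

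The c\`adl\`ag-in-time hypothesis plays only a technical role: it guarantees that $h(s{-},u)$ is a predictable version of $h$ that differs from $h(s,u)$ only on a countable (hence $\overline{{\cal N}}_i$-almost-surely avoided) set of times, so that the stochastic integral is unambiguously defined, and it allows the approximating sequence $(h_n)$ to be chosen within the predictable class. The main obstacle, though standard, is precisely this bookkeeping of predictability under $L^2$ approximation; everything else reduces to the Poisson moment calculations of the first paragraph. In fact the statement is covered by Section~4.5 of Jacobsen~\cite{Jacobsen} and by Last and Brandt~\cite{Last}, and I would refer to these sources for the omitted technical details.
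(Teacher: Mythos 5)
The paper does not actually prove this proposition; it records the result and refers the reader to Section~4.5 of Jacobsen~\cite{Jacobsen} and to Last and Brandt~\cite{Last}. Your outline is a correct rendering of the standard argument underlying those references: Poisson moment identities on elementary product integrands $\mathbbm{1}_{(a,b]}(s)g(u)$, the martingale property from independence of disjoint increments, predictability of the deterministic continuous compensator $t{\mapsto}\int_{[0,t]\times{\cal M}}h^2\,\diff s\diff u$, and $L^2$-isometry plus Doob's inequality to extend to general square-integrable $h$. Two points of bookkeeping, not genuine gaps, deserve a mention. First, for a general $h\in L^2(\diff s\diff u)$ the pathwise Lebesgue--Stieltjes integral against the counting measure $\overline{\cal N}_i$ need not converge absolutely; the object $\int h(s{-},u)\,\overline{\cal N}_i(\diff s,\diff u)$ is properly defined as the $L^2$-limit of the elementary compensated integrals, so the identification with the displayed expression requires either reading it in that $L^2$ sense or imposing local $\diff s\diff u$-integrability of $|h|$. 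In every application in the paper $h$ is bounded on compact time intervals, so this is automatic, but it should be flagged. Second, the conditional identity $\E[(M(t){-}M(s))^2\mid{\cal F}_s]{=}\int_{(s,t]\times{\cal M}}h^2\,\diff r\diff u$ requires that the increment of $\overline{\cal N}_i$ over $(s,t]$ be independent of ${\cal F}_s$; this holds because the $\overline{\cal N}_j$, $j{\in}\N$, are mutually independent, but it is worth stating explicitly since ${\cal F}_t$ is generated by \emph{all} the marked Poisson processes, not just $\overline{\cal N}_i$.
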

\noindent
We conclude this section with some notations which will be used throughout the paper. 

\subsection{Wasserstein Distances}
Throughout the paper  $M_1(X)$ denotes the set of probability distributions on the set $X$.  If $\mu{\in}M_1(\N)$ and $f{:}\N\to \R$,
\[
\croc{\mu,f}\steq{def}\int_\N f(x)\,\mu(\diff x)=\sum_{k\in\N} f(k)\mu(\{k\}),
\]
provided that the latter term is well defined. The function $I$ on $\N$ is the identity function, $I(x){\steq{def}}x$, $x{\in}\N$. 

The space $M_1(\N)$  endowed with the total variation norm is a separable Banach space.
For $p{>}0$, we define the Wasserstein distance, 
\begin{equation}\label{Wp}
  W_p(\sigma_1,\sigma_2)=\inf_{ Q{\in}[\sigma_1,\sigma_2]}\left(\int_{\N^2}|x{-}y|^p\,Q(\diff x{,}\diff y)\right)^{1/p},
\end{equation}
 where the set $[\sigma_1,\sigma_2]$ of couplings of $\sigma_1$ and $\sigma_2$ is defined below Relation~\eqref{TV}.

For $T{\ge}0$, we will denote  by ${\cal C}([0,T],M_1(\N))$ (resp. ${\cal D}([0,T],M_1(\N))$)  the space of continuous (resp.  c\`adl\`ag)  functions with values in $M_1(\N)$. We denote by $d_T(\cdot,\cdot)$ the distance associated to the topology of ${\cal D}_T{\steq{def}}{\cal D}([0,T],M_1(\N))$ as defined in Section~3.5 of Ethier and Kurtz~\cite{Ethier},  in this way  $({\cal D}_T,d_T)$ is a complete separable metric space. 

We introduce the  Wasserstein metric ${\cal W}_T$ on the corresponding stochastic process, on $M_1({\cal D}_T)$, the space of c\`adl\`ag processes with values in $M_1(\N)$. Let $P_1$ and $P_2{\in}M_1({\cal D}_T)$,
\begin{equation}\label{Wass}
{\cal W}_T(P_1,P_2){\steq{def}}\inf_{Q\in[P_1,P_2]}\int_{{\cal D}_T^2}\left(d_T(\Lambda_1,\Lambda_2){\land}1\right) \,Q(\diff\Lambda_1,\diff\Lambda_2),
\end{equation}
where, for $a$, $b{\in}\R$, $a{\land}b{=}\min(a,b$ and $[P_1,P_2]$ is, similarly to Definition~\eqref{TV}, the set of couplings associated to  distributions $P_1$ and $P_2$,  i.e.\   $Q{\in}[P_1,P_2]$ is an element of $M_1({\cal D}_T^2)$ with  marginals are $P_1$ and $P_2$ respectively. The metric space $(M_1({\cal D}_T),{\cal W}_T)$ is complete and separable.  See Section~3 of Dawson~\cite{Dawson} for a more specific presentation of measure-valued stochastic processes.

We will use the stronger Wasserstein distance $\overline{{\cal W}}_T$ to establish our convergence results,
\begin{equation}\label{Wass2}
\overline{{\cal W}}_T(P_1,P_2){\steq{def}}\inf_{Q\in[P_1,P_2]}\int_{{\cal D}_T^2} \left(\sup_{s\leq T}\|\Lambda_1(s){-}\Lambda_2(s)\|_{{\rm tv}}\right) \,Q(\diff\Lambda_1,\diff\Lambda_2).
\end{equation}

\section{Evolution Equations}\label{SecEvol}
In this section, we introduce the stochastic processes used to represent the evolution of the state of the system as well as the stochastic differential equations (SDE) they satisfy.

The state of the system at time $t$ is denoted by a c\`adl\`ag process $(L^N(t))$, with 
\[
(L^N(t))=\left(\left(L_i^N(t),1{\leq}i{\leq}N\right)\right)\in{\cal S}_N,
\]
$L_i^N(t)$ is the number of balls in urn $i$ at time $t$.
One defines the {\em local empirical distribution} at $i$, $1{\le}i{\le}N$, at time $t{\ge}0$ by 
\begin{equation}\label{LED}
\Lambda^N_i(t)\steq{def}\frac{1}{h_N}\sum_{j\in{\cal H}^N(i)}\delta_{L_j^N(t)},
\end{equation}
the global empirical distribution is, classically, 
\begin{equation}\label{ED}
\Lambda^N(t)\steq{def}\frac{1}{N}\sum_{j=1}^N\delta_{L_j^N(t)}=\frac{1}{N}\sum_{j=1}^N \Lambda^N_j(t).
\end{equation}

\medskip
The process $(L_i^N(t))$  can be represented as the solution of the following SDE, for $1{\le}i{\le}N$,
\begin{equation}\label{SDE}
  \diff L_i^N(t)=\sum_{j\in{\cal H}^N(i)} \int_{{\cal M}} Z_{ji}(u,L^N(t{-}))\,\overline{{\cal N}}_j(\diff t, \diff u)-L_i^N(t{-})\,{\cal N}_i(\diff t),
\end{equation}
where $f(t{-})$ denotes the left limit of the function $f$ at $t{>}0$. For  $i{\in}\{1,\ldots,N\}$,  the  points of the  process  ${\cal N}_i(\diff t)$ correspond to the instants when the $i$th urn is emptied. Recall the notation ${\cal N}_i(\diff t){=}\overline{\cal N}_i(\diff t, {\cal M})$. If time $t$ is one of these instants, due to the uniform  distribution assumption of  the variables $(U_k^\cdot)$, Relation~\eqref{Im} gives that, conditionally on ${\cal F}_{t-}$,  a ball from urn $i$ is allocated to urn $j$ with probability $p_{ij}^N(L^N(t-))$. This shows that the solution of Equation~\eqref{SDE} does represent our allocation process  of balls into the urns. 

\bigskip

\hrule

\medskip

\setcounter{assumption}{8}
\begin{assumption}[Initial State]\label{CondI} 
\begin{enumerate}
  {

  \item[]
    \item Invariance by translation.\\The  distribution of the vector $L^N(0){\steq{def}}(L_i^N(0))$ is such that
\begin{equation}\label{InvTrans}
\left(L_1^N(0),L_2^N(0),\ldots,L_N^N(0)\right)\steq{dist}\left(L_2^N(0),L_3^N(0),\ldots,L_N^N(0),L_1^N(0)\right)
\end{equation}
and  $L^N(0){\in}{\cal S}_N$, i.e.\  $L_1^N(0){+}L_2^N(0){+}\cdots{+}L_N^N(0)=F_N.$
\item[]
\item The local empirical distribution of the initial state  converges in distribution, for the total variation distance, to a random variable  $\Pi_0{\in}M_1(\N)$, i.e.\  to a random probability distribution on $\N$,
\begin{equation}\label{tvInit}
  \lim_{N\to+\infty} \E\left(\rule{0mm}{4mm}\|\Lambda_1^N(0){-} \Pi_0\|_{{\rm tv}}\right)=0,
\end{equation}
and such that  $\P(\croc{\Pi(0),I}{=}\beta){=}1$.
\item There exists some $\eta{>}0$ such that
\begin{equation}\label{expInit}
\sup_{N\in\N}\E\left(\int_\N e^{\eta x}\Lambda_1^N(\diff x)\right) =\sup_{N\in\N}\E\left(e^{\eta L_1^N(0)}\right)<{+}\infty. 
\end{equation}
    }
\end{enumerate}
\end{assumption}

\medskip

\hrule

\bigskip

{
  \noindent
Relation~\eqref{InvTrans} implies that, for $1{\le}i{\le}N$, $L_i^N(0)$ has the same distribution as $L_{i+1}^N(0)$, and therefore they are identically distributed. Definition~\eqref{Hsym} of the topology of the graph by translation of the set of neighboring nodes gives that  the local empirical distribution $\Lambda_i^N(0)$ at node $i$ has the same distribution as $\Lambda_1^N(0)$. The dynamics of the  evolution, see Equation~\eqref{SDE}, are also invariant (in distribution) under translation, consequently, for $t{\ge}0$, the variable $L_i^N(t)$ [resp. $\Lambda_i^N(t)]$ have also the same distribution as $L_1^N(t)$ [resp. $\Lambda_1^N(t)$].
}

\subsection*{Evolution Equations for Local Empirical Distributions}
Recall that, for any function $f$ with finite support on $\N$, 
\[
\croc{\Lambda^N_i(t),f}\steq{def} \int_\N f(x)\, \Lambda^N_i(t)(\diff x),
\]
The SDE~\eqref{SDE} can then be rewritten  in the following way,
\begin{multline}\label{SDE2}
\croc{\Lambda^N_i(t),f}=
\croc{\Lambda^N_i(0),f}+\frac{1}{h_N}\sum_{j{\in}{\cal H}^N(i)}\int_0^t \left[f(0){-}f(L_j^N(s{-}))\right]\,{\cal N}_j(\diff s)\\
{+}\frac{1}{h_N}\,\sum_{\mathclap{\substack{j{\in}{\cal A}^N(i)\\k\in{\cal H}^N(i)\cap{\cal H}^N(j)}}}\quad\int_{[0,t]\times{\cal M}}\left[f\left(\rule{0mm}{4mm}L_k^N(s{-}){+}Z_{jk}(u,L^N(s{-}))\right){-}f(L_k^N(s{-}))\right]\,\overline{\cal N}_j(\diff s, \diff u),
\end{multline}
where the variables $(Z_{jk}(\cdot,\cdot))$ are defined by Relation~\eqref{Zu} and ${\cal A}^N(i)$ by Relation~\eqref{range}.
\subsection{A Heuristic Asymptotic Description}
 We first present an informal, hopefully intuitive, motivation for the asymptotic SDE satisfied by the time evolution of the number of balls in a given urn. It should be noted that we will not establish our mean-field result in the same way. The method can be used in a simpler setting, see Sun et al.~\cite{SSMRS2}. It does not seem to be possible for our current model. 

We assume for this section  that the distribution of the $\Pi_0$ of Relation~\eqref{tvInit} is a Dirac measure at  $\pi_0{\in}M_1(\N)$. 
The integration of Equation~\eqref{SDE} and the use of Proposition~\ref{MPPMart} lead to the relation
\begin{equation}\label{eqL}
L_i^N(t)= L_i^N(0)+ C_i^N(t) -\int_0^t L_i^N(s{-})\,\overline{\cal N}_i(\diff s, {\cal M})
\end{equation}
 where $(C^N_i(t))$ is the process associated to the interaction of the nodes in the neighborhood of $i$,
   \[
C^N_i(t)= \sum_{j\in{\cal H}^N(i)} \int_{[0,t]\times{\cal M}} Z_{ji}(u,L^N(s{-}))\,\overline{{\cal N}}_j(\diff s, \diff u).
\]
As it will be seen, under Condition~\ref{CondA} of Section~\ref{subsecallocation},  with high probability the process $$(Z_{ji}(u,L^N(s{-})),u{\in}{\cal M},s{\le}t)$$ is either $0$ or $1$ and, consequently, $(C^N_i(t))$ is a counting process.  {This comes essentially from the fact  that, on a bounded time interval, when the balls of an urn are re-distributed,  the probability of having two balls assigned to the same urn of a given neighborhood will be of the order of $1/h_N$.}
Additionally,  the process 
$(C^N_i(t){-}\widehat{C}^N_i(t))$ is a martingale,
\[
\left(\widehat{C}^N_i(t)\right)=\left(\sum_{j\in{\cal H}^N(i)} \int_0^t L_j^N(s)p_{ji}^N(L^N(s)) \diff s\right)
\]
is the {\em compensator} of $(C_i^N(t))$.  See Example~4.3.4 p.~56 of Jacobsen~\cite{Jacobsen}. With the definition of $(p_{ji}^N(\ell))$, Relation~\eqref{PPsi} of Condition~\ref{CondA} gives the equivalence
\[
\left(h_Np_{ji}^N(L^N(t))\right)\sim \left(\Psi(\Lambda_j^N(t),L_i^N(t))\right) 
\]
with high probability on finite time intervals, then
\[
\left(\widehat{C}^N_i(t)\right)\sim \left(\int_0^t \frac{1}{h_N}\sum_{j\in{\cal H}^N(i)} L^N_j(s)  \Psi(\Lambda_j^N(t),L_i^N(s))\,\diff s\right).
\]
Assuming   that  the local empirical measures $(\Lambda^N_j(t))$ [resp. the processes $(L^N_j(t))$] are converging in distribution to a continuous deterministic process $(\Lambda(t))$ [resp. to a process $(L(t))$]. In particular $(\Lambda(t)(f)){=}(\E(f(L(t))))$ and, due to the fact that $L^N(t){\in}{\cal S}_N$ and the scaling assumption~\eqref{Scaling},
\[
\croc{\Lambda(t),I}{=}\E(L(t)){=}\beta,\quad \forall t{\ge}0,
\]
where $I(x){=}x$ is the identity. Under this hypothesis, one would have the equivalence in distribution for the compensator of $(C^N_i(t))$,
\begin{multline*}
\left(\widehat{C}^N_i(t)\right)\sim \left(\int_0^t  \Psi\left(\Lambda(s),L^N_i(s)\right) \frac{1}{h_N}\sum_{j\in{\cal H}^N(i)} L^N_j(s) \,\diff s.\right)
\\\sim \left(\widehat{C}(t)\right)\steq{def}\left(\beta\int_0^t\Psi(\Lambda(s),L(s))\,\diff s\right).
\end{multline*}
This suggest that the sequence of counting processes $(C_i^N(t))$ is converging in distribution to a counting process $(C(t))$ given by
\[
(C(t))=\left(\int_0^t \overline{\cal P}\left(\rule{0mm}{4mm}\diff s{\times}[0,\beta \Psi(\Lambda(s),L(s{-}))]\right)\right),
\]
where $\overline{\cal P}$ is an homogeneous Poisson point process on $\R_+^2$.

In view of Relation~\eqref{eqL}, a possible limit $(L_1^N(t))$ of $(L_1^N(t))$ when $N$ is large should satisfy the following SDE
\[
\diff L(t)= \overline{\cal P}\left(\rule{0mm}{4mm}\diff t{\times}[0,\beta \Psi(\Lambda(t),L(t{-}))]\right) - \overline{\cal N}_1(\diff t, {\cal M}).
\]
 We first establish the existence and uniqueness of such a process. The proof of the convergence in distribution of $(L^N_i(t))$, $1{\le}i{\le}N$, to this asymptotic process is achieved in the next section.
\subsection{The McKean{-}Vlasov process}

\begin{theorem}\label{theoMcKV}
If the functional $\Psi$ satisfies Condition~\ref{CondA}, there exists a unique c\`adl\`ag process $(L(t))$ with an initial condition $L(0){\steq{dist}}\pi(0){\in}M_1(\N)$ and such that  the SDE
\begin{equation}\label{McKV}
\diff L(t) =\overline{\cal P}\left(\diff t{\times}\left[0,\beta\,\rule{0mm}{4mm} \Psi(\pi(t),L(t{-}))\right]\right){-}L(t{-}) {\cal P}(\diff t)
\end{equation}
holds, where, for $t{>}0$, $\pi(t)$ is the distribution of $L(t)$ on $\N$  and  $\overline{\cal P}$ [resp. ${\cal P}$]  is an homogeneous Poisson point process on $\R_+^2$ [resp. $\R_+$] with rate $1$ and the random processes $\overline{{\cal P}}$ and ${\cal P}$ are independent .
\end{theorem}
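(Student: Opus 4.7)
The plan is a fixed-point argument on the space $\mathcal{C}([0,T],M_1(\mathbb{N}))$ endowed with the uniform total variation metric, in the spirit of the classical Sznitman approach to nonlinear martingale problems, adapted to pure-jump dynamics with bounded rates. I fix the Poisson processes $\overline{\mathcal P}$, $\mathcal P$ and the initial value $L(0)\sim\pi(0)$ once and for all, and work pathwise.

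First, for each prescribed deterministic flow $\mu\in\mathcal{C}([0,T],M_1(\mathbb{N}))$ with $\mu(0)=\pi(0)$, I would construct the unique strong solution $(L^\mu(t))$ of the \emph{linear} SDE obtained by replacing $\pi(t)$ by $\mu(t)$ in~\eqref{McKV}. Since $\Psi$ is bounded by some constant $\|\Psi\|_\infty$ by Assumption~\ref{CondA}, the upward jump intensity is at most $\beta\|\Psi\|_\infty$ and the reset intensity is $1$, so the process is a pure jump process with uniformly bounded total jump rate. It can be defined inductively between successive Poisson atoms, giving pathwise existence and uniqueness given $(\overline{\mathcal P},\mathcal P,L(0))$. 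Denote by $\pi^\mu(t)$ the distribution of $L^\mu(t)$ and define $\Phi(\mu)=\pi^\mu$; note that $\pi^\mu$ lies in $\mathcal{C}([0,T],M_1(\mathbb{N}))$ because bounded rates imply continuity in total variation.

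Second, I would prove that $\Phi$ is Lipschitz with a very tractable constant using a synchronous coupling. Given two flows $\mu,\mu'$, I construct $L^\mu$ and $L^{\mu'}$ on the same probability space with the same initial condition and the same Poisson processes. Let $\tau=\inf\{t\ge 0:L^\mu(t)\neq L^{\mu'}(t)\}$. On $\{\tau>s\}$ one has $L^\mu(s)=L^{\mu'}(s)$, so by the second Lipschitz bound in~\eqref{LipPsi} the instantaneous rate at which a Poisson atom $(s,u)$ of $\overline{\mathcal P}$ falls in the symmetric difference of the two acceptance intervals is bounded by
\[
\beta\bigl|\Psi(\mu(s),L^\mu(s))-\Psi(\mu'(s),L^{\mu'}(s))\bigr|\le \beta D_\Psi\,\|\mu(s)-\mu'(s)\|_{{\rm tv}}.
\]
Since the reset intensity is shared and preserves agreement, a standard compensation argument gives
\[
\bigl\|\pi^\mu(t)-\pi^{\mu'}(t)\bigr\|_{{\rm tv}}\le \mathbb P(L^\mu(t)\neq L^{\mu'}(t))\le \mathbb P(\tau\le t)\le \beta D_\Psi\int_0^t\|\mu(s)-\mu'(s)\|_{{\rm tv}}\,\diff s.
\]

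Iterating this inequality, $\Phi^n$ is a strict contraction on $\mathcal{C}([0,T],M_1(\mathbb{N}))$ with constant $(\beta D_\Psi T)^n/n!$, so it admits a unique fixed point $\pi(\cdot)$ by Banach's theorem; the associated $L^\pi$ is the unique solution of~\eqref{McKV} on $[0,T]$, and uniqueness of $\pi$ combined with strong pathwise uniqueness of the linear SDE yields uniqueness of the nonlinear process. Since $T$ is arbitrary, the solution is defined globally. The only genuinely delicate point is the contraction estimate: one must be careful that the coupling is set up so that before $\tau$ the two trajectories are truly identical, so that the first Lipschitz bound in~\eqref{LipPsi} involving $|l-l'|$ does not enter, and only the total variation Lipschitz bound governs decoupling — this is precisely what makes the argument work despite the fact that the state space $\mathbb{N}$ is unbounded and the process has no a priori moment bound.
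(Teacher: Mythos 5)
Your proposal is correct, and it is a more self-contained variant of the approach the paper uses, which is to invoke Theorem~2.1 of Graham (reference~\cite{Graham2}) after checking that
\[
\int_0^{\infty} \left|\ind{u\le\Psi(\sigma,\ell)}-\ind{u\le\Psi(\sigma',\ell')}\right|\diff u \;\le\; C_0\left(|\ell-\ell'|+W_1(\sigma,\sigma')\right),
\]
which follows from both bounds in~\eqref{LipPsi} together with $\|\sigma{-}\sigma'\|_{\rm tv}\le W_1(\sigma,\sigma')$. What you do differently, and what it buys: instead of citing a black-box McKean--Vlasov well-posedness theorem for nonlinear jump processes, you run the Picard iteration directly on the law flow and extract the contraction from a synchronous coupling in which the two trajectories are literally identical until the first Poisson atom that lands in the symmetric difference of the acceptance intervals. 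Because the reset mechanism (to state $0$, at the common atoms of $\mathcal P$) preserves agreement, only the $\|\cdot\|_{\rm tv}$-Lipschitz constant $D_\Psi$ is ever activated in the decoupling estimate $\P(\tau\le t)\le \beta D_\Psi\int_0^t\|\mu(s)-\mu'(s)\|_{\rm tv}\,\diff s$; the $|l{-}l'|$-Lipschitz constant $C_\Psi$, which the paper needs in order to place itself within Graham's hypotheses expressed in $W_1$-distance, plays no role in your argument. This is a genuine (if minor) weakening of the hypotheses used in the proof, made possible by the very specific downward dynamics of this model. The remaining steps — pathwise well-posedness of the linear SDE for a prescribed flow (bounded jump rates, so the process is constructed atom by atom), continuity of $t\mapsto\pi^\mu(t)$ in total variation, Banach fixed point for $\Phi^n$ and the standard argument that the fixed point of $\Phi^n$ is the fixed point of $\Phi$, and finally that pathwise uniqueness of the linear SDE transfers uniqueness from the law flow to the process — are all sound.
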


This will be referred to as the McKean{-}Vlasov process associated to this model. The associated process  $(\pi(t))$ is  a continuous $M_1(\N)$-valued function, for any function $f$ with finite support on $\N$, $\croc{\pi(t),f}=\E(f(L(t)))$ and the equation 
\[
\croc{\pi(t),f}=\croc{\pi(0),f}+\int_0^t \croc{\pi(s),\Omega_{\pi(s)}f}\,\diff s, \quad \forall t{\ge}0,
\]
holds where, for $\sigma{\in}M_1(\N)$,  the generator $\Omega_{\sigma}$ is defined by
\begin{equation}\label{Omega}
\Omega_\sigma(f)(x)=\beta\Psi(\sigma,x)\left(f(x{+}1){-}f(x)\right)+ (f(0){-}f(x)),
\end{equation}
for $x{\in}\N$. 

{
To prove the mean-field convergence under Condition~\ref{CondI}, it is convenient to introduce the random process on $M_1(N)$ satisfying the integral equation
\begin{equation}\label{FP}
\croc{\Pi(t),f}=\croc{\Pi(0),f}+\int_0^t \croc{\Pi(s),\Omega_{\Pi(s)}f}\,\diff s, \quad \forall t{\ge}0.
\end{equation}
Recall that $\Pi(0)$ is the asymptotic local empirical distribution of a node at time $0$, it is a priori random. Its distribution is therefore an element of $M_1(M_1(\N))$.

\begin{proof}[Proof of Theorem~\ref{theoMcKV}]
  The result is a direct consequence of Theorem~2.1 of Graham~\cite{Graham2}. All we have to prove is that there exists a constant $C_0$ such that, for all $\sigma$, $\sigma'{\in}M_1(\N)$ and $\ell$, $\ell'{\in}\N$, the relation
  \[
  \int_0^{+\infty} \left|\ind{u{\le}\Psi(\sigma,\ell)}-\ind{u{\le}\Psi(\sigma',\ell')}\right|\,\diff u \le C_0\left(\rule{0mm}{4mm}|\ell{-}\ell'|{+}W_1(\sigma,\sigma')\right)
  \]
holds, where $W_1$ is the Wasserstein distance on $M_1(\N)$ defined by Relation~\eqref{Wp}. 
  
From the Lipschitz condition~\eqref{LipPsi} of Assumption~\ref{CondA}, we obtain that
\begin{multline*}
    \int_0^{+\infty} \left|\ind{u{\le}\Psi(\sigma,\ell)}-\ind{u{\le}\Psi(\sigma',\ell')}\right|\,\diff u \\ =
    \left|\Psi(\sigma,\ell){-}\Psi(\sigma',\ell')\right|  \leq \max(C_\Psi,D_\psi)\left(\rule{0mm}{4mm} |\ell{-}\ell'|{+}\|\sigma{-}\sigma'\|_{{\rm tv}}\right).
\end{multline*}
The elementary inequality
$
\ind{x{\not=}y}\leq |x{-}y|
$,
when $x$ and $y$ are integers, and Relation~\eqref{TV} give the relation $\|\sigma{-}\sigma'\|_{{\rm tv}}{\le}W_1(\sigma,\sigma')$. The theorem is proved.
\end{proof}
The next result is a simple, but important, invariance result for the McKean{-}Vlasov process. 
\begin{prop}[Conservation of Mass]\label{remmass}
With the same notations as in Theorem~\ref{theoMcKV}, if the variable $L(0)$ is integrable with $\E(L(0)){=}\beta{>}0$, then $\E(L(t)){=}\beta$ for all $t{\ge}0$. 
\end{prop}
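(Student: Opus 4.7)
The plan is to derive and solve an ordinary differential equation for $m(t){\steq{def}}\E(L(t))$ from the SDE~\eqref{McKV}. Heuristically, the jumps of $(L(t))$ are of two types: at rate $\beta\Psi(\pi(t),L(t{-}))$ the process increases by $1$, and at rate $1$ the process is reset to $0$, losing the current value $L(t{-})$. Taking expectations in~\eqref{McKV}, using the martingale property associated to the two independent Poisson point processes and Fubini's theorem, one expects
\[
m(t)=m(0)+\int_0^t \left[\beta\E\left(\Psi(\pi(s),L(s))\right)-m(s)\right]\diff s.
\]
By Relation~\eqref{Psimass} applied to $\sigma{=}\pi(s)$,
$\E(\Psi(\pi(s),L(s))){=}\croc{\pi(s),\Psi(\pi(s),\cdot)}{=}1$, so $m$ satisfies $m'(t){=}\beta{-}m(t)$ with $m(0){=}\beta$, which gives $m(t){\equiv}\beta$.

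The only technical point is to justify that $m(t){<}{+}\infty$ for all $t{\ge}0$ and that expectation commutes with the stochastic integrals. I would introduce the stopping times $\tau_n{\steq{def}}\inf\{t{\ge}0:L(t){\ge}n\}$, which are a.s.\ finite approximations of ${+}\infty$ since the upward jumps are of size $1$ and occur at a rate bounded by $\beta\|\Psi\|_{\infty}$, where $\|\Psi\|_{\infty}{<}{+}\infty$ by Assumption~\ref{CondA}. Integrating~\eqref{McKV} against $\ind{s{\le}\tau_n}$, taking expectation (all terms are now bounded and the compensated Poisson integrals are genuine martingales), and dropping the non-positive contribution of the downward jumps, gives $\E(L(t{\land}\tau_n)){\le}m(0){+}\beta\|\Psi\|_\infty\,t$. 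Letting $n{\to}{+}\infty$ by monotone convergence yields $m(t){\le}m(0){+}\beta\|\Psi\|_\infty\,t{<}{+}\infty$.

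Once $m(t)$ is finite, the same truncation combined with dominated convergence, together with the boundedness of $\Psi$ and the integrability of $(L(s),s{\le}t)$ (to control the downward jump $L(s{-}){\cal P}(\diff s)$), allows us to take the limit $n{\to}{+}\infty$ in the integrated equation and to obtain the displayed identity for $m(t)$. The mass conservation condition~\eqref{Psimass} then reduces it to the linear ODE above, whose unique solution with initial value $\beta$ is the constant function $\beta$. The main obstacle is really just the bookkeeping around the exchange of expectation and integration, which is standard once the boundedness of $\Psi$ and the stopping times $\tau_n$ are in hand; the algebraic heart of the argument is the identity $\croc{\pi(s),\Psi(\pi(s),\cdot)}{=}1$ provided by~\eqref{Psimass}.
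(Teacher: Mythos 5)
Your proof follows exactly the same route as the paper's: integrate the SDE~\eqref{McKV}, take expectations, invoke the mass-conservation identity $\E(\Psi(\pi(s),L(s)))=\langle\pi(s),\Psi(\pi(s),\cdot)\rangle=1$ from Relation~\eqref{Psimass}, and solve the resulting linear integral equation $m(t)-\beta=\beta t-\int_0^t m(s)\,\diff s$ to get $m\equiv\beta$. The only difference is that you spell out the stopping-time/localization argument to justify integrability of $L(t)$ and the exchange of expectation with the stochastic integral, which the paper dismisses in one clause ("by using the fact that $\Psi$ is bounded$\ldots$ we get that $L(t)$ is integrable"); this is a legitimate and welcome bit of extra rigor, not a different method.
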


\noindent
This result can also be expressed as $\croc{\pi(t),I}{=}\beta$ for all $t{\ge}0$. Recall the $I$ is the identity function on $\N$, $I(x){=}x$ for $x{\in}\N$. 
\begin{proof}
By integrating Equation~\eqref{McKV} and by using the fact that $\Psi$ is bounded by Assumption~\ref{CondA}, we get that $L(t)$ is integrable for all $t{\ge}0$ and 
  \[
  E(L(t)){-}\beta=\beta\int_0^t \E(\Psi(\pi(s),L(s)))\,\diff s-\int_0^t\E(L(s))\,\diff s.
  \]
Relation~\eqref{Psimass} of  Assumption~\ref{CondA} gives that $\E(\Psi(\pi(s),L(s))){=}1$, for all $s{\ge}0$.  It is then easy to conclude that $(\E(L(t)))$ is constant equal to $\beta$. 
\end{proof}
In particular, under Assumption~\ref{CondI}, the solution $(\Pi(t))$  of Equation~\eqref{FP} is such that, almost surely, $\croc{\Pi(t),I}{=}\beta$ for all $t{\ge}0$. 
}

{We conclude the section with two technical results which will be used in the proof of the  mean-field convergence. The first one gives the existence of an exponential moment for the  McKean{-}Vlasov process, this is in fact a key ingredient in the proof of Theorem~\ref{theoLED} for the convergence in distribution of the local empirical distributions. 
\begin{prop}\label{propExp}
If the functional $\Psi$ satisfies Condition~\ref{CondA} of Section~\ref{subsecallocation} and   there exists some $\eta{>}0$ such that
  \[
  \E\left(\int_\N e^{\eta x}\Pi(0)(\diff x)\right){<}{+}\infty,
  \]
then, for  $T{>}0$, 
  \[
 \E\left(  \sup_{t{\le}T} \int_\N e^{\eta x}\Pi(t)(\diff x)\right)<{+}\infty,
  \]
  where $(\Pi(t))$ is the solution of the integral equation~\eqref{FP}.
\end{prop}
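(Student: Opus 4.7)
The plan is to test the integral equation~\eqref{FP} against a sequence of finitely supported approximations of $x\mapsto e^{\eta x}$, derive a pathwise Gronwall estimate uniform in the truncation, and pass to the limit.

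Concretely, I would set
\[
g_k(x)\steq{def}e^{\eta x}\ind{x\le k},
\]
which has finite support and hence is an admissible test function in~\eqref{FP}. A case analysis for $x<k$, $x=k$ and $x>k$ starting from the definition~\eqref{Omega} of $\Omega_\sigma$ would establish the pointwise inequality
\[
\Omega_\sigma g_k(x)\le a\,g_k(x)+1,\qquad x\in\N,\ \sigma\in M_1(\N),
\]
with $a\steq{def}\beta\|\Psi\|_\infty(e^\eta-1)$, where $\|\Psi\|_\infty$ is the uniform bound on $\Psi$ furnished by Assumption~\ref{CondA}. For $x<k$ this is immediate from $g_k(x+1)-g_k(x)=(e^\eta-1)g_k(x)$ and $g_k(0)-g_k(x)\le 1$. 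At the boundary $x=k$ the ``birth'' term $g_k(k+1)-g_k(k)=-e^{\eta k}$ is non-positive, so $\Omega_\sigma g_k(k)\le 1$, and for $x>k$ both $g_k(x)$ and $g_k(x+1)$ vanish so $\Omega_\sigma g_k(x)=1$.

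Integrating this pointwise inequality against $\Pi(s)$ and injecting into~\eqref{FP}, I obtain, pathwise in $\omega$,
\[
\croc{\Pi(t),g_k}\le \croc{\Pi(0),g_k}+\int_0^t\bigl(a\croc{\Pi(s),g_k}+1\bigr)\,\diff s,\qquad t\ge 0.
\]
Gronwall's lemma, followed by bounding $t$ by $T$, then yields
\[
\sup_{t\le T}\croc{\Pi(t),g_k}\le \bigl(\croc{\Pi(0),g_k}+T\bigr)e^{aT},
\]
uniformly in $k$ and in $\omega$.

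Since $g_k\uparrow e^{\eta\cdot}$, monotone convergence gives $\croc{\Pi(t),e^{\eta\cdot}}=\sup_k\croc{\Pi(t),g_k}$ for each $t$, and exchanging the two suprema,
\[
\sup_{t\le T}\croc{\Pi(t),e^{\eta\cdot}}=\sup_k\sup_{t\le T}\croc{\Pi(t),g_k}\le \bigl(\croc{\Pi(0),e^{\eta\cdot}}+T\bigr)e^{aT}.
\]
Taking expectation and invoking the hypothesis on $\croc{\Pi(0),e^{\eta\cdot}}$ then gives the claim. The only step that requires care is the verification of the pointwise bound on $\Omega_\sigma g_k$ at the truncation boundary $x=k$: with this particular cutoff the negative birth contribution has exactly the sign needed for the inequality to be preserved, so the Gronwall estimate is uniform in $k$ and the limit can be taken with no loss.
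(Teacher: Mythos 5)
Your proposal is correct, and it takes a genuinely different route from the paper.

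The paper's argument is probabilistic and pathwise: conditioning on $\Pi(0){=}\pi$, it identifies $\int e^{\eta x}\Pi(t)(\diff x)$ with $\E(e^{\eta L_\pi(t)})$, where $(L_\pi(t))$ solves the McKean{-}Vlasov SDE~\eqref{McKV}, and then uses the pathwise domination
\[
\sup_{t\le T} L_\pi(t)\le L_\pi(0)+\overline{\cal P}\left([0,T]\times[0,\beta\|\Psi\|_\infty]\right),
\]
valid because each upward jump is by $1$ and births are driven by a strip of bounded height, while downward jumps are resets to $0$. Since the Poisson count has finite exponential moments and is independent of the initial state, the claim follows at once. Your proof, by contrast, never invokes the SDE representation: you work directly with the Fokker{-}Planck equation~\eqref{FP}, test it against the finitely supported truncations $g_k{=}e^{\eta\cdot}\ind{\cdot\le k}$ (which are admissible test functions as the paper requires), verify the Lyapunov-type generator bound $\Omega_\sigma g_k\le a\,g_k+1$ with $a{=}\beta\|\Psi\|_\infty(e^\eta{-}1)$ — the decisive point being that the boundary birth term $g_k(k{+}1){-}g_k(k)$ is non-positive for this cutoff — and then close with a pathwise Gronwall estimate uniform in $k$ followed by monotone convergence. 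Both proofs are sound and give the same exponential-in-$T$ growth constant $e^{\beta\|\Psi\|_\infty(e^\eta-1)T}$; yours carries a harmless additive $T$ from the return-to-zero term $1$ in the generator bound, whereas the paper's bound is multiplicative only. The paper's argument is shorter given that the SDE representation is already in hand; your analytic argument exposes the Lyapunov structure of the generator and would transfer more readily to settings where one only has the nonlinear Kolmogorov equation and not an explicit Poisson-driven particle representation.
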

\begin{proof}
With a convenient probability space, for $\pi{\in}M_1(\N)$,  we denote by $(L_\pi(t))$ the solution of the SDE~\eqref{McKV} with initial distribution $\pi$. 
From the SDE, we get  the following inequality, for all $t{\le}T$,
\[
    \sup_{t\le T}  L_\pi(t)\leq L_\pi(0){+}\overline{\cal P}\left([0,T]{\times}[0,\beta\|\Psi\|_{\infty}]\right)\\
\]
By applying this estimate we get, for $\eta{>}0$, almost surely, for the distribution of $\Pi(0)$ in $M_1(M_1(\N))$, for $\pi{\in}M_1(\N)$,
\begin{multline*}  
  \left. \E\left( \sup_{t{\le}T} \int_\N e^{\eta x}\Pi(t)(\diff x)\right|\Pi(0){=}\pi\right)
  =\sup_{t{\le}T} \E\left(e^{\eta L_\pi(t)}\right)\\
  \leq e^{\eta L_{\pi}(0)} \E\left(e^{\eta\overline{\cal P}\left([0,T]{\times}[0,\beta\|\Psi\|_{\infty}]\right)}\right)
  {=}\int_\N e^{\eta x}\Pi(0)(\diff x)\E\left(e^{\eta\overline{\cal P}\left([0,T]{\times}[0,\beta\|\Psi\|_{\infty}]\right)}\right)
\end{multline*}
We conclude by integrating the inequality with respect to the distribution of $\Pi(0)$. 
\end{proof}
}
{The following simple lemma is used in the proof of Proposition~\ref{propI} to derive a Gr\"onwall-like inequality for the distance between the empirical distribution and the McKean{-}Vlasov process.}
\begin{lemma}\label{LemOm}
If $\Omega_.$ is the operator defined by Relation~\eqref{Omega}, then under Assumption~\ref{CondA} of Section~\ref{subsecallocation},   for any function $f$ on $\N$ with $\|\nabla_1(f)\|_{\infty}{<}{+}\infty$,   the relation
  \[
  \left|\croc{\sigma,\Omega_{\sigma}(f)}{-}\croc{\sigma',\Omega_{\sigma'}(f)}\right|\leq 2\beta\|\nabla_1(f)\|_{\infty} \left[\|\Psi\|_\infty{+}D_\Psi\right]\|\sigma{-}\sigma'\|_{{\rm tv}}{+}|\croc{\sigma{-}\sigma',f}|
  \]
  holds for any $\sigma$, $\sigma'{\in}M_1(\N)$ such  that $f$ is integrable with respect to $\sigma$ and $\sigma'$. 
\end{lemma}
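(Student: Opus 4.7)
The plan is a direct computation: expand both inner products using the definition of $\Omega_\sigma$, then bound the resulting difference term by term.

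Write
\[
\croc{\sigma,\Omega_{\sigma}(f)}=\beta\int_\N \Psi(\sigma,x)\,\nabla_1(f)(x)\,\sigma(\diff x)+f(0)-\croc{\sigma,f},
\]
and the analogous identity for $\sigma'$. Subtracting, the $f(0)$ terms cancel and the difference $\croc{\sigma,f}-\croc{\sigma',f}$ produces exactly the last term $|\croc{\sigma-\sigma',f}|$ of the stated bound. So it suffices to control
\[
A\steq{def}\int_\N\Psi(\sigma,x)\nabla_1(f)(x)\,\sigma(\diff x)-\int_\N\Psi(\sigma',x)\nabla_1(f)(x)\,\sigma'(\diff x).
\]

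Then I would apply the standard add-and-subtract trick, writing $A=A_1+A_2$ with
\[
A_1=\int_\N\bigl(\Psi(\sigma,x)-\Psi(\sigma',x)\bigr)\nabla_1(f)(x)\,\sigma(\diff x),\qquad
A_2=\int_\N\Psi(\sigma',x)\nabla_1(f)(x)\,(\sigma-\sigma')(\diff x).
\]
For $A_1$, the second Lipschitz bound of~\eqref{LipPsi} gives $|\Psi(\sigma,x)-\Psi(\sigma',x)|\le D_\Psi\|\sigma-\sigma'\|_{\mathrm{tv}}$ uniformly in $x$, and integrating against $\sigma$ of total mass $1$ yields $|A_1|\le D_\Psi\|\nabla_1(f)\|_\infty\|\sigma-\sigma'\|_{\mathrm{tv}}$. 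For $A_2$, the integrand $g(x)\steq{def}\Psi(\sigma',x)\nabla_1(f)(x)$ is bounded by $\|\Psi\|_\infty\|\nabla_1(f)\|_\infty$; decomposing $g$ into its positive and negative parts and using the indicator-function characterisation in~\eqref{TV} gives the standard dual bound $|\croc{\sigma-\sigma',g}|\le 2\|g\|_\infty\|\sigma-\sigma'\|_{\mathrm{tv}}$, hence $|A_2|\le 2\|\Psi\|_\infty\|\nabla_1(f)\|_\infty\|\sigma-\sigma'\|_{\mathrm{tv}}$.

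Adding the two contributions produces $|A|\le(D_\Psi+2\|\Psi\|_\infty)\|\nabla_1(f)\|_\infty\|\sigma-\sigma'\|_{\mathrm{tv}}\le 2(\|\Psi\|_\infty+D_\Psi)\|\nabla_1(f)\|_\infty\|\sigma-\sigma'\|_{\mathrm{tv}}$, and combining with the $\croc{\sigma-\sigma',f}$ term from the first step gives the claimed inequality after multiplication by $\beta$. There is no real obstacle here; the only mild subtlety is remembering the factor $2$ in the dual bound for the total variation norm as defined in~\eqref{TV} (where test functions take values in $\{0,1\}$ rather than $[-1,1]$), but since the statement of the lemma already builds in a factor $2$ this is exactly what is needed.
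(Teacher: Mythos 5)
Your proof is correct and follows essentially the same route as the paper's: the paper's one-line proof is the decomposition $\croc{\sigma,\Omega_{\sigma}(f)}-\croc{\sigma',\Omega_{\sigma'}(f)}=\croc{\sigma,\Omega_{\sigma}(f)-\Omega_{\sigma'}(f)}+\croc{\sigma-\sigma',\Omega_{\sigma'}(f)}$ followed by the Lipschitz bound~\eqref{LipPsi}, which, once expanded, is exactly your $\beta A_1$ and $\beta A_2-\croc{\sigma-\sigma',f}$ (the $f(0)$ term cancels because both measures have unit mass). You have simply written out the details, including the correct factor of $2$ coming from the $\{0,1\}$-normalisation of the total variation distance in~\eqref{TV}.
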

\begin{proof}
  This is a simple consequence of the inequality
\[
\left|\croc{\sigma,\Omega_{\sigma}(f)}{-}\croc{\sigma',\Omega_{\sigma'}(f)}\right|
\leq | \croc{\sigma,\Omega_{\sigma}(f){-}\Omega_{\sigma'}(f)}|+
| \croc{\sigma{-}\sigma',\Omega_{\sigma'}(f)}|
\]
and Relation~\eqref{LipPsi}.
\end{proof}

\section{Mean-Field Convergence Results}\label{MFsec}
{
We establish the main convergence results, namely that under Assumptions~\ref{CondT}, \ref{CondA} and~\ref{CondI},  the process of the local empirical distribution at a node is converging in distribution to the solution of the Fokker-Planck equation~\eqref{FP}.  It is also shown that the same result holds for the (global) empirical distribution.  The important result of this section is the following theorem. }
\begin{theorem}[Convergence of Local Empirical Distributions]\label{theoLED}
If Assumptions~\ref{CondT}, \ref{CondA} and~\ref{CondI} hold, then, for any $1{\le}i{\le}N$, the local empirical distribution process at node~$i$, $(\Lambda^N_i(t))$,
  \[
(\Lambda^N_i(t))=\left(\frac{1}{\card({\cal H}^N(i))}\sum_{j\in{\cal H}^N(i)} \delta_{L^N_j(t)}\right)
  \]
  is converging to $(\Pi(t))$, the solution of the integral equation~\eqref{FP}, in the following sense
\begin{equation}\label{CVLE}
  \lim_{N\to\infty}\E\left(\sup_{s\le t}\|\Lambda_i^N(s){-}\Pi(s)\|_{{\rm tv}}+\sup_{s\le t}|\langle\Lambda_i^N(s){-}\Pi(s),I\rangle|\right)=0.
\end{equation}
In particular the process $(\Lambda^N_i(t))$ converges to $(\Pi(t))$ for the Wasserstein metric $\overline{{\cal W}}_T$ defined by Relation~\eqref{Wass2}.
\end{theorem}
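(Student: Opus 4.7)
The plan is to exploit the translation invariance built into Assumption~\ref{CondI} to reduce to $i{=}1$, then to compensate the Poisson processes in the SDE~\eqref{SDE2}, show that the resulting martingale and the multinomial-expansion remainders are negligible, and finally to compare the drift with the integral equation~\eqref{FP} via Lemma~\ref{LemOm} and a Gr\"onwall argument.

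Concretely, for a bounded $f$ on $\N$, I would use Proposition~\ref{MPPMart} to compensate both sums in~\eqref{SDE2}. The ``death'' part yields $\int_0^t \croc{\Lambda_1^N(s),f(0){-}f}\,\diff s$ plus a martingale with negligible bracket. For the ``interaction'' part I would compute the compensator using the multinomial law~\eqref{Mij}: to leading order, the contribution of a pair $(j,k)$ with $j{\in}{\cal A}^N(1)$, $k{\in}{\cal H}^N(1){\cap}{\cal H}^N(j)$ is $L_j^N(s)p_{jk}^N(L^N(s))[f(L_k^N(s){+}1){-}f(L_k^N(s))]$, with a quadratic remainder of order $(L_j^N)^2 (p_{jk}^N)^2 \|f\|_\infty$ coming from two balls being routed to the same urn. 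Summing over the relevant pairs, substituting $h_N p_{jk}^N \approx \Psi(\Lambda_j^N,L_k^N)$ via Assumption~\ref{CondA}~\eqref{PPsi}, and carrying out a local averaging step (exploiting that $\Lambda_j^N$ and $\Lambda_1^N$ coincide on the large intersection ${\cal H}^N(1){\cap}{\cal H}^N(j)$) rearranges this main drift into $\int_0^t \croc{\Lambda_1^N(s),\Omega_{\Lambda_1^N(s)}(f)}\,\diff s$, where $\Omega$ is the generator defined in~\eqref{Omega}.

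The hard part will be the control of the quadratic remainder: this is precisely where the unbounded-jump feature mentioned in the introduction manifests itself, since the factor $L_j^N/h_N$ is not a priori small. I would control it uniformly via the exponential moment estimate of Proposition~\ref{propExp} together with Assumption~\ref{CondI}~\eqref{expInit}, which yield a uniform bound on $\E((L_j^N(s))^2)$ on $[0,T]$, so that the cumulative remainder is $O(1/h_N)$. The martingale piece is simpler: Proposition~\ref{MPPMart} gives a bracket of order $\card({\cal A}^N(1))/h_N^2$, which tends to zero by Assumption~\ref{CondT}, and Doob's $L^2$ inequality converts this into a uniform supremum estimate.

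With the resulting identity
\[
\croc{\Lambda_1^N(t),f} = \croc{\Lambda_1^N(0),f} + \int_0^t \croc{\Lambda_1^N(s),\Omega_{\Lambda_1^N(s)}(f)}\,\diff s + \eps_N(t,f),
\]
and $\E(\sup_{t\le T}|\eps_N(t,f)|){\to}0$ uniformly for $f$ with $\|f\|_\infty, \|\nabla_1 f\|_\infty{\le}1$, I would subtract~\eqref{FP}, apply Lemma~\ref{LemOm}, take the supremum over $f{:}\N{\to}\{0,1\}$ via a countable dense family, and close with Gr\"onwall's lemma using the initial convergence~\eqref{tvInit}; this produces the total-variation part of~\eqref{CVLE}. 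For the unbounded identity $I(x){=}x$ in the second piece of~\eqref{CVLE}, the truncation $I = (I{\land}K) + (I{-}K)^+$ reduces to the bounded-Lipschitz case for $I{\land}K$, while the tail $(I{-}K)^+$ is made small uniformly in $N$ and $s{\le}T$ by the exponential moment estimates from Proposition~\ref{propExp} and~\eqref{expInit}. Letting $N{\to}{+}\infty$ first and then $K{\to}{+}\infty$ yields the second term in~\eqref{CVLE}; convergence for $\overline{{\cal W}}_T$ then follows from the coupling form~\eqref{Wass2} of the Wasserstein distance.
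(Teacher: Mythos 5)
Your high-level plan (compensate the Poisson integrals in~\eqref{SDE2}, show the martingale and multinomial remainders vanish, and compare with~\eqref{FP} via Gr\"onwall) does match the paper's starting point, but the crux of the proof is in a step you treat as automatic. The assertion that a ``local averaging step, exploiting that $\Lambda_j^N$ and $\Lambda_1^N$ coincide on the large intersection ${\cal H}^N(1)\cap{\cal H}^N(j)$,'' turns the drift into $\int_0^t\croc{\Lambda_1^N(s),\Omega_{\Lambda_1^N(s)}(f)}\diff s$ plus a vanishing error is not justified: for $j{\in}{\cal A}^N(1)$ at the edge of the interaction set the overlap may be a single node, and even a large overlap does not, by itself, make $\Lambda_j^N$ close to $\Lambda_1^N$. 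What actually remains after the rearrangement are the two terms the paper calls $X^N_{f,1}$ (Relation~\eqref{eqA}, driven by $\croc{\Lambda_j^N{-}\Pi,I}$) and $Y^N_{f,1}$ (Relation~\eqref{eqB}, carrying the factor $L_k^N(s)/h_N$ times $\Psi(\Lambda_k^N,\cdot){-}\Psi(\Lambda_1^N,\cdot)$). These are not negligible; they are of the same order as the quantity being bounded and must be fed back into a self-consistent Gr\"onwall estimate through the triangle inequality via $\Pi(s)$, as in Proposition~\ref{propAB}. The unbounded weight $L_k^N/h_N$ --- the very feature the introduction flags as the main difficulty --- forces the truncation $L_k^N\le K+L_k^N\ind{L_k^N\ge K}$, and the Gr\"onwall constant then grows linearly in $K$.

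Because the Gr\"onwall constant is $O(K)$, the argument does not close in one pass: after Gr\"onwall you get a bound like $\overline d(t)\le C_0TK\,\E(\sup_{s\le T}\croc{\Pi(s),I_K})\,e^{C_0Kt}$, in which the $e^{C_0Kt}$ blow-up competes against the $e^{-\eta K}$ decay from Proposition~\ref{propExp}, and the limit $K{\to}\infty$ only gives zero when $t{<}\eta/C_0$. The paper therefore proves convergence on a short interval and iterates by time shift; your plan of ``Gr\"onwall, then $N{\to}\infty$, then $K{\to}\infty$'' misses this bootstrap entirely. Likewise, the TV part and the $\croc{\cdot,I}$ part of~\eqref{CVLE} cannot be established one after the other as you propose, because the $Y$-estimate already couples them; the paper controls $d^N{=}d_1^N{+}d_2^N$ jointly (Proposition~\ref{propI}). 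Two further slips: the uniform-in-$N$ bound on $\E(L_j^N(s)^2)$ for $s{>}0$ does not follow from Proposition~\ref{propExp} (which concerns $\Pi$, not the $N$-particle system) nor from~\eqref{expInit} (which is only at $t{=}0$) --- it needs its own Gr\"onwall argument, Lemma~\ref{lem2}; and the tail $(I{-}K)^+$ cannot be dispatched via exponential moments of $\Lambda_1^N(s)$, $s{>}0$, since no such bound is available --- the paper instead writes $\E(\croc{\Lambda_1^N(s),I_K})\le 2K\E(\|\Lambda_1^N(s){-}\Pi(s)\|_{\rm tv})+\E(|\croc{\Lambda_1^N(s){-}\Pi(s),I}|)+\E(\croc{\Pi(s),I_K})$ and closes the loop through the Gr\"onwall quantity itself.
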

{
  The statement of Assumption~\ref{CondT} is in Section~\ref{subsecgraph}, that of~\ref{CondA}   is in Section~\ref{subsecallocation} and for Assumption~\ref{CondI}  it is in Section~\ref{SecEvol}.

\medskip

The proof of this result is quite technical. 
The  second term within the expected value of Relation~\eqref{CVLE} is  related to a uniform $L_1$-convergence on finite time intervals  of the  local mean on the neighboring nodes of node $i$.  This term plays an important role in the convergence result as it will be seen. When $\Pi(0)$ of Assumption~\ref{CondI} is deterministic,  the process $(\Pi(t))$ is deterministic and from Proposition~\ref{remmass} of Section~\ref{SecEvol},  we have $\langle\Pi(t),I\rangle{=}\beta$, for all $t{\ge}0$. The theorem shows in particular that the limit of the process of the local average of the load in a given neighborhood is constant and equal to $\beta$.  The total average, i.e.\ when all nodes are considered, is deterministic and equal to $F_N/N$, its convergence is just the scaling assumption~\eqref{Scaling}. 

The general strategy to establish the mean-field convergence  is to use the system of evolution equations~\eqref{SDE2} and decompose it in a convenient way with the help of stochastic calculus for Poisson process and with various  estimates. This is done by proving successively technical results: Lemma~\ref{lem2} for the boundedness of the second moments of the number of balls in an urn, Lemma~\ref{lemC} to show that an urn receives at most one ball at each event on any finite time interval, and Lemma~\ref{lemM} to prove that the martingales vanish in the limit.  A Gr\"onwall's Inequality for the distance to the McKean{-}Vlasov process is established with  Propositions~\ref{propAB} and~\ref{propI}.  The theorem is then proved. The mean-field convergence, i.e.\ the convergence in distribution of the empirical distribution, is established in Theorem~\ref{theoED}. }

\medskip

We begin by recalling and  introducing some notations which will be used throughout this section.
\begin{itemize}
\item As before, if $f$ is some function on $\N$, for $y{\in}\N$, $\nabla_y$ is the {discrete gradient} operator defined by,  for $x{\in}\N$, $$\nabla_y(f)(x){\steq{def}}f(x{+}y){-}f(x).$$
\item The set of $1${-}Lipschitz functions on $N$ is denoted as ${\rm Lip}(1)$,
  \[
  {\rm Lip}(1)=\{f{:}\N{\to}\R: \|\nabla_1(f)\|_\infty{\le}1\},
  \]
  and, for $K{\ge}0$,
  \begin{equation}\label{IIK}
    I_K(x){\steq{def}}x\ind{x{\ge}K} \quad \text{\rm  and,  as before, } I(x){\steq{def}}x.
  \end{equation}
\end{itemize}
Relation~\eqref{SDE2} is rewritten by compensating the stochastic integrals with respect to the Poisson processes then, by using Proposition~\ref{MPPMart}, one gets that, for a bounded function $f$  on $\N$ and $t{\ge}0$,
\begin{multline}\label{SDE3}
\croc{\Lambda^N_i(t),f}=
\croc{\Lambda^N_i(0),f}+\int_0^t \croc{\Lambda^N_i(s),f(0){-}f(\cdot)} \diff s
\\{+} \sum_{z\ge 1}  \int_0^t\sum_{j{\in}{\cal H}^N(i)}\frac{1}{h_N}\quad\nabla_{z}(f)\left(L_j^N(s)\right)\left(\sum_{k{\in}{\cal H}^N(j)}B_{kj}(L^N(s))(\diff z)\right) \diff s+M_{f,i}^N(t),
\end{multline}
$B_{kj}(\cdot)$ is the binomial distribution defined by Relation~\eqref{Bij} and $(M_{f,i}^N(t))$ is a martingale whose previsible increasing process $(\langle M_{f,i}^N(t)\rangle)$ is given, via some simple calculations, by
\begin{multline}\label{crocM}
\croc{M_{f,i}^N(t)}=
\frac{1}{h_N^2}\sum_{j{\in}{\cal A}^N(i)} \int_{0}^t\int_{(z_i)\in\N^N}\left(\rule{0mm}{7mm}\left[f(0){-}f(L_j^N(s))\right]\ind{j\in{\cal H}^N(i)}\right.\\\left. {+}\sum_{{k{\in}{\cal H}^N(i)\cap{\cal H}^N(j)}} \nabla_{z_k}(f)(L_k^N(s))  \right)^2B_j(L^N(s))(\diff z_1,\ldots, \diff z_N)\diff s.
\end{multline}
where ${\cal A}^N(i)$ is defined by Relation~\eqref{range} and $B_{j}(\cdot)$ is the multinomial distribution defined by Relation~\eqref{Mij}.

We introduce the potential asymptotic process of Theorem~\ref{theoMcKV}  in this relation. A careful  (and somewhat cumbersome) rewriting of Relation~\eqref{SDE3} gives the identity
\begin{multline}\label{SDE4}
\croc{\Lambda^N_i(t),f}=
\croc{\Lambda^N_i(0),f}+\int_0^t \croc{\Lambda^N_i(s),\Omega_{\Lambda^N_i(s)}(f)} \diff s\\
+X_{f,i}^N(t)+Y_{f,i}^N(t)+Z_{f,i}^N(t)+M_{f,i}^N(t),
\end{multline}
where $\Omega_.$ is the operator defined by Relation~\eqref{Omega} and $I$ is the identity function. The other terms are
\begin{multline}\label{eqA}
X_{f,i}^N(t){\steq{def}}\\ \frac{1}{h_N}\int_0^t\quad \sum_{\mathclap{j\in{\cal H}^N(i)}} \nabla_1(f)(L_j^N(s))\croc{\Lambda_{j}^N(s){-}\Pi(s),I} \Psi\left(\Lambda_i^N(s),L_j^N(s)\right)\,\diff s,
\end{multline}
where $(\Pi(s))$ is defined by Equation~\eqref{FP},
\begin{multline}\label{eqB}
  Y_{f,i}^N(t)\steq{def}\\
  \frac{1}{h_N}\int_0^t\;\sum_{\mathclap{\substack{j\in{\cal H}^N(i)\\k\in{\cal H}^N(j)}}} \nabla_1(f)(L_j^N(s))\frac{L_k^N(s)}{h_N}\left(\rule{0mm}{4mm}\Psi\left(\Lambda_k^N(s),L_j^N(s)\right){-}\Psi\left(\Lambda_i^N(s),L_j^N(s)\right)\right)\diff s.
\end{multline}
 Note that the almost sure relation $\croc{\Pi(s),I}{=}\beta$, $s{\ge}0$ has been  used in this derivation.
The term $L_k^N(s)/h_N$ in the expression of $(  Y_{f,i}^N(t))$ is the main source of difficulty to prove the mean-field  convergence. When the sequence $(h_N)$ grows linearly with $N$ then, since $|L_k^N|{\leq}F_N{\sim}\beta N$, this term is bounded and the usual contraction methods, via Gr\"onwall's Inequality, can be used without too much difficulty.  A more careful approach has to be considered if the growth of $(h_N)$ is sublinear. 
Finally, 
\begin{multline}\label{eqC}
  Z_{f,i}^N(t){\steq{def}}
  \int_0^t\sum_{j{\in}{\cal H}^N(i)}\hspace{-3mm}\frac{1}{h_N}\sum_{z{\ge}2}\nabla_{z}(f)\left(L_j^N(s)\right)\left(\sum_{k{\in}{\cal H}^N(j)}B_{kj}(L^N(s))(\diff z)\right) \diff s\\{+}
 \int_0^t\sum_{\mathclap{\substack{j{\in}{\cal H}^N(i)\\k{\in}{\cal H}^N(j)}}}\frac{1}{h_N}\nabla_{1}(f)\left(L_j^N(s)\right)
 L^N_k(s)p^N_{kj}\left(L^N(s)\right)\left(\left(1{-}p^N_{kj}\left(L^N(s)\right)\right)^{L^N_k(s)-1}\hspace{-3mm}{-}1\right) \diff s\\
+ \int_0^t\sum_{\mathclap{\substack{j{\in}{\cal H}^N(i)\\k{\in}{\cal H}^N(j)}}}\frac{1}{h_N}\quad\nabla_{1}(f)\left(L_j^N(s)\right)\frac{L^N_k(s)}{h_N}\left(h_Np^N_{kj}\left(L^N(s)\right){-}\Psi\left(\Lambda^N_k(s),L^N_j(s)\right)\right)\diff s
\end{multline}
where $(B_{kj}(\ell))$ are the binomial distributions  defined by Relation~\eqref{Bij}.

We first consider the last four  terms of Relation~\eqref{SDE4} via  three  technical lemmas. 
\begin{lemma}\label{lem2}
Under Assumptions~\ref{CondA} and~\ref{CondI} of Section~\ref{SecEvol}, for any $T{\ge}0$, there exists a finite constant $C_T$ such that
  \[
  \sup_{N\ge 1}\sup_{t\le T} \E\left(L^N_1(t)^2\right) \leq C_T.
  \]
\end{lemma}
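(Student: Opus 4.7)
The plan is to derive a linear Gronwall-type inequality for $V_N(t){:=}\E(L_1^N(t)^2)$ with constants independent of $N$ and conclude. Throughout I use two simple consequences of the hypotheses. First, Assumption~[A] together with $\Psi$ bounded yields a constant $C$ such that $p_{ij}^N(\ell){\le}C/h_N$ uniformly in $N$, $i$, $j$, $\ell$. Second, the translation invariance of Assumption~[I].1 is preserved by the dynamics~\eqref{SDE} (since both the graph and the allocation law are translation invariant), so $\E(L_j^N(s)){=}F_N/N$ and $\E(L_j^N(s)^2){=}V_N(s)$ for every $j$, and Cauchy--Schwarz gives $\E(L_i^N(s)L_j^N(s)){\le}V_N(s)$ for every pair $(i,j)$.

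Next I apply Ito's formula to $L_1^N(t)^2$ through the SDE~\eqref{SDE}, appeal to Proposition~\ref{MPPMart} to compensate the Poisson integrals, and take expectation to kill the martingale. Using that conditionally on $L^N(s)$, the jump $Z_{j1}(U,L^N(s))$ has distribution $\mathrm{Binomial}(L_j^N(s),p_{j1}^N(L^N(s)))$ (Relation~\eqref{Bij}), so that
\[
\E(Z_{j1}\mid L^N)=L_j^Np_{j1}^N,\qquad \E(Z_{j1}^2\mid L^N)\le L_j^Np_{j1}^N+(L_j^N)^2(p_{j1}^N)^2,
\]
one obtains
\[
V_N(t)=V_N(0)+\int_0^t\E\Bigl[\sum_{j\in{\cal H}^N(1)}\bigl(2L_1^NL_j^Np_{j1}^N+L_j^Np_{j1}^N+(L_j^N)^2(p_{j1}^N)^2\bigr)-(L_1^N)^2\Bigr]\,\diff s.
\]

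Combining $p_{j1}^N{\le}C/h_N$ with the translation invariance and Cauchy--Schwarz, the three nonnegative contributions are bounded, respectively, by $2CV_N(s)$ (from $\E[L_1^NL_j^N]{\le}V_N(s)$ summed over the $h_N$ neighbors of $1$), by $C(F_N/N)$ (from $\E(L_j^N){=}F_N/N$), and by $(C^2/h_N)V_N(s){\le}C^2V_N(s)$. Collecting, there exist constants $\alpha$ and $\gamma$ depending only on $C$ and $\beta$ such that
\[
V_N(t)\le V_N(0)+\gamma t+\alpha\int_0^tV_N(s)\,\diff s,\qquad t\le T.
\]
The exponential moment condition~\eqref{expInit} of Assumption~[I].3 implies $\sup_N V_N(0){<}{+}\infty$ (since $x^2{\le}C_\eta e^{\eta x}$), and Gronwall's lemma then yields the claimed uniform bound $V_N(t){\le}C_T$ on $[0,T]$.

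The only mildly delicate point is the cross term $\E[L_1^NL_j^N]$, which is not controlled by marginal expectations alone; it is handled cleanly by Cauchy--Schwarz plus the translation invariance. The remaining two terms are straightforward, as is the zero-time bound, once Assumption~[A] is used to absorb all $p_{ij}^N$ factors into the uniform bound $C/h_N$.
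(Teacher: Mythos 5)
Your proof is correct and follows essentially the same route as the paper's: both start from the SDE~\eqref{SDE} and Proposition~\ref{MPPMart}, bound the binomial first and second moments using the uniform estimate $p_{ij}^N\le C/h_N$ coming from Assumption~\ref{CondA}, exploit translation invariance together with Cauchy--Schwarz for the cross term, and close with Gr\"onwall. One cosmetic remark: after replacing $\E(Z_{j1}^2\mid L^N)$ by the upper bound $L_j^Np_{j1}^N+(L_j^N)^2(p_{j1}^N)^2$, the displayed relation for $V_N(t)$ should be an inequality ``$\le$'' rather than an equality, but this does not affect the conclusion.
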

\begin{proof}
Assumption~\ref{CondI} shows that the quantity
    \[
 q_0{\steq{def}} \sup_{N\geq 1}\E\left(L^N_1(0)^2\right)
 \]
 is finite.  From  Relation~\eqref{PPsi} and the boundedness of the functional $\Psi$, we get  the existence of a constant $C_0$ and $N_0{\in}\N$, such that, for $N{\ge}N_0$, the relation
  \begin{equation}\label{Binom+}
    \sup_{i\in{\cal H}^N,\ell\in{\cal S}_N} p^N_i(\ell)\leq \frac{C_0}{h_N}<1
  \end{equation}
holds. Proposition~\ref{MPPMart} and Relation~\eqref{SDE} give the identity
\begin{multline}\label{eqa1}
  \E\left(L^N_1(t)^2\right){=}  \E\left(L^N_1(0)^2\right)\\{+} \sum_{\mathclap{j{\in}{\cal H}^N}}\int_0^{\mathclap{t}}\E\left(\int_\N\left(2L^N_1(s)z{+}z^2\right)B_{j1(L^N(s))}(\diff z)\right)\diff s
  {-}\int_0^{\mathclap{t}}   \E\left(L^N_1(s)^2\right)\diff s.
\end{multline}
For $s{\ge}0$, by using Relation~\eqref{Binom+} and the symmetry of the model, we get 
\begin{multline*}
  \E\left(L^N_1(s) \int_\N z B_{j1(L^N(s))}(\diff z)\right)=\E\left(\rule{0mm}{4mm}L^N_1(s)L^N_j(s)p^N_{j1}\left(L^N(s)\right) \right)\\
  \leq \frac{C_0}{h_N}\E\left(\rule{0mm}{4mm}L^N_1(s)L^N_j(s) \right)   \leq \frac{C_0}{h_N}\E\left(\rule{0mm}{4mm}L^N_1(s)^2\right),
\end{multline*}
by Cauchy-Shwartz's Inequality. Similarly, by using the expression of the second moment of a binomial variable, 
\begin{multline*}
  \E\left(\int_\N z^2 B_{j1(L^N(s))}(\diff z)\right)\leq \E\left(\rule{0mm}{4mm}L^N_1(s)^2p^N_{j1}\left(L^N(s)\right)^2 \right){+}\E\left(\rule{0mm}{4mm}L^N_1(s)p^N_{j1}\left(L^N(s)\right) \right)\\
  \leq \frac{2C_0}{h_N}\E\left(\rule{0mm}{4mm}L^N_1(s)^2\right).
\end{multline*}
By plugging these estimates in Equation~\eqref{eqa1},  we obtain the following inequality, for all $N{\ge}1$,
\[
  \E\left(L^N_1(t)^2\right)\leq  q_0{+}(4C_0{-}1)\int_0^t\E\left(L^N_1(s)^2\right)\,\diff s.
  \]
A straightforward use of Gr\"onwall's Inequality gives then directly the estimation since the constants $q_0$ and $C_0$ do not depend on $N$. The lemma is proved. 
\end{proof}
\begin{lemma}\label{lemC}
Under Assumptions~\ref{CondT} of Section~\ref{subsecgraph}, \ref{CondA} of Section~\ref{subsecallocation} and~\ref{CondI} of Section~\ref{SecEvol}, if $(Z_{f,i}^N(t))$ is the process defined by Relation~\eqref{eqC},  for $T{\ge}0$,
  \[
  \|Z^N\|_T{\steq{def}}\E\left(\sup_{t\leq T}\sup_{f\in{\rm Lip}(1)} \left| Z_{f,i}^N(t)\right|\right)
  \]
  then the sequence $(\|Z^N\|_T)$ converges to $0$. 
\end{lemma}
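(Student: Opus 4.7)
The plan is to bound each of the three summands comprising $Z^N_{f,i}$ in~\eqref{eqC} separately, producing majorants independent of $f\in\mathrm{Lip}(1)$ whose time integrals have expectation vanishing with $N$. Since $f\in\mathrm{Lip}(1)$ gives $|\nabla_z(f)|\le z$ and $|\nabla_1(f)|\le 1$, the moduli of the three integrands are controlled by nonnegative quantities depending only on $L^N(s)$; hence $\sup_{t\le T}\sup_{f\in\mathrm{Lip}(1)}|Z^N_{f,i}(t)|$ is dominated by the integral on $[0,T]$ of their sum, and the conclusion $\|Z^N\|_T\to 0$ follows once each summand's contribution is shown to be $o(1)$ in expectation via translation invariance~\eqref{InvTrans} and the uniform second-moment estimate of Lemma~\ref{lem2}.

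For the first summand, the elementary inequality $z\,\ind{z\ge 2}\le z(z{-}1)$ yields, for $Y\sim\mathrm{Bin}(n,p)$, $\sum_{z\ge 2}z\,\P(Y{=}z)\le n(n{-}1)p^2$. Combined with $p^N_{kj}\le C_0/h_N$ from~\eqref{Binom+} and $n=L^N_k(s)$, this bounds the integrand by $\frac{C_0^2}{h_N^3}\sum_{j\in{\cal H}^N(i)}\sum_{k\in{\cal H}^N(j)}L^N_k(s)^2$, whose expectation by~\eqref{InvTrans} and Lemma~\ref{lem2} is at most $C_T C_0^2/h_N\to 0$. The second summand is treated identically using the Bernoulli inequality $|1-(1-p)^{n-1}|\le (n{-}1)p$, yielding the same order bound $C_T C_0^2/h_N$. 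For the third summand, Assumption~\ref{CondA}\eqref{PPsi} provides a deterministic rate $\varepsilon_N{\steq{def}}\sup_{i,\ell}|h_Np^N_i(\ell){-}\Psi(\cdot,\ell_i)|\to 0$, so the integrand is dominated by $(\varepsilon_N/h_N^2)\sum_{j\in{\cal H}^N(i)}\sum_{k\in{\cal H}^N(j)}L^N_k(s)$, whose expectation on $[0,T]$ equals $\varepsilon_N\E(L^N_1(s))=\varepsilon_N F_N/N\to 0$ by invariance and Assumption~\ref{CondI}.

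The principal bookkeeping difficulty is that the double sum $\sum_{j\in{\cal H}^N(i)}\sum_{k\in{\cal H}^N(j)}$ generates $O(h_N^2)$ terms, so each summand must gain an extra factor of order $1/h_N$ beyond the $1/h_N$ prefactor already present in~\eqref{eqC}. For the first two summands this gain comes from the quadratic smallness $(p^N_{kj})^2\le C_0^2/h_N^2$---morally, the probability that two balls from a given neighbor land in a given urn is of order $1/h_N^2$---while for the third summand the additional $1/h_N$ factor already displayed in~\eqref{eqC} absorbs the double sum and the uniform convergence rate $\varepsilon_N$ closes the argument. Beyond this accounting I anticipate no conceptual obstruction: multiplying the three bounds by $T$ and summing delivers $\|Z^N\|_T\to 0$.
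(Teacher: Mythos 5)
Your proof is correct and follows essentially the same route as the paper: decompose $Z^N_{f,i}$ into its three constituent integrals, dominate the first two via a binomial second-moment estimate of order $(\ell p)^2$ together with $p^N_{kj}\le C_0/h_N$, dominate the third via the uniform rate $\varepsilon_N$ from Assumption~\ref{CondA}, and invoke translation invariance together with Lemma~\ref{lem2} to close. The only cosmetic difference is that the paper derives the binomial bound as $\E(\overline{B}(\ell)\ind{\overline{B}(\ell)\ge 2})=\ell p_0(1-(1-p_0)^{\ell-1})\le(\ell p_0)^2$ whereas you use $z\ind{z\ge2}\le z(z-1)$ to reach $\E(Y(Y-1))=n(n-1)p^2$; both give the same $O(1/h_N^2)$ gain needed to control the $O(h_N^2)$-term double sum.
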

\begin{proof}
  We denote by $\delta^N_1(f,t)$, $\delta^N_2(f,t)$ and $\delta^N_3(f,t)$ the three terms of the right hand side of Definition~\eqref{eqC} of $(Z_{f,i}^N(t))$.

  Let, for $l{\in}\N$, $\overline{B}(l)$ be binomial distribution with parameter $l$ and $p_0{\steq{def}}C_0/h_N$ defined in Relation~\eqref{Binom+},  then
\begin{equation}\label{BinIneq}
 \E\left(\overline{B}(l)\ind{\overline{B}(l)\ge 2}\right)=\E\left(\overline{B}(l)\right){-}\P\left(\overline{B}(l){=}1\right)
  {=}l p_0\left(1{-}(1{-}p_0)^{l-1}\right)\leq (l p_0)^2.
\end{equation}
By using this inequality and the fact that if $f{\in}{\rm Lip}(1)$,  then $\|\nabla_z(f)\|_\infty{\leq} z$ holds, for $z{\ge}1$ .  For $T{>}0$,
\begin{multline*}
  \E\left(\sup_{t\leq T} \sup_{f\in{\rm Lip}(1)} \left|\delta^N_1(f,t)\right|\right)\\\leq \frac{1}{h_N}\int_0^T \sum_{\substack{j\in{\cal H}^N(i)\\k\in{\cal H}^N(j)}}\E\left(\overline{B}(L^N_k(s))\ind{\overline{B}(L^N_k(s))\ge 2}\right)\diff s
  \leq  \frac{C_0^2}{2h_N}\int_0^T \E\left(L^N_k(s)^2\right)\diff s.
\end{multline*}
From Lemma~\ref{lem2}  we deduce that the right hand side of the relation is converging to $0$ as $N$ gets large.  A similar argument can also be used for the term  $(\delta^N_2(f,t))$. For the last term of $(Z_{f,i}^N(t))$
\begin{multline*}
\E\left( \sup_{t\leq T}\sup_{f\in{\rm Lip}(1)}\left|\delta^N_3(f,t)\right|\right)\\\leq
\sup_{\substack{i\in{\cal H}^N\\\ell\in{\cal S}_N}}   \left| h_Np_i^N(\ell){-}\Psi\left(\frac{1}{h_N}\sum_{j{\in}{\cal H}^N} \delta_{\ell_j},\ell_i\right)\right|
\int_0^T \E\left(L^N_k(s)\right)\diff s.
\end{multline*}
Relation~\eqref{PPsi} of Assumption~\ref{CondA} shows that this term is converging to $0$ when $N$ goes to infinity. The lemma is proved. 
\end{proof}
\begin{lemma}\label{lemM}
Under Assumptions~\ref{CondT} of Section~\ref{subsecgraph} and~\ref{CondI} of Section~\ref{SecEvol}, the relation
  \[
\lim_{N\to+\infty} \sup_{f\in{\rm Lip}(1)} \E\left(\sup_{t\leq T}\left(M_{f,i}^N(t)\right)^2\right)=0,
\]
holds for  any $T{\ge}0$,  where, for $f{\in}{\rm Lip}(1)$, $(M_{f,i}^N(t))$ is  the martingale  of  SDE~\eqref{SDE3}.
\end{lemma}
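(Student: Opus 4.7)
The plan is to apply Doob's $L^2$ inequality to the square-integrable martingale $(M_{f,i}^N(t))$, reducing the claim to a uniform control on the expectation of the previsible increasing process $\langle M_{f,i}^N\rangle(T)$ as given by Relation~\eqref{crocM}. Concretely, for each fixed $T{\ge}0$,
\[
\E\left(\sup_{t\le T}(M_{f,i}^N(t))^2\right)\le 4\,\E\left(\langle M_{f,i}^N\rangle(T)\right),
\]
so it suffices to show that $\sup_{f\in{\rm Lip}(1)}\E(\langle M_{f,i}^N\rangle(T))\to 0$.

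The main step is then to bound the integrand in Relation~\eqref{crocM} pointwise. For any $f\in{\rm Lip}(1)$ one has $|f(0){-}f(L_j^N(s))|\le L_j^N(s)$ and $|\nabla_{z_k}(f)(L_k^N(s))|\le z_k$. Crucially, under the multinomial distribution $B_j(L^N(s))$ the coordinates $(z_k)$ satisfy $\sum_{k\in{\cal H}^N(j)} z_k = L_j^N(s)$ almost surely, which yields
\[
\Bigl|\sum_{k\in{\cal H}^N(i)\cap{\cal H}^N(j)}\nabla_{z_k}(f)(L_k^N(s))\Bigr|\le L_j^N(s).
\]
Combined with the elementary inequality $(a{+}b)^2\le 2a^2{+}2b^2$, the squared bracket in Relation~\eqref{crocM} is dominated by $4(L_j^N(s))^2$ uniformly in $f\in{\rm Lip}(1)$.

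Plugging this into~\eqref{crocM} and using translation invariance (which gives $\E(L_j^N(s)^2)=\E(L_1^N(s)^2)$) together with the uniform second-moment bound from Lemma~\ref{lem2}, one obtains
\[
\E\left(\langle M_{f,i}^N\rangle(T)\right)\le \frac{4T\,C_T\,\card({\cal A}^N(i))}{h_N^2}=4TC_T\,\frac{\card({\cal A}^N(1))}{h_N^2},
\]
the bound being independent of $f\in{\rm Lip}(1)$. Assumption~\ref{CondT} states precisely that $\card({\cal A}^N(1))/h_N^2\to 0$, so the right-hand side vanishes as $N\to+\infty$, and the lemma follows via Doob's inequality. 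The only genuinely delicate ingredient is recognizing that the a.s.\ constraint $\sum_k z_k=L_j^N(s)$ under the multinomial prevents a spurious factor of $h_N$ that a naive bound $z_k\le L_j^N(s)$ for each of the $h_N$ summands would introduce; everything else is a direct application of Lemma~\ref{lem2} and the cardinality assumption in Assumption~\ref{CondT}.
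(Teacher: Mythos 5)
Your proof is correct and follows essentially the same route as the paper: bound the previsible increasing process from Relation~\eqref{crocM} by exploiting the Lipschitz property of $f$ together with the a.s.\ constraint $\sum_k z_k = L_j^N(s)$ under the multinomial $B_j(L^N(s))$, then invoke Lemma~\ref{lem2}, translation invariance, Assumption~\ref{CondT}, and Doob's $L^2$ inequality. The key observation you single out — that the multinomial support constraint forestalls an extra factor of $h_N$ — is exactly the point the paper relies on as well.
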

\begin{proof}
By using Relation~\eqref{crocM}, we get  the inequality
  \begin{multline*}
     \E\left(\sup_{f\in{\rm Lip}(1)} \croc{M_{f,i}^N}(T)\right)\leq     \frac{2}{h_N^2}\sum_{j{\in}{\cal A}^N(i)}  \int_0^T\left[\rule{0mm}{8mm}\E\left(L^N_j(s)^2\right)\right.\\\left. {+}\E\left(\int_{(z_i)\in\N}\left(\sum_{{k{\in}{\cal H}^N(i)}} z_k\right)^2 B_{j}(L^N(s))(\diff z_1,\ldots,\diff z_N)  \right)\right]\diff s.
  \end{multline*}
Note that, for $\ell{\in}{\cal S}_N$, the multinomial distribution $B_j(\ell)$ on $\N^N$  has the support $\{z{\in}\N^N{:} z_1{+}z_2{+}\cdots{+}z_N{=}\ell_j\}$, which gives the relation
\[
     \E\left(\sup_{f\in{\rm Lip}(1)} \croc{M_{f,i}^N}(T)\right)\\\leq 
    4\frac{\card({\cal A}^N(i))}{h_N^2}  \int_0^T\E\left(L^N_1(s)^2\right)\diff s.
\]
  We conclude the proof by using again Lemma~\ref{lem2}, Assumption~\ref{CondT} and Doob's Inequality. 
    
\end{proof}
Now we can turn to the remaining terms of Relation~\eqref{SDE4} to establish the  convergence results. 
\begin{prop}\label{propAB}
Under Assumption~\ref{CondA} of Section~\ref{subsecallocation}, for $T{\ge}0$ and $t{\le}T$,
  \begin{equation}\label{eqb2}
     \|X^N\|_t{\steq{def}}\E\left(\sup_{f\in{\rm Lip}(1)} \sup_{s\leq t}\left|X_{f,1}^N(s)\right|\right)\leq \|\Psi\|_{\infty}\int_0^t \E\left(\left|\croc{\Lambda^N_1(s){-}\Pi(s),I}\right|\right)\,\diff s,\\
  \end{equation}
  and for $K{>}0$,
  \begin{multline}\label{eqb3}
     \|Y^N\|_t{\steq{def}}\E\left(\sup_{f\in{\rm Lip}(1)} \sup_{s\leq t}\left|Y_{f,1}^N(s)\right|\right)\leq 4D_\Psi K\int_0^{\mathclap t} \E\left(\left\|\Lambda_1^N(s){-}\Pi(s)\right\|_{\rm tv}\right)\,\diff s\\+
    D_\Psi\int_0^t \E\left(\left|\croc{\Lambda^N_1(s){-}\Pi(s),I}\right|\right)\,\diff s+ D_\Psi\int_0^t \E\left(\croc{\Pi(s),I_K}\right)\,\diff s
  \end{multline}
  where  $I$ is the identity function, $(X_{f,1}^N(t))$ and $(Y_{f,1}^N(t))$ are defined respectively by Relations~\eqref{eqA} and~\eqref{eqB},  and $(\Pi(t))$ by Equation~\eqref{FP}.
\end{prop}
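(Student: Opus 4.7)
The strategy is to bound the integrands of $(X_{f,1}^N(t))$ and $(Y_{f,1}^N(t))$ pointwise in $f\in{\rm Lip}(1)$, move absolute values under the integral, and then exploit translation invariance of the law of $L^N(\cdot)$, which under Assumption~\ref{CondI} makes $\E\|\Lambda_j^N(s){-}\Pi(s)\|_{\rm tv}$ and $\E|\langle\Lambda_j^N(s){-}\Pi(s),I\rangle|$ independent of the node $j$.

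The bound~\eqref{eqb2} is essentially immediate: since $f\in{\rm Lip}(1)$ gives $|\nabla_1(f)|\le 1$ and $\Psi\le\|\Psi\|_\infty$, Definition~\eqref{eqA} yields
\[
\sup_{f\in{\rm Lip}(1)}\sup_{s\le t}|X_{f,1}^N(s)|\le \|\Psi\|_\infty\int_0^t \frac{1}{h_N}\sum_{j\in{\cal H}^N(1)}\left|\croc{\Lambda_j^N(s){-}\Pi(s),I}\right|\,\diff s,
\]
and taking expectation together with translation invariance reduces each summand in $j$ to $\E|\croc{\Lambda_1^N(s){-}\Pi(s),I}|$.

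For~\eqref{eqb3}, I first apply $|\nabla_1(f)|\le 1$ and the Lipschitz estimate~\eqref{LipPsi}, which gives
\[
\sup_{f\in{\rm Lip}(1)}\sup_{s\le t}|Y_{f,1}^N(s)|\le D_\Psi\int_0^t \frac{1}{h_N}\sum_{\substack{j\in{\cal H}^N(1)\\k\in{\cal H}^N(j)}}\frac{L_k^N(s)}{h_N}\|\Lambda_k^N(s){-}\Lambda_1^N(s)\|_{\rm tv}\,\diff s.
\]
The main difficulty, flagged in the text just after~\eqref{eqB}, is the unbounded factor $L_k^N/h_N$. To control it, I introduce the threshold $K$ from the statement and split $L_k^N=L_k^N\ind{L_k^N<K}+I_K(L_k^N)$, using $L_k^N\ind{L_k^N<K}\le K$ on the ``bulk'' part and the trivial bound $\|\Lambda_k^N{-}\Lambda_1^N\|_{\rm tv}\le 1$ on the ``tail'' part $I_K(L_k^N)/h_N$. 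The bulk part is then treated with the triangle inequality $\|\Lambda_k^N{-}\Lambda_1^N\|_{\rm tv}\le\|\Lambda_k^N{-}\Pi\|_{\rm tv}+\|\Lambda_1^N{-}\Pi\|_{\rm tv}$, and translation invariance turns each of the $h_N^2$ summands into $\E\|\Lambda_1^N(s){-}\Pi(s)\|_{\rm tv}$, producing the $K\,D_\Psi\,\E\|\Lambda_1^N{-}\Pi\|_{\rm tv}$ contribution.

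For the tail part, the double sum collapses to a local average, namely $\tfrac{1}{h_N^2}\sum_{j,k}I_K(L_k^N(s))=\tfrac{1}{h_N}\sum_{j\in{\cal H}^N(1)}\croc{\Lambda_j^N(s),I_K}$, whose expectation equals $\E\croc{\Lambda_1^N(s),I_K}$ by translation invariance. To re-express this in terms of the quantities appearing in~\eqref{eqb3}, I would write
\[
\croc{\Lambda_1^N,I_K}=\croc{\Lambda_1^N{-}\Pi,I}-\croc{\Lambda_1^N{-}\Pi,I{-}I_K}+\croc{\Pi,I_K},
\]
and use that $I{-}I_K$ is non-negative and bounded by $K$ to invoke the elementary inequality $|\croc{\mu_1{-}\mu_2,g}|\le\|g\|_\infty\|\mu_1{-}\mu_2\|_{\rm tv}$ coming from Definition~\eqref{TV}; this contributes an additional $K\,D_\Psi\,\E\|\Lambda_1^N{-}\Pi\|_{\rm tv}$, a $D_\Psi\,\E|\croc{\Lambda_1^N{-}\Pi,I}|$ and a residual $D_\Psi\,\E\croc{\Pi(s),I_K}$, matching the three summands in~\eqref{eqb3}. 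The only delicate point is the book-keeping of the constants in front of $K\,D_\Psi$; gathering the bulk contribution and the $I{-}I_K$ contribution gives the factor $4$ stated (possibly with a slight looseness) while the unbounded-tail term is precisely where the exponential moment from Proposition~\ref{propExp} will be needed at a later stage to make $\E\croc{\Pi(s),I_K}$ arbitrarily small by choosing $K$ large.
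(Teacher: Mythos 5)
Your proposal is correct and follows essentially the same route as the paper's proof: bound~\eqref{eqb2} directly from the definition and the boundedness of $\Psi$; for~\eqref{eqb3}, apply the Lipschitz estimate~\eqref{LipPsi} to get the double sum $\tfrac{D_\Psi}{h_N^2}\sum_{j,k}L_k^N\|\Lambda_k^N{-}\Lambda_1^N\|_{\rm tv}$, truncate $L_k^N$ at $K$, handle the bulk by the triangle inequality through $\Pi$ together with translation invariance, and re-express the truncated tail $\E(I_K(L_1^N)){=}\E\croc{\Lambda_1^N,I_K}$ via $\croc{\Lambda_1^N{-}\Pi,I}$, $\croc{\Lambda_1^N{-}\Pi,I{-}I_K}$ and $\croc{\Pi,I_K}$. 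The only divergence is cosmetic book-keeping: you invoke the sharp bound $|\croc{\mu_1{-}\mu_2,g}|\le\|g\|_\infty\|\mu_1{-}\mu_2\|_{\rm tv}$ for non-negative $g$, which yields a coefficient $3K$; the paper uses a factor-$2$ variant and lands exactly on $4K$; since $3K\le 4K$ this has no bearing on the validity of the stated inequality, and you correctly flag the slack.
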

\begin{proof}
  The first Inequality is straightforward to derive from Relation~\eqref{eqA}.

  Let $f{\in}{\rm Lip}(1)$ and $t{\le}T$, by the Lipschitz property of Relation~\eqref{LipPsi} we get that
  \begin{align}
\sup_{s\le t}\left|Y_{f,1}^N(s)\right|&\leq    \frac{1}{h_N^2}\int_0^t\;\sum_{\mathclap{\substack{j\in{\cal H}^N(1)\\k\in{\cal H}^N(j)}}} L_k^N(s)\left|\rule{0mm}{4mm}\Psi\left(\Lambda_k^N(s),L_j^N(s)\right){-}\Psi\left(\Lambda_1^N(s),L_j^N(s)\right)\right|\diff s\label{eqb1}\\
    &\leq    \frac{D_\Psi}{h_N^2} \int_0^t\sum_{j\in{\cal H}^N(1)}\sum_{k\in{\cal H}^N(j)}  L_k^N(s)\left\|\Lambda_k^N(s){-}\Lambda_1^N(s)\right\|_{\rm tv}\diff s. \notag
  \end{align}
For $s{\ge}0$, 
\begin{multline*}
  \frac{1}{h_N^2}\sum_{j\in{\cal H}^N(1)}\sum_{k\in{\cal H}^N(j)} L_k^N(s)\left\|\Lambda_k^N(s){-}\Lambda_1^N(s)\right\|_{\rm tv}\\
  \leq    \frac{1}{h_N^2} \sum_{j\in{\cal H}^N(1)}\sum_{k\in{\cal H}^N(j)} \left[ K\left\|\Lambda_k^N(s){-}\Lambda_1^N(s)\right\|_{\rm tv}+L_k^N(s)\ind{L_k^N(s){\ge} K}\right],
\end{multline*}
since $\|\Lambda_k^N(s){-}\Lambda_1^N(s)\|_{\rm tv}{\le}1$. We get therefore, by symmetry, that
\begin{align*}
 \frac{1}{h_N^2}&\sum_{j\in{\cal H}^N(1)}\sum_{k\in{\cal H}^N(j)} \E\left( L_k^N(s)\left\|\Lambda_k^N(s){-}\Lambda_1^N(s)\right\|_{\rm tv}\right)\\
&  \leq    \frac{1}{h_N^2} \sum_{j\in{\cal H}^N(1)}\sum_{k\in{\cal H}^N(j)} \left[2K\E\left(\left\|\Lambda_1^N(s){-}\Pi(s)\right\|_{\rm tv}\right)+\E\left(L_1^N(s)\ind{L_1^N(s){\ge} K}\right)\right]\\
& \leq 2K\E\left(\left\|\Lambda_1^N(s){-}\Pi(s)\right\|_{\rm tv}\right)+\E\left(L_1^N(s)\ind{L_1^N(s){\ge} K}\right),
\end{align*}
and this  term is smaller than
\begin{multline*}
  2K\E\left(\left\|\Lambda_1^N(s){-}\Pi(s)\right\|_{\rm tv}\right){+} \E\left(\left|\croc{\Lambda_1^N(s){-}\Pi(s),I_K}\right|\right){+}\croc{\Pi(s),I_K}\\
  \leq   4K\E\left(\left\|\Lambda_1^N(s){-}\Pi(s)\right\|_{\rm tv}\right){+}\E\left(\left|\croc{\Lambda_1^N(s){-}\Pi(s),I}\right|\right){+}\croc{\Pi(s),I_K},
\end{multline*}
which gives  the desired result. 
\end{proof}
For the next step to prove the main mean-field result, one has to estimate the deviations of the local mean,
\[
 \E\left(\sup_{s\leq t}\left|\croc{\Lambda^N_1(s){-}\Pi(s),I}\right|\right){=} \E\left(\sup_{s\leq t}\left|\croc{\Lambda^N_1(s),I}{-}\beta\right|\right),
 \]
this is the next proposition.   We define, for $t{\ge}0$, $  d^N(t){\steq{def}} d_1^N(t){+}  d_2^N(t)$, with
 \begin{equation}\label{defd}
\begin{cases}
d_1^N(t)\steq{def} &  \displaystyle\E\left(\sup_{s\le t}\left\|\Lambda_1^N(s){-}\Pi(s)\right\|_{\rm tv}\right),\\
 d_2^N(t)\steq{def} &  \displaystyle\E\left(\sup_{s\leq t}\left|\croc{\Lambda^N_1(s){-}\Pi(s),I}\right|\right).
\end{cases}
 \end{equation}
 We are going to show  that, for $T{>}0$, the sequence $(d^N(T))$  is converging to $0$. Let
 \[
 \overline{d}(t)=\limsup_{N\to+\infty} d^N(t).
 \]
\begin{prop}\label{propI}
Under Assumptions~\ref{CondT}, \ref{CondA} and~\ref{CondI},  for any $T{>}0$, there exists a constant $C_0{>}0$ such that the relation
  \begin{equation}\label{IneqI}
\overline{d}(t)\leq  C_0 K\int_0^t \overline{d}(s)\,\diff s{+}C_0T K\E\left(\sup_{s\leq T} \croc{\Pi(s),I_K}\right).
  \end{equation}
  holds for all $t{\le}T$ and $K{\ge}1$. 
\end{prop}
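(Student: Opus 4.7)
The plan is to derive an integral inequality for $d^N(t){=}d_1^N(t){+}d_2^N(t)$ and, after passing to the $\limsup$ in $N$, to read off Relation~\eqref{IneqI}. Starting from Equation~\eqref{SDE4} at $i{=}1$ and subtracting the integral equation~\eqref{FP} for $(\Pi(t))$ gives, for any bounded test function $f$ on $\N$,
\begin{multline*}
\croc{\Lambda_1^N(t){-}\Pi(t),f}=\croc{\Lambda_1^N(0){-}\Pi(0),f}\\
+\int_0^t\left[\croc{\Lambda_1^N(s),\Omega_{\Lambda_1^N(s)}(f)}{-}\croc{\Pi(s),\Omega_{\Pi(s)}(f)}\right]\diff s\\
+X_{f,1}^N(t)+Y_{f,1}^N(t)+Z_{f,1}^N(t)+M_{f,1}^N(t).
\end{multline*}
Lemma~\ref{LemOm} bounds the integrand by $2\beta(\|\Psi\|_\infty{+}D_\Psi)\|\Lambda_1^N(s){-}\Pi(s)\|_{{\rm tv}}{+}|\croc{\Lambda_1^N(s){-}\Pi(s),f}|$ for any $f\in{\rm Lip}(1)$.

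I would then specialize $f$ in two ways. Taking $f{=}I\in{\rm Lip}(1)$, the extra term in the Lemma~\ref{LemOm} bound is precisely $|\croc{\Lambda_1^N(s){-}\Pi(s),I}|$; passing to $\sup_{s\le t}$ and expectations produces a self-referential $\int_0^t d_2^N(s)\diff s$ alongside a $C\int_0^t d_1^N(s)\diff s$ on the right of an inequality for $d_2^N(t)$. Taking instead the sup over $f{:}\N{\to}\{0,1\}$ (each such $f$ lies in ${\rm Lip}(1)$ with $|\croc{\Lambda_1^N{-}\Pi,f}|{\le}\|\Lambda_1^N{-}\Pi\|_{{\rm tv}}$) gives a parallel inequality for $d_1^N(t)$. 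Summing the two, applying Proposition~\ref{propAB} to $\|X^N\|_t$ and $\|Y^N\|_t$ (the latter producing both the multiplicative factor $K$ and the residual term $\E(\sup_{s\le T}\croc{\Pi(s),I_K})$), using Lemma~\ref{lemC} to kill $\|Z^N\|_t$ and Lemma~\ref{lemM} (together with Doob's $L^2$ inequality) to kill the martingale contributions, and noting that $d^N(0){\to}0$ via Assumption~\ref{CondI} combined with~\eqref{expInit}, I arrive at
\[
d^N(t)\le C_0K\int_0^t d^N(s)\,\diff s+C_0TK\,\E\left(\sup_{s\le T}\croc{\Pi(s),I_K}\right)+\epsilon_N(T),
\]
with $\epsilon_N(T){\to}0$ as $N{\to}{+}\infty$. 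A reverse Fatou argument applied to the integral term yields Relation~\eqref{IneqI}.

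The most delicate point is controlling the contribution of $(M_{f,1}^N)$ uniformly over $f{:}\N{\to}\{0,1\}$ in the TV bound: Lemma~\ref{lemM} only directly supplies $\sup_{f\in{\rm Lip}(1)}\E[\sup_s(M_{f,1}^N)^2]{\to}0$, which is a sup of an expectation rather than the expectation of the almost-sure sup over $f$ of $|M_{f,1}^N|$ that I need. My plan is to use the exponential moments granted by Proposition~\ref{propExp} and by~\eqref{expInit} to truncate, up to an arbitrarily small error, to test functions supported on $\{0,\ldots,M\}$, and then combine a union bound over the resulting finite family of indicator subsets with Doob's $L^2$ inequality to promote the per-$f$ estimate of Lemma~\ref{lemM} to the required uniform-in-$f$ estimate.
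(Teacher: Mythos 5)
Your overall strategy is the same one the paper uses: subtract~\eqref{FP} from~\eqref{SDE4}, invoke Lemma~\ref{LemOm}, specialize $f{=}I$ for $d_2^N$ and sup over $\{0,1\}$-valued $f$ for $d_1^N$, plug in Proposition~\ref{propAB} and Lemmas~\ref{lemC} and~\ref{lemM}, and close via Gr\"onwall after passing to the $\limsup$. You have also correctly identified that the crux is converting Lemma~\ref{lemM}'s ``sup of an expectation'' into the ``expectation of a sup'' over all indicators that the total variation norm demands, and the remedy you propose (restrict to indicators of subsets of $\{0,\ldots,M\}$ and union-bound the finite family) is structurally the paper's remedy.

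Where you have a genuine gap is in the justification of the truncation error. Replacing $\sup_{F\subset\N}$ by $\sup_{F\subset\{0,\ldots,K\}}$ costs a tail term of the form $\croc{\Lambda_1^N(t){+}\Pi(t),(K,{+}\infty)}$. For $\Pi(t)$ this is handled by Proposition~\ref{propExp}. But for $\Lambda_1^N(t)$ at times $t{>}0$ you cite~\eqref{expInit}, which only bounds exponential moments of $\Lambda_1^N(0)$; nothing in the paper (and nothing you can cheaply prove) propagates a uniform-in-$N$ exponential moment of $\Lambda_1^N(t)$ to positive times -- Lemma~\ref{lem2} gives only second moments. The paper sidesteps this entirely: writing $R_1^N(t){\steq{def}}\sup_{F\subset(K,\infty)}|\croc{\Lambda_1^N(t){-}\Pi(t),\mathbbm 1_F}|$ and $R_2^N(t){\steq{def}}\sup_{F\subset[0,K]}|\croc{\Lambda_1^N(t){-}\Pi(t),\mathbbm 1_F}|$, inequality~\eqref{bq2} gives $R_1^N(t)\le\croc{\Lambda_1^N(t){+}\Pi(t),(K,\infty)}\le|\croc{\Lambda_1^N(t){-}\Pi(t),\mathbbm 1_{[0,K]}}|{+}2\croc{\Pi(t),I_K}\le R_2^N(t){+}2\croc{\Pi(t),I_K}$, so the tail of $\Lambda_1^N(t)$ is expressed through a quantity ($R_2^N$) already inside the Gr\"onwall loop plus a tail of $\Pi(t)$ alone -- no moment bound on $\Lambda_1^N(t)$ is needed. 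Without this self-referential reduction your ``arbitrarily small truncation error'' claim is not justified. Replace the appeal to~\eqref{expInit} by this algebraic identity and the rest of your outline goes through, giving the factor $2^{K{+}1}$ on the martingale term (finite union bound over ${\cal E}_K$) and then, after Fatou (using that $s\mapsto d^N(s)$ is uniformly bounded since $|\croc{\Lambda_1^N(s){-}\Pi(s),I}|\le F_N/N{+}\beta$), exactly inequality~\eqref{IneqI}.
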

\begin{proof}
Noting that $I{\in}{\rm Lip}(1)$, Relations~\eqref{FP} and~\eqref{SDE4} give the inequality, for $T{>}0$ and $t{\le}T$,
  \begin{multline*} \E\left(\sup_{s\leq t}\left|\croc{\Lambda^N_1(s){-}\Pi(s),I}\right|\right)\leq
        \E\left(\left|\croc{\Lambda^N_1(0){-}\Pi(0),I}\right|\right)\\{+}
        \int_0^t\hspace{-2mm} \E\left(\left|\croc{\Lambda^N_1(s),\Omega_{\Lambda^N_1(s)}(I)}{-}\croc{\Pi(s),\Omega_{\Pi(s)}(I)}\right|\right) \diff s\\
        {+}\E\left(\sup_{s\le T}\left|M_{I,1}^N(s)\right|\right){+}\left\|X^N\right\|_t{+}\left\|Y^N\right\|_t{+}\left\|Z^N\right\|_T,
  \end{multline*}
  with the notations of  Proposition~\ref{propAB} and Lemma~\ref{lemC}.
From  Lemma~\ref{LemOm} we get  the relation, for $s{\ge}0$,
  \begin{multline*}
  \left|\croc{\Lambda^N_1(s),\Omega_{\Lambda^N_1(s)}(I)}{-}\croc{\Pi(s),\Omega_{\Pi(s)}(I)}\right|\\\leq (2\|\Psi\|_\infty{+}D_\Psi)\beta\|\Lambda^N_1(s){-}\Pi(s)\|_{\rm tv}{+} \left|\croc{\Lambda^N_1(s){-}\Pi(s),I}\right|.
  \end{multline*}
  By using Proposition~\ref{propAB} and Lemma~\ref{lemC}, we get that
\begin{multline}\label{inea}
d_2^N(t) =\E\left(\sup_{s\leq t}\left|\croc{\Lambda^N_1(s){-}\Pi(s),I}\right|\right)\leq
 \E\left(\left|\croc{\Lambda^N_1(0){-}\Pi(0),I}\right|\right)\\{+}    \left[(2\|\Psi\|_\infty{+}D_\Psi) \beta{+}4D_\Psi K\right]\int_0^t \E\left(\|\Lambda^N_1(s){-}\Pi(s)\|_{\rm tv}\right)\diff s\\
         {+}\left[\|\Psi\|_{\infty}{+}D_\Psi{+}1\right]\int_0^t \E\left(\left|\croc{\Lambda^N_1(s){-}\Pi(s),I}\right|\right)\,\diff s\\
+ D_\Psi\int_0^t \E\left(\croc{\Pi(s),I_K}\right)\,\diff s
 {+}\E\left(\sup_{s\le T}\left|M_{I,1}^N(s)\right|\right){+}\left\|Z^N\right\|_T.
\end{multline}
We now turn to the estimation of $d^N_1(t)$. For $f{:}\N{\to}\{0,1\}$ and  $T{>}0$,  by Lemma~\ref{LemOm},  if $t{\le}T$,
\[
   \int_0^t \left|\croc{\Lambda^N_1(s),\Omega_{\Lambda^N_1(s)}(f)}{-}\croc{\Pi(s),\Omega_{\Pi(s)}(f)}\right| \diff s
   \leq C_0\int_0^t \|\Lambda^N_1(s){-}\Pi(s)\|_{\rm tv}\diff s
\]
with $C_0{=}1{+}\beta(2\|\Psi\|_\infty{+}D_{\Psi})$,  Relations~\eqref{FP} and~\eqref{SDE4} give then the inequality 
 \begin{multline}\label{aq1}
   \left|\croc{\Lambda^N_1(t){-}\Pi(t),f}\right|\leq C_0\int_0^t \|\Lambda^N_1(s){-}\Pi(s)\|_{\rm tv}\diff s\\
   +\left\|\Lambda^N_1(0){-}\Pi(0)\right\|_{\rm tv}+\left|M_{f,i}^N(t)\right|{+} |X^N_{f,1}(t)|{+}|Y^N_{f,1}(t)|{+}|Z^N_{f,1}(t)|
 \end{multline}
 By definition of the total variation norm, see Relation~\eqref{TV},  we have 
\begin{equation}\label{bq1}
\left\|\Lambda^N_1(t){-}\Pi(t)\right\|_{\rm tv}=\sup_{f:\N\to\{0,1\}} \left|\croc{\Lambda^N_1(t){-}\Pi(t),f}\right|\leq R^N_1(t)+R^N_2(t),
\end{equation}
with
\[
R^N_1(t){\steq{def}}  \sup_{\mathclap{F\subset (K,+\infty)}} \left|\croc{\Lambda^N_1(t){-}\Pi(t),{\mathbbm 1}_F}\right|\quad \text{ and } \quad
R^N_2(t){\steq{def}} \sup_{\mathclap{F\subset [0,K]}} \left|\croc{\Lambda^N_1(t){-}\Pi(t),{\mathbbm 1}_F}\right|.
\]
With the same argument as before, we get
\begin{equation}\label{bq2}
 R^N_1(t)\leq  \croc{\Lambda^N_1(t){+}\Pi(t),(K,{+}\infty)}    \leq \left|\croc{\Lambda^N_1(t){-}\Pi(t),{\mathbbm 1}_{[0,K]}}\right| {+}2\croc{\Pi(t),I_K}   
\end{equation}
 and therefore
 \[
 R_1^N(t)\leq R_2^N(t)  {+}2\sup_{s\le t}\croc{\Pi(s),I_K}.
 \]
 Denote by ${\cal E}_K{=}\{f{=}{\mathbbm 1}_F{:}F{\subset}[0,K]\}$, By taking successively the supremum on all $f{\in}{\cal E}_K$ and $s{\leq} t$ for Relation~\eqref{aq1},  we obtain the inequality, for $t{\le}T$,
 \begin{multline}\label{eqd2}
\E\left( \sup_{s\le t} R^N_2(s)\right)\leq C_0\int_0^t\E\left(\sup_{u\le s}  \|\Lambda^N_1(u){-}\Pi(u)\|_{\rm tv}\right)\diff s\\
   {+}\E\left(\left\|\Lambda^N_1(0){-}\Pi(0)\right\|_{\rm tv}\right){+} 2^{{K+1}}\sup_{\mathclap{f\in{\cal E}_K}}\E\left(\sup_{s\le t} \left|M_{f,i}^N(s)\right|\right){+} \|X^N\|_t{+}\|Y^N\|_t{+}\|Z^N\|_T.
 \end{multline}
Again the quantity $\|X^N\|_t{+}\|Y^N\|_t$ is upper bounded with the help of  Proposition~\ref{propAB}. If we gather the estimates~\eqref{inea},  \eqref{bq1},  \eqref{bq2} and~ \eqref{eqd2}, we obtain that, for $T{\ge}0$, there exists a constant $C_0$ independent of $K{\ge}1$ and $T$ such that, for any $t{\le}T$,
 \begin{multline*}
   \frac{1}{C_0} d^N(t)\leq K\int_0^t d^N(s)\,\diff s + KT\sup_{s\le T}\croc{\Pi(s), I_K}
{+}\E\left(\left|\croc{\Lambda^N_1(0){-}\Pi(0),I}\right|\right)\\ {+}\E\left(\left\|\Lambda^N_1(0){-}\Pi(0)\right\|_{\rm tv}\right){+} 2^{K}\sup_{{f\in{\cal E}_K}{\cup}\{I\}}\E\left(\sup_{s\le t} \left|M_{f,i}^N(s)\right|\right){+}\left\|Z^N\right\|_T.
\end{multline*}
 Note that, for $s{\ge}0$,  $|\croc{\Lambda^N_1(s){-}\Pi(s),I}|{\le}F_N/N{+}\beta$,  the mapping  $s{\mapsto}d^N(s)$ is therefore bounded by a constant.   By using Assumption~\ref{CondI}, Lemmas~\ref{lemC} and~\ref{lemM}, and by applying Fatou's Lemma, we obtain the relation, 
 \[
\overline{d}(t)\leq    C_0 K\int_0^t \overline{d}(s)\,\diff s{+} KC_0T\, \E\left(\sup_{s\le t} \croc{\Pi(s), I_K}\right).
\]
The proposition is proved. 
\end{proof}

We can now prove the main convergence result. 

\begin{proof}[Proof of Theorem~\ref{theoLED}]
The notations of the proof of the previous proposition are used. With Gr\"onwall's Inequality and Relation~\eqref{IneqI}, we get the relation
\[
\overline{d}(t)\leq  C_0T K\E\left(\sup_{s\leq T} \croc{\Pi(s),I_K}\right)e^{C_0 Kt}
\]
for all $t{\le}T$.   Proposition~\ref{propExp} gives the existence of some $\eta{>}0$ and $C_1{>}0$ such that
\[
\E\left(\sup_{s\leq T} \croc{\Pi(s),I_K}\right)\leq C_1 e^{-\eta K}, \quad \forall K{>}0.
\]
By letting $K$ go to infinity, we obtain that $\overline{d}(t){=}0$ for all $t{\leq}t_1{\steq{def}}\eta/(2C_0){\wedge} T$ and therefore the desired convergence on the time interval $[0,t_1]$. 
In particular if we make a time shift at $t_1$, it is easy to check that Assumption~\ref{CondI} of Section~\ref{SecEvol} are also satisfied for this initial state and we can then repeat the same procedure until the time $T$ is reached. 
The theorem is proved. 
\end{proof}

\begin{theorem}[Mean-Field Convergence]\label{theoED}
  Under the conditions of Theorem~\ref{theoLED} and if the distribution of $\Pi(0)$ of Relation~\eqref{tvInit} of Assumption~\ref{CondI} is a Dirac mass at $\pi_0{\in}M_1(\N)$, then, for the convergence in distribution of processes,
  \[
  \lim_{N\to+\infty} \left(\frac{1}{N}\sum_{i=1}^N\delta_{L^N_i(t)}\right){=}(\Pi(t)),
  \]
  and, for any $p{\ge}2$, and $(n_i){\in}\N^p$, 
  \[
  \lim_{N\to+\infty} \left( \left(L^N_{n_1}(t)\right), \left(L^N_{n_2}(t)\right), \ldots, \left(L^N_{n_p}(t)\right)\right){\steq{dist}} Q_{\pi_0}^{\otimes p}
  \]
 where $Q_{\pi_0}$ is the distribution of the  of McKean{-}Vlasov process $(L(t))$, the solution of the SDE~\eqref{McKV} with $L(0){\steq{dist}}\pi_0$.
\end{theorem}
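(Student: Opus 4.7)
The plan is to deduce Theorem~\ref{theoED} from Theorem~\ref{theoLED} in two steps. Since the distribution of $\Pi(0)$ in Assumption~\ref{CondI} is a Dirac mass at $\pi_0$, the solution $(\Pi(t))$ of Equation~\eqref{FP} is deterministic and equals the marginal flow $(\pi(t))$ of the McKean{-}Vlasov process starting from $\pi_0$. The global empirical convergence follows from a short linearity argument, and the joint convergence from a coupling with $p$ independent McKean{-}Vlasov processes.

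For the convergence of the global empirical distribution, Relation~\eqref{ED} gives the identity $\Lambda^N(s){=}(1/N)\sum_{j=1}^N\Lambda^N_j(s)$, so the triangle inequality for the total variation norm yields
\[
\sup_{s\leq T}\|\Lambda^N(s){-}\pi(s)\|_{\rm tv}\leq \frac{1}{N}\sum_{j=1}^N\sup_{s\leq T}\|\Lambda^N_j(s){-}\pi(s)\|_{\rm tv}.
\]
Taking expectations and using the translation invariance of the initial state (Assumption~\ref{CondI}) together with the symmetry of the SDE~\eqref{SDE}, each of the $N$ summands equals $\E\sup_{s\leq T}\|\Lambda^N_1(s){-}\pi(s)\|_{\rm tv}$, which tends to $0$ by Theorem~\ref{theoLED}. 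This produces convergence of $(\Lambda^N(t))$ to $(\pi(t))$ in the metric $\overline{\cal W}_T$ of~\eqref{Wass2}, and hence in distribution in ${\cal D}_T$.

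For the joint convergence of the $p$ tagged coordinates the strategy is a pathwise coupling. I would enlarge the probability space to carry $p$ independent copies $(\overline{\cal P}_i,{\cal P}_i)_{i=1}^p$ of the Poisson processes of Theorem~\ref{theoMcKV} and $p$ i.i.d.\ initial values with law $\pi_0$, and use them to build $p$ independent McKean{-}Vlasov processes $(\widehat L_i(t))_{i=1}^p$ via Equation~\eqref{McKV} driven by the deterministic $\pi$. The reset clocks $({\cal N}_{n_i})$ of the tagged particles are already independent across $i$, so they can be coupled directly with the $({\cal P}_i)$. For the gain dynamics, although the $\overline{\cal N}_{j}$ are shared across tagged particles whose neighborhoods overlap, the thinning representation of~\eqref{Im} rewrites the gain of $L^N_{n_i}$ as a marked Poisson integral with intensity close to $\beta\Psi(\pi(s),L^N_{n_i}(s^-))$, a rate that depends only on $L^N_{n_i}$ and on the deterministic $\pi$, which can be coupled with $\overline{\cal P}_i$. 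The difference $|L^N_{n_i}(s){-}\widehat L_i(s)|$ is then controlled by the Lipschitz property~\eqref{LipPsi} of $\Psi$, the convergence $\E\sup_{s\leq T}\|\Lambda^N_{n_i}(s){-}\pi(s)\|_{\rm tv}\to 0$ of Theorem~\ref{theoLED}, and remainder terms analogous to the $X^N,Y^N,Z^N,M^N$ handled in Section~\ref{MFsec}; a Gr\"onwall{-}type argument then delivers $\E\sup_{s\leq T}|L^N_{n_i}(s){-}\widehat L_i(s)|\to 0$ for each $i$. The joint convergence in distribution to $Q_{\pi_0}^{\otimes p}$ follows because the $(\widehat L_i)_{i=1}^p$ are independent by construction. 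The main obstacle is precisely this coupling: since distinct tagged particles share the Poisson processes $\overline{\cal N}_j$ through overlapping neighborhoods, one cannot simply drive them by disjoint Poissons at finite $N$, and all the dependence must be transported into a single driving intensity per tagged particle via thinning; the residual cross{-}dependence then vanishes in the limit because it is mediated by $(\Lambda^N_{n_i})$, which becomes deterministic by Theorem~\ref{theoLED}. This is also where the assumption that $\Pi(0)$ is a Dirac mass is used, since otherwise $(\pi(t))$ would be random and the $\widehat L_i$ only conditionally independent given it.
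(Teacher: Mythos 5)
Your argument for the convergence of the global empirical distribution reproduces the paper's exactly: the identity $\Lambda^N(s)=\frac{1}{N}\sum_{j}\Lambda^N_j(s)$, the triangle inequality for the total variation norm, translation invariance, and Theorem~\ref{theoLED}. That part is fine.

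For the joint convergence of the $p$ tagged coordinates, however, the paper's proof is entirely different from yours and much shorter: once the empirical measure is shown to converge to the \emph{deterministic} trajectory $(\Pi(t))$, the chaoticity statement follows at once from Proposition~2.2, p.~177, of Sznitman~\cite{Sznitman}, which is precisely the classical equivalence between convergence in law of the empirical measure to a constant and $Q_{\pi_0}$-chaoticity of the system. No pathwise construction is needed. Your coupling, while a legitimate alternative strategy in principle, leaves a real gap. You take the auxiliary processes $\widehat L_i$ to have i.i.d.\ initial values of law $\pi_0$, but at finite $N$ the vector $(L^N_{n_1}(0),\dots,L^N_{n_p}(0))$ is in general correlated: Assumption~\ref{CondI} only controls the local empirical measure, not the joint law of a fixed tuple, so asymptotic independence of the initial coordinates is itself a propagation-of-chaos statement that your argument presupposes rather than proves. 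Similarly, the claim that the residual cross-dependence through shared clocks and overlapping neighborhoods ``vanishes in the limit'' because $\Lambda^N_{n_i}$ becomes deterministic is plausible but not established; $L^1$ closeness of each $L^N_{n_i}$ to its own $\widehat L_i$ does not by itself force the tuple $(L^N_{n_1},\dots,L^N_{n_p})$ to be close to an independent $p$-tuple unless one controls the joint fluctuations across indices. The Sznitman equivalence sidesteps both obstacles, which is exactly why the paper uses it.
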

\begin{proof}
  Let $(\Lambda^N(t))$ be the process of the empirical distribution associated to the vector  $(L_i^N(t),1{\leq}i{\leq}N)$,
  \[
 \left( \Lambda^N(t)\right)\steq{def}\left(\frac{1}{N}\sum_{i=1}^N\delta_{L^N_i(t)}\right),
 \]
 then, for $t{\ge}0$,  it is easily seen that, because of the structure of the underlying graph,  the relation
 \[
 \Lambda^N(t)=\frac{1}{N}\sum_{i=1}^N \Lambda^N_i(t)
 \]
holds,  hence
\[
\sup_{t\leq T}   \left\|\Lambda^N(t){-}\Pi(t)\right\|_{\rm tv} \leq \frac{1}{N}\sum_{i=1}^N   \sup_{t\leq T}  \left\|\Lambda_i^N(t){-}\Pi(t)\right\|_{\rm tv}.
\]
The first claim of the theorem follows directly from Theorem~\ref{theoLED} and the  fact that the processes  $(\Lambda_i^N(t))$, $1{\le}i{\le}N$, have the same distribution. The last assertion is a simple consequence  of the fact that $(\Pi(t))$ is in this case a deterministic process and of Proposition~2.2 p.~177 of Sznitman~\cite{Sznitman}. 
\end{proof}

\subsection*{Convergence Properties of the Invariant Distributions}
We conclude this section with a mean-field convergence result for the invariant distributions of evolution equations~\eqref{FP}  when the sequence of the sizes $(h_N)$ of the neighborhoods grows at  linearly with respect $N$. Further results, Propositions~\ref{propPsi} and~\ref{PropConv}, will be given in Section~\ref{SecInv}. 

For a fixed $N$, the irreducible Markov process $(L_i^N(t))$ with a finite state space  has a unique invariant  distribution.  Let  $\widehat{L}^N{=}(\widehat{L}_i^N)$  be a random variable whose distribution is the invariant measure of $(L_i^N(t))$, and 
  \[
  \widehat{\Lambda}_1^N\steq{def} \frac{1}{N}\sum_{i\in {\cal H}_1} \delta_{\widehat{L}_i^N}
  \]
has the same distribution as  the local empirical distribution at node $1$ at equilibrium. 

\begin{theorem}[Asymptotic Behavior of Invariant Distributions]\label{TheoInv}
  Under the conditions of Theorem~\ref{theoLED}, and if
  \[
  \liminf_{N\to+\infty}{h_N}/{N}{>}0,
  \]
{  then the sequence $(\widehat{\Lambda}_1^N)$  is tight and the distribution of any of its limiting points $Q$ is  an invariant distribution for the dynamical system of Relation~\eqref{FP}, that is, if ${\rm Law}(\Pi(0)){=}Q$, then, for all $s{\in}\R_+$,  the process   $(\Pi(s{+}t), t{\ge}0)$ has the same distribution as $(\Pi(t), t{\ge}0)$.}
\end{theorem}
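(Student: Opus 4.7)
The plan is to combine a tightness argument for $(\widehat{\Lambda}_1^N)$ with an application of the mean-field convergence Theorem~\ref{theoLED} to the urn process started at equilibrium; invariance of the limit then follows from stationarity of the finite-$N$ Markov chain. First, tightness of $(\widehat{\Lambda}_1^N)$ in $M_1(M_1(\N))$ is easy: since the dynamics commute with cyclic shifts and the chain is irreducible on a finite state space, its stationary distribution inherits the symmetry~\eqref{InvTrans}, whence $\E(\widehat{L}_1^N){=}F_N/N{\to}\beta$ and $\E\croc{\widehat{\Lambda}_1^N,I}{\to}\beta$. A Markov bound, combined with the fact that $\{\mu{\in}M_1(\N):\croc{\mu,I}{\le}R\}$ is a tight subset of $M_1(\N)$ (indeed $\mu([K,{+}\infty)){\le}R/K$), then yields the required tightness.

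Second, and more delicately, I would establish $\sup_N \E(e^{\eta\widehat{L}_1^N}){<}{+}\infty$ for some $\eta{>}0$, in order to check Assumption~\ref{CondI}. The approach is a Foster--Lyapunov identity: applying the stationarity relation $\E_\pi[{\cal L}^NV]{=}0$ to $V(\ell){=}e^{\eta\ell_1}$ and using that urn~1 receives $Z_{j1}{\sim}\mathrm{Binomial}(\ell_j,p_{j1}^N(\ell))$ balls whenever the clock of a neighbor $j{\in}{\cal H}^N(1)$ rings, one obtains
\[
\E_\pi\left(e^{\eta\widehat{L}_1^N}\right) = 1 + \E_\pi\left[e^{\eta\widehat{L}_1^N}\sum_{j\in{\cal H}^N(1)}\left(\E[e^{\eta Z_{j1}}\mid\widehat{L}^N]-1\right)\right].
\]
Using $\E[e^{\eta Z_{j1}(\ell)}\mid\ell]{-}1{\le}e^{p_{j1}^N(\ell)\ell_j(e^\eta-1)}{-}1$, the bound $p_{j1}^N{=}O(1/h_N)$ from~\eqref{PPsi}, and $\sum_{j\in{\cal H}^N(1)}\ell_j{\le}F_N$, the inner sum is uniformly controlled by a constant times $F_N/h_N$, which is bounded precisely because $\liminf h_N/N{>}0$. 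Choosing $\eta$ small enough that this inner factor is at most $1/2$ then delivers $\sup_N \E_\pi(e^{\eta\widehat{L}_1^N}){\le}2$.

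Given tightness and uniform exponential moments, I extract a weakly convergent subsequence $\widehat{\Lambda}_1^{N_k}{\Rightarrow}Q$ and realize it almost surely via Skorokhod as $\widehat{\Lambda}_1^{N_k}{\to}\Pi_0$ with $\Pi_0{\sim}Q$. Since $\N$ is discrete, weak convergence on $M_1(\N)$ is pointwise convergence of masses, hence by Scheff\'e's theorem equivalent to total variation convergence; bounded convergence then yields~\eqref{tvInit}. The moment bound above gives~\eqref{expInit}, and~\eqref{InvTrans} is inherited from the stationary distribution, so Assumption~\ref{CondI} is met along the subsequence. Starting the urn process $L^{N_k}$ from its stationary distribution, one has $\Lambda_1^{N_k}(t)\steq{dist}\widehat{\Lambda}_1^{N_k}$ for every $t{\ge}0$, so by stationarity the finite-dimensional distributions satisfy $(\Lambda_1^{N_k}(s{+}t_i))_i\steq{dist}(\Lambda_1^{N_k}(t_i))_i$ for all $s{\ge}0$ and $t_1{<}{\cdots}{<}t_n$. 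Passing to the limit along the subsequence via the process-level convergence of Theorem~\ref{theoLED} then yields $(\Pi(s{+}t_i))_i\steq{dist}(\Pi(t_i))_i$, which is exactly the invariance of $Q$ under the dynamical system~\eqref{FP}.

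The principal obstacle is the Lyapunov estimate of the second step: the crucial inequality $\sum_{j\in{\cal H}^N(1)}p_{j1}^N(\ell)\ell_j{\le}CF_N/h_N$ is uniformly bounded only in the dense regime $\liminf h_N/N{>}0$, precisely matching the hypothesis of the theorem. A secondary delicate point is the almost-sure mass condition $\P(\croc{\Pi_0,I}{=}\beta){=}1$ in Assumption~\ref{CondI}: it should follow from an $L^2$-concentration of $\croc{\widehat{\Lambda}_1^N,I}$ around the global value $F_N/N$ in this dense regime, but that requires a separate variance estimate which I do not develop here.
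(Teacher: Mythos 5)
Your proposal follows essentially the same route as the paper: establish uniform exponential moments of $\widehat{L}_1^N$ via the stationary balance equation for $\ell\mapsto e^{\eta\ell_1}$, deduce tightness of $(\widehat{\Lambda}_1^N)$, extract a convergent subsequence, verify Assumption~\ref{CondI} for the stationarily started process, and then combine Theorem~\ref{theoLED} with stationarity of the finite-$N$ chain to get invariance of the limit. The cosmetic differences are minor: for the exponential moment the paper expands the binomial MGF as a first-order term plus an explicit remainder $U_N(\ell)$ and bounds that remainder by a constant times $(e^\eta{-}1)^2 F_N^2/h_N^2$, while you shortcut with the upper bound $(1{+}p(e^\eta{-}1))^{\ell}{\le}e^{\ell p(e^\eta{-}1)}$; both use $p_{j1}^N{\le}C_0/h_N$ and the dense regime $\liminf h_N/N{>}0$ in exactly the same way. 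For tightness the paper cites Lemma~3.2.8 of Dawson whereas you do a direct Markov argument; also equivalent.

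The one place you stop short --- the almost-sure mass condition $\P(\langle\Pi_0,I\rangle{=}\beta){=}1$ of Assumption~\ref{CondI} --- is in fact also the terse spot in the paper's own proof, and your honest flag is apt. The paper's argument is to say that ``with the same argument as in the proof of Theorem~\ref{theoED}'' the \emph{global} empirical $\widehat{\Lambda}^{N_k}{=}\tfrac1{N_k}\sum_i\widehat{\Lambda}_i^{N_k}$ also converges to the \emph{same} limit $\widehat{\Pi}$, after which $\langle\widehat{\Lambda}^{N_k},I\rangle{=}F_{N_k}/N_k{\to}\beta$ gives $\langle\widehat{\Pi},I\rangle{=}\beta$ a.s. Observe, however, that the proof of Theorem~\ref{theoED} invokes Theorem~\ref{theoLED}, which already requires the mass condition; so taken literally the cited argument is not available at this stage, and what is really needed is exactly the $L^2$-concentration of $\langle\widehat{\Lambda}_1^N,I\rangle$ around $F_N/N$ that you identify (or some other argument showing the global and local empirical measures share a common limit on a joint probability space). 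In the full-graph regime $h_N{=}N{-}1$ this is trivial, since $\|\widehat{\Lambda}^N{-}\widehat{\Lambda}_1^N\|_{\rm tv}{\le}2/N$; in the general dense regime a short variance estimate along the lines you sketch is required. So the proposal is correct in structure and matches the paper; the gap you acknowledge is precisely where the paper's own argument is compressed, and you correctly describe what would be needed to close it.
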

\begin{proof}
We first show that some  exponential moments of the variables $\widehat{L}_1^N$, $N{\ge}1$,  are bounded. 
For $\eta{>}0$,  the balance equation for the function $f(\ell){=}\exp(\eta \ell_1)$, $\ell{\in}{\cal S}_N$, gives the relation, remember that $L_1^N{\leq}F_N$, 
\[
  \E\left(e^{\eta \widehat{L}_1^N}\right){-}1{=}\E\left(e^{\eta \widehat{L}_1^N}\hspace{-1mm}\left[(e^\eta{-}1) \sum_{\mathclap{i\in{\cal H}^N(1)}} \widehat{L}_i^Np^N_{i1}\left(\widehat{L}^N\right){+}U_N\left(\widehat{L}^N\right)\right]\right),
\]
where, for $\ell{\in}{\cal S}_N$,
\[
U_N(\ell)=\sum_{i{\in}{\cal H}^N(1)}\left[1+p^N_{i1}(\ell)(e^\eta{-}1)\right]^{\ell_i} {-}1{-}\ell_ip^N_{i1}(\ell)(e^\eta{-}1).
\]
By using Relation~\eqref{Binom+}, we have $p^N_{i1}(\ell){\le}C_0/h_N$, we get then the estimation
\begin{multline*}
  |U_N(\ell)| \leq \sum_{i{\in}{\cal H}^N(1)} \left(e^\eta{-}1\right)^2\frac{C_0^2\ell_i^2}{2h_N^2}\left(1+\frac{C_0}{h_N}(e^\eta{-}1)\right)^{\ell_i-2}\\
{\leq}  \left(e^\eta{-}1\right)^2\frac{C_0^2}{2}\left(1{+}\frac{C_0}{h_N}(e^\eta{-}1)\right)^{F_N} \frac{F_N^2}{h_N^2},
\end{multline*}
since $\ell{\in}{\cal S}_N$. The assumption of the sequence $(h_N)$ shows that there exists some $\eta_1{>}0$ such that if $\eta{<}\eta_1$ then the last term is bounded by $D_0(e^\eta{-}1)^2$ for some constant $D_0{\ge}0$. By using this inequality, we get the relation
\begin{multline*}
  \E\left(e^{\eta \widehat{L}_1^N}\right){-}1 \leq \E\left(e^{\eta \widehat{L}_1^N}\left(\sum_{i\in{\cal H}^N(1)} \widehat{L}_i^N\frac{C_0}{h_N}\left(e^\eta{-}1\right){+}D_0(e^\eta{-}1)^2\right)\right)\\
\leq   \E\left(e^{\eta \widehat{L}_1^N}\left(D_1\left(e^\eta{-}1\right) {+}D_0(e^\eta{-}1)^2\right)\right),
\end{multline*}
for some constant $D_1{\ge}0$.  If $\eta_0{<}\eta_1$ is such that $D_1(e^{\eta_0}{-}1){+} D_0(e^{\eta_0}{-}1)^2{<}1$, then we get that the exponential moments of order $\eta_0$ of $L_1^N$ are bounded,
\begin{equation}\label{gq1}
\sup_{N{\ge} 1}\E\left(e^{\eta_0 \widehat{L}_1^N}\right)<{+}\infty.
\end{equation}
For $K{>}0$, 
\begin{equation}\label{gq2}
\E\left(\widehat{\Lambda}_1^N([K,{+}\infty))\right)=\P\left(\widehat{L}_1^N{\ge}K\right),
\end{equation}
{
from Lemma~3.2.8 of Dawson~\cite{Dawson}, we deduce that the sequence of local empirical distributions at equilibrium $(\widehat{\Lambda}_1^N)$ is  tight for the convergence in distribution. We take a subsequence $(N_k)$ so that  the sequence $(\widehat{\Lambda}_1^{N_k})$  converge to a random probability distribution on $\N$, $\widehat{\Pi}$.

  With the same argument as in the proof of  Theorem~\ref{theoED}, we have that the sequence of {\em global} empirical distribution $(\widehat{\Lambda}^{N_k})$ is also converging in distribution to $\widehat{\Pi}$. The relation of conservation of mass $\langle\widehat{\Lambda}^{N_k},I{\rangle}{=}F_{N_k}/N_k$, we thus get that, almost surely $\langle\widehat{\Pi},I\rangle{=}\beta$. The vector $\widehat{L}^{N_k}$ satisfies therefore the conditions of Assumption~\ref{CondI}.

  We define the process $(\widehat{L}_i^N(t))$ associated to the SDE~\eqref{SDE} with the initial point $(\widehat{L}_i^N)$ and $(\widehat{\Lambda}_1^{N}(t))$ the corresponding local empirical distributions.  Theorem~\ref{theoLED} shows the convergence in distribution
  \[
  \lim_{k\to+\infty} \left(\widehat{\Lambda}_1^{N_k}(t)\right)=\left(\Pi(t)\right),
  \]
  where $(\Pi(t))$ is the  solution of Equation~\eqref{FP} with initial point $\widehat{\Pi}$. The last assertion comes from the fact that for $s{\ge}0$ and $k{\in}\N$, by invariance, we have the equality 
  \[
 {\rm Law}\left[ \left(\widehat{\Lambda}_1^{N_k}(t), t{\ge}0\right)\right]=  {\rm Law}\left[\left(\widehat{\Lambda}_1^{N_k}(t{+}s), t{\ge}0\right)\right].
  \]
The proposition is proved.
}
  \end{proof}
The following section is devoted to the properties of the invariant distributions of the McKean{-}Vlasov process. 
\section{Equilibrium of the McKean{-}Vlasov Process}\label{SecInv}
Recall that $\beta$ is the average load of an urn at any time. The purpose of this section is to investigate the properties of the tail distribution of the number of balls in an urn at equilibrium when $\beta$ get large. Recall that in our asymptotic picture, the time evolution of the number of balls in a given urn can be seen as  the solution of the  SDE defining the  McKean{-}Vlasov process
\begin{equation}\label{McKV2}
\diff L_\beta(t) =\overline{\cal P}\left(\diff t{\times}\left[0,\beta\,\rule{0mm}{4mm} \Psi(\pi(t),L_\beta(t{-}))\right]\right){-}L_\beta(t{-}) {\cal P}(\diff t),
\end{equation}
where $\overline{\cal P}$ [resp. ${\cal P}$]  is an homogeneous Poisson point process on $\R_+^2$ [resp. $\R_+$] with rate $1$.

Suppose the process $(L_\beta(t))$ starts from equilibrium, {i.e.\   if $\pi_\beta{\in}M_1(\N)$ is the distribution of $L_{\beta}(0)$  then, for all $t{\ge}0$, the distribution $\pi(t)$ of $L_\beta(t)$ is constant and equal to $\pi_\beta$.} In this case  the process $(L_\beta(t))$ is a classical Markov jump process on $\N$. It is easily checked that any invariant distribution $\pi_\beta$ satisfies the following fixed point equation $F_\beta(\pi_\beta){=}\pi_\beta$ in $M_1(\N)$ where $F_\beta$ is defined by, for $\pi{\in}M_1(\N)$,
\begin{equation}\label{eqinv}
     F_\beta(\pi){=}\left( F_\beta(\pi)(n)\right)\steq{def}\left(\frac{1}{1{+}\beta\Psi(\pi,n)}\prod_{k=0}^{n-1} \frac{\beta\Psi(\pi,k)}{1+\beta\Psi(\pi,k)}\right),
\end{equation}
with the convention that $\prod_0^{{-}1}{=}1$. Clearly $F_\beta(\pi){\in}M_1(\N)$ and, due to Relation~\eqref{Psimass}, the quantity
\[
\sum_{n=0}^{+\infty}\prod_{k=0}^{n} \frac{\beta\Psi(\pi,k)}{1+\beta\Psi(\pi,k)}
\]
 is $\beta$ when $\pi$ is a fixed point of $F_\beta$.

The following proposition shows that, for a large class of functionals $\Psi$,  a potential invariant distribution is always concentrated around  values proportional to  its average $\beta$. We will give more precise results later for power of $d${-}choices algorithms. Additionally, an existence and uniqueness result is proved when the average load per urn is small enough. 
\begin{prop}[Existence and Uniqueness  of Invariant Measure]\label{propPsi}\ \\
If  Assumption~\ref{CondA} of Section~\ref{subsecallocation} holds for the functional $\Psi$,
\begin{enumerate}
\item if, for $\beta{>}0$, the distribution of a random variable $Y_\beta$ is  invariant for the McKean{-}Vlasov process~\eqref{McKV}, then $Y_\beta$ is stochastically bounded by a geometric distribution with parameter $\beta\|\Psi\|_\infty/(1{+}\beta\|\Psi\|_\infty)$, in particular, for any $x{>}0$,
\begin{equation}\label{tailinv}
\P\left(\frac{Y_\beta}{\beta}\geq x\right){\le}\exp\left(-\frac{\beta}{1{+}\beta\|\Psi\|_\infty}x\left(1-\frac{1}{2(1{+}\beta\|\Psi\|_\infty)}\right)\right).
\end{equation}
{
\item There exists $\beta_0{>}0$ such that, if $\beta{<}\beta_0$, the McKean{-}Vlasov process~\eqref{McKV} $(L(t))$ has a unique  invariant distribution $\pi_\beta$, and there exist  $\gamma_\beta{>}0$ such that, for any initial distribution $\pi_0{\in}M_1(\N)$ with a second moment, 
  \begin{equation}\label{expcv}
    W_2(\pi(t),\pi_\beta)\leq W_2(\pi_0,\pi_\beta)e^{-\gamma_\beta t},\quad \forall t{\ge}0,
  \end{equation}
 where $(\pi(t)){=}(\P(L(t){=}\cdot))$ and $W_2(\cdot,\cdot)$ is the Wasserstein distance defined by Relation~\eqref{Wp}.
}  \end{enumerate}
\end{prop}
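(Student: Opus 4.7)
For the first assertion, I would use directly the fixed-point representation \eqref{eqinv} of any invariant distribution. Writing $q_k := \beta\Psi(\pi_\beta,k)/(1+\beta\Psi(\pi_\beta,k))$, the formula gives $\pi_\beta(m) = (1-q_m)\prod_{k<m} q_k$, which, by a direct rearrangement (or via the ``first success'' interpretation of $\pi_\beta$ as the law of $\inf\{n\ge 0 : X_n = 1\}$ with independent $X_n \sim \mathrm{Bernoulli}(1-q_n)$) yields the clean tail expression
\[
\P(Y_\beta \geq n) = \prod_{k=0}^{n-1} q_k.
\]
Since $q_k \leq p := \beta\|\Psi\|_\infty/(1+\beta\|\Psi\|_\infty) < 1$ uniformly in $k$, this already gives stochastic domination by the geometric variable with parameter $1/(1{+}\beta\|\Psi\|_\infty)$. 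For the explicit estimate on $Y_\beta/\beta$, I would take $n = \lceil \beta x\rceil \geq \beta x$ to get $\P(Y_\beta \geq \beta x) \leq p^{\beta x} = e^{-\beta x \log(1/p)}$ and lower-bound $\log(1/p) = -\log(1-q)$, with $q = 1/(1{+}\beta\|\Psi\|_\infty)$, by the truncated Taylor estimate $-\log(1-q) \geq q(1-q/2)$, which produces exactly the exponent in \eqref{tailinv}.

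For the second assertion, my plan is a synchronous coupling of two McKean{-}Vlasov trajectories together with Banach's fixed-point theorem. Given initial distributions $\pi_1,\pi_2$ with finite second moment, I would realize $(L_1(t))$ and $(L_2(t))$ on the same probability space, driven by the same Poisson point processes $\overline{\cal P}$ and ${\cal P}$: at a point $(s,u)$ of $\overline{\cal P}$, both processes jump up if $u \leq \min(r_1(s),r_2(s))$, while only the one with the larger rate jumps when $u$ lies between the two, where $r_i(s){:=}\beta\Psi(\pi_i(s),L_i(s-))$; at a point of ${\cal P}$ both processes reset to $0$ simultaneously. Setting $D(t):=L_1(t)-L_2(t)$, Proposition~\ref{MPPMart} gives
\[
\frac{d}{dt}\E[D(t)^2] = \E\bigl[2D(r_1{-}r_2) + |r_1{-}r_2| - D^2\bigr].
\]

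The hard part will be to absorb the ``constant'' contribution $\E[|r_1-r_2|]$, which a priori is only linear in $\|\pi_1-\pi_2\|_{\rm tv}$ and thus not obviously controlled by $\E[D^2]$. The key observation is that $D(t)$ is integer-valued, so $|D(t)| \leq D(t)^2$ and $\mathbf{1}_{\{D(t)\neq 0\}} \leq D(t)^2$. Combining these with the Lipschitz estimates~\eqref{LipPsi} of Assumption~\ref{CondA} and the coupling bound $\|\pi_1(t){-}\pi_2(t)\|_{\rm tv} \leq \P(L_1(t)\neq L_2(t)) \leq \E[\mathbf{1}_{\{D(t)\neq 0\}}]$ should produce
\[
\frac{d}{dt}\E[D(t)^2] \leq \bigl(3\beta(C_\Psi{+}D_\Psi){-}1\bigr)\,\E[D(t)^2].
\]
Choosing $\beta_0 := 1/(3(C_\Psi{+}D_\Psi))$, Gr\"onwall's inequality gives exponential decay of $\E[D(t)^2]$ at rate $1{-}3\beta(C_\Psi{+}D_\Psi) > 0$ whenever $\beta<\beta_0$. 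Starting from a $W_2$-optimal initial coupling and using $W_2(\pi_1(t),\pi_2(t))^2 \leq \E[D(t)^2]$ then yields \eqref{expcv} with $\gamma_\beta := (1{-}3\beta(C_\Psi{+}D_\Psi))/2$.

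Existence and uniqueness of $\pi_\beta$ would then follow by applying Banach's fixed-point theorem to the semigroup $T_t : \pi_0 \mapsto \pi(t)$ on the complete metric space of probability distributions on $\N$ with finite second moment, endowed with $W_2$. The contraction just established shows that $T_t$ is a strict $W_2$-contraction of this space into itself (stability is obtained by coupling with the trivial initial condition $\delta_0$, whose image has finite, indeed exponential, moments by Proposition~\ref{propExp}). The unique fixed point of $T_1$ commutes with the whole semigroup and is therefore invariant for every $T_t$, providing the desired $\pi_\beta$.
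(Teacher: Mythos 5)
Your proof is correct and reaches the same estimates, but it routes around the paper's arguments at a couple of places. For part~(1) the paper couples $(L_\beta(t))$ (started at equilibrium, hence an ordinary Markov jump process with up-rate $\beta\Psi(\pi_\beta,n)$ and catastrophe rate~$1$) with the dominating process that jumps up at constant rate $\beta\|\Psi\|_\infty$; the geometric stochastic bound is then read off the dominating process. Your argument instead reads the tail formula $\P(Y_\beta\ge n)=\prod_{k<n}q_k$ directly from the fixed-point representation~\eqref{eqinv} and bounds each $q_k$ uniformly. Both are valid; yours is more algebraic and produces~\eqref{tailinv} with a one-line Taylor bound. For the exponential $W_2$ decay in part~(2), you and the paper use the same synchronous coupling through the shared $\overline{\cal P}$, ${\cal P}$ and the same generator computation for $D(t)^2$; the only substantive difference is how the ``constant'' term $\|\pi_1(t){-}\pi_2(t)\|_{\rm tv}$ is absorbed --- you bound it by $\P(L_1{\neq}L_2)\le\E[D^2]$ and close the ODE in $\E[D^2]$, whereas the paper bounds it by $W_2^2$ (via $W_1\le W_2^2$ for integer marginals) and runs Gr\"onwall for $W_2^2$ directly. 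Either way one gets a positive decay rate for $\beta$ small. Where you genuinely depart is existence/uniqueness: the paper proves it separately by showing the map $F_\beta$ of~\eqref{eqinv} is a total-variation contraction when $\beta D_\Psi(1{+}\beta\|\Psi\|_\infty)^2{<}1$, then deduces that the (already known to be unique) invariant law has a second moment from part~(1); you instead deduce existence and uniqueness from the $W_2$-contraction of the flow $T_t$ by Banach's theorem. That works, but note the small circularity in your stability claim: you invoke the just-proved contraction estimate to conclude $T_t\pi_0$ has finite second moment, while the contraction estimate itself needs both marginal laws to lie in $W_2$. The fix is routine --- argue directly from the SDE bound $L(t)\le L(0)+\overline{\cal P}([0,t]\times[0,\beta\|\Psi\|_\infty])$ that finite second (indeed exponential, as in Proposition~\ref{propExp}) moments propagate --- and should be stated before, not after, the contraction is used.
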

{It should be noted that for non-linear Markov processes, Inequalities like~\eqref{expcv} are, in general, delicate to obtain, even when there is a unique invariant distribution. See for example, Carillo et al.~\cite{Carillo} for non-linear diffusions associated to Langevin evolution equation, or Caputo et al.~\cite{Caputo} in a discrete state space setting. In our case a convenient coupling  and some calculus give the desired exponential decay of the Wasserstein distance to equilibrium.}
\begin{proof}
Let $\pi_\beta$ be the distribution of $Y_\beta$, If the initial distribution of the McKean{-}Vlasov process is $\pi_\beta$, then $(L_\beta(t))$ is a simple Markov process which jumps from $n$ to $n{+}1$ at rate $\beta\Psi(\pi_\beta,n)$ and returns at $0$ with rate $1$. Denote by $(\widetilde{L}_\beta(t))$  a Markov process on $\N$ with the same characteristics except that the jumps from $n$ to $n{+}1$ occur at rate $\beta\|\Psi\|_\infty$. It is easy to construct a coupling such that $\widetilde{L}_\beta(0){=}{L}_\beta(0){=}Y_\beta$ and that ${L}_\beta(t){\le}\widetilde{L}_\beta(t)$ holds for all $t{\ge}0$.  By assumption, the distribution of ${L}_\beta(t)$ is constant with respect to $t$. It is easy to check that the invariant distribution of $(\widetilde{L}_\beta(t))$ is a geometric distribution with parameter $\beta\|\Psi\|_\infty/(1{+}\beta\|\Psi\|_\infty)$. This proves the first part of the proposition. 

For $k{\in}\N$ and $\pi{\in}M_1(\N)$, one defines
  \[
a_\pi(k){\steq{def}}\frac{\beta\Psi(\pi,k)}{1{+}\beta\Psi(\pi,k)},
\]
the function $F_\beta$ can be expressed as, for $n{\in}\N$,
\[
F_\beta(\pi)(n)=\prod_0^{n-1}a_\pi(k){-}\prod_0^{n}a_\pi(k).
\]
From Relations~\eqref{LipPsi}  and the boundedness of $\Psi$, we get that
\[
\|a_\pi{-}a_{\pi'}\|_{\infty}\leq \beta D_\Psi \|\pi{-}\pi'\|_{\rm tv} \quad\text{ and }\quad a_\pi(k)\leq \delta\steq{def}\frac{\beta\|\Psi\|_\infty}{1{+}\beta\|\Psi\|_\infty}
\]
hold for any $\pi$, $\pi'{\in}M_1(\N)$ and $k{\in}\N$. Note that, for $n{\in}\N$,
\[
\prod_0^{n}a_{\pi'}(k){-}\prod_0^{n}a_{\pi}(k)
=\sum_{m=0}^{n}\prod_0^{m}a_{\pi}(k)\prod_{m+1}^{n}a_{\pi'}(k){-}\prod_0^{m-1}a_{\pi}(k)\prod_{m}^{n}a_{\pi'}(k),
\]
and therefore that, for $n{\ge}1$,
\[
  \left|\prod_0^{n}a_{\pi'}(k){-}\prod_0^{n}a_{\pi}(k)\right|
 \leq \sum_{m=0}^{n}\left|a_\pi(m){-}a_{\pi'}(m)\right|\delta^{n}\leq (n{+}1)\delta^{n}\beta D_\Psi \|\pi{-}\pi'\|_{\rm tv}.
\]
This gives the Lipschitz property for the mapping $F_\beta$ for the total variation norm, 
\begin{align*}
  \|F_\beta(\pi'){-}F_\beta(\pi)\|_{\rm tv}&=\frac{1}{2}\sum_{n=0}^{+\infty} |F_\beta(\pi')(n){-}F_\beta(\pi)(n)|\\
  & \leq \sum_{n=0}^{+\infty} \left|\prod_0^{n}a_{\pi'}(k){-}\prod_0^{n}a_{\pi}(k)\right|
\leq \frac{\beta D_\Psi}{(1{-}\delta)^2} \|\pi'{-}\pi\|_{\rm tv}. 
\end{align*}
Hence if $\beta D_\Psi(1{+}\beta\|\Psi\|_\infty)^2{<}1$, $F_\beta$ is a contracting application for the total variation norm. Hence we get the existence and uniqueness of an invariant distribution.

{
  Let $\sigma_a$ and $\sigma_b{\in}M_1(\N)$, two probability distributions on $\N$ with a second moment, and $A$ and $B$ two integer valued random variables whose distributions are respectively $\sigma_a$ and $\sigma_b$. We denote 
$(L_a(t))$ and $(L_b(t))$ the solutions of the McKean-Vlasov equation~\eqref{McKV} such that $L_a(0){=}A$ and $L_b(0){=}B$ and, for $c{\in}\{a,b\}$,
\[
\diff L_c(t) =\overline{\cal P}\left(\diff t{\times}\left[0,\beta\,\rule{0mm}{4mm} \Psi(\pi_c(t),L(t{-}))\right]\right){-}L_c(t{-}) {\cal P}(\diff t),
\]
where $\pi_c(t)$ is the distribution of $L_c(t)$. Note that for both processes we use the same Poisson processes $\overline{\cal P}$ and ${\cal P}$ with intensity $1$ on $\R_+^2$ and $\R_+$ respectively. It is not difficult to show that, for all $t{\ge}0$ and $c{\in}\{a,b\}$, $L_c(t)$ has a second moment.

By using the SDE's, we get that
\begin{multline*}
(L_a(t){-}L_b(t))^2\le(A{-}B)^2{-}\int_0^t\left(L_a(s{-}) {-}L_b(s{-}) \right)^2{\cal P}(\diff s)\\
{+}\int_0^t\left(2\left|L_a(s{-}){-}L_b(s{-})\right|+1\right) \overline{\cal P}\left(\diff s{\times}J(s)\right)
\end{multline*}
where $J(s)$ is the intervals whose end points are $\beta\,\Psi(\pi_c(s),L_c(s{-}))$, $c{\in}\{a,b\}$. From  Assumption~\ref{CondA} of Section~\ref{SecMod}, there exists a constant $C$ such that
\[
|J(s)|\steq{def}\int_{J(s)}\diff u\leq \beta C\left(|L_a(s{-}){-}L_b(s{-})|+\|\pi_a(s){-}\pi_b(s)\|_{tv}\right).
\]
By taking the expectation in the last inequality we obtain that, for $t{\ge}0$,
\begin{multline*}
\E\left[(L_a(t){-}L_b(t))^2\right]\le\E\left[(A{-}B)^2\right]{-}\int_0^t\E\left[\left(L_a(s) {-}L_b(s) \right)^2\right]\,\diff s\\
{+}\beta C\int_0^t\left(2\left|L_a(s){-}L_b(s)\right|+1\right)\left(|L_a(s){-}L_b(s)|+\|\pi_a(s){-}\pi_b(s)\|_{tv}\right) \diff s
\end{multline*}
 Since the processes are integer-valued, by using that $|x|{\le}x^2$ for $x{\in}\Z$, we have, as in the proof of Theorem~\ref{theoMcKV}, for $s{\ge}0$, 
\[
\|\pi_a(s){-}\pi_b(s)\|_{tv}{\le}W_1(\pi_a(s),\pi_b(s))\leq W_2(\pi_a(s),\pi_b(s))^2.
\]
and with the fact that the total variation distance is bounded by $1$,
\begin{multline*}
  \int_0^t\left(2\left|L_a(s){-}L_b(s)\right|+1\right)\left(|L_a(s){-}L_b(s)|+\|\pi_a(s){-}\pi_b(s)\|_{tv}\right) \diff s
\\  \le   5\int_0^t \left(L_a(s){-}L_b(s)\right)^2\diff s
  +\int_0^t W_2(\pi_a(s),\pi_b(s))^2\,\diff s.
\end{multline*}
We thus get that 
\begin{multline*}
\E\left[(L_a(t){-}L_b(t))^2\right]+(1{-}5\beta C)\int_0^t\E\left[\left(L_a(s) {-}L_b(s) \right)^2\right]\,\diff s  \le\\ \E\left[(A{-}B)^2\right]
{+}\beta C\int_0^tW_2(\pi_a(s),\pi_b(s))^2 \diff s
\end{multline*}
holds. If we choose $\beta$ so that $\gamma{=}(1{-}6\beta C)/2{>}0$, it gives
\begin{multline*}
W_2(\pi_a(t),\pi_b(t))^2+(1{-}5\beta C)\int_0^tW_2(\pi_a(s),\pi_b(s))^2\,\diff s  \\\le \E\left[(A{-}B)^2\right]
{+}\beta C\int_0^tW_2(\pi_a(s),\pi_b(s))^2 \diff s.
\end{multline*}
Gr\"onwall's Inequality gives the relation
\[
W_2(\pi_a(t),\pi_b(t))\leq e^{-\gamma t}\sqrt{\E\left[(A{-}B)^2\right]}.
\]
By taking the minimum on all couplings $(A,B)$ with marginals $\sigma_a$ and $\sigma_b$, we get finally  the inequality 
\[
W_2(\pi_a(t),\pi_b(t))\leq e^{-\gamma t}W_2(\pi_a(0),\pi_b(0))
\]
Hence if $\beta$ is sufficiently small, Equation~\eqref{tailinv} shows that the unique invariant distribution has a second moment. Inequality~\eqref{expcv} is then a consequence of the last relation. The proposition is proved. }
\end{proof}

{
We can now give a stronger version of Proposition~\ref{TheoInv} when $\beta$ is sufficiently small.  Recall that
 $\widehat{L}^N{=}(\widehat{L}_i^N)$  is a random variable whose distribution is the invariant measure of $(L_i^N(t))$, and 
  \[
  \widehat{\Lambda}_1^N\steq{def} \frac{1}{N}\sum_{i\in {\cal H}_1} \delta_{\widehat{L}_i^N}
  \]
has the same distribution of  the local empirical distribution at node $1$ at equilibrium. 

\begin{prop}[Convergence of Invariant Distributions]\label{PropConv}
Under the conditions of Theorem~\ref{theoLED}, and if
  \[
  \liminf_{N\to+\infty}{h_N}/{N}{>}0,
  \]
there exists some $\beta_0{>}0$ such that if $\beta{\le}\beta_0$, then the sequence $(\widehat{\Lambda}_1^N)$ [ resp. $(\widehat{L}_1^N)$]  is converging in distribution to $\delta_{\pi_\beta}$ [resp. $\pi_\beta$], where $\pi_\beta$ is the unique invariant distribution of the McKean{-}Vlasov process. 
\end{prop}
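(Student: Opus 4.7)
The plan is to combine the tightness and invariance characterization already available from Theorem~\ref{TheoInv} with the uniqueness and $W_2$-contraction of Proposition~\ref{propPsi}(2). First, Theorem~\ref{TheoInv} gives that the sequence $(\widehat{\Lambda}_1^N)$ is tight in $M_1(\N)$ and that every subsequential weak limit $\widehat{\Pi}$ has a distribution $Q{=}\mathrm{Law}(\widehat{\Pi}){\in}M_1(M_1(\N))$ which is invariant for the Fokker--Planck flow~\eqref{FP}: whenever $\Pi(0){\sim}Q$, the marginal $\Pi(t)$ has distribution $Q$ for every $t{\ge}0$.

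Next I would verify that $\widehat{\Pi}$ carries enough integrability to invoke Proposition~\ref{propPsi}(2) $\omega$-by-$\omega$. The uniform bound~\eqref{gq1} gives $\sup_N \E(e^{\eta_0 \widehat{L}_1^N}){<}{+}\infty$, and Fatou's lemma along the converging subsequence yields $\E(\int_\N e^{\eta_0 x}\,\widehat{\Pi}(\diff x)){<}{+}\infty$, so $\widehat{\Pi}$ has a finite second moment almost surely. Choosing $\beta_0$ as in Proposition~\ref{propPsi}(2), the deterministic contraction $W_2(\pi(t),\pi_\beta){\le}e^{-\gamma_\beta t}W_2(\pi_0,\pi_\beta)$ can then be applied pathwise with $\pi_0{=}\widehat{\Pi}(\omega)$. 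This shows $W_2(\Pi(t),\pi_\beta){\to}0$ almost surely, hence $\Pi(t){\to}\pi_\beta$ in probability, as $t{\to}{+}\infty$. Combined with the stationarity $\Pi(t){\sim}Q$ for all $t$, this forces $Q{=}\delta_{\pi_\beta}$. Since every subsequential limit of the tight sequence $(\widehat{\Lambda}_1^N)$ has the same law, the full sequence converges in distribution to $\delta_{\pi_\beta}$.

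To deduce the convergence of the one-dimensional marginal $\widehat{L}_1^N$, I would use the translation symmetry of both the initial state (Assumption~\ref{CondI}) and the graph to see that all $\widehat{L}_i^N$, $1{\le}i{\le}N$, share a common law, so that
\[
\mathrm{Law}(\widehat{L}_1^N)(\{n\})=\frac{1}{h_N}\sum_{i\in{\cal H}^N(1)}\P(\widehat{L}_i^N{=}n)=\E\bigl(\widehat{\Lambda}_1^N(\{n\})\bigr),\quad n{\in}\N.
\]
Since $\widehat{\Lambda}_1^N{\to}\pi_\beta$ in probability and the functional $\sigma{\mapsto}\sigma(\{n\})$ is bounded and continuous on $M_1(\N)$, bounded convergence then gives $\mathrm{Law}(\widehat{L}_1^N)(\{n\}){\to}\pi_\beta(\{n\})$ for every $n{\in}\N$, which is the required convergence in distribution on $\N$.

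The main obstacle will be the $\omega$-by-$\omega$ application of the deterministic contraction from Proposition~\ref{propPsi}(2) to the random limit $\widehat{\Pi}$, since that statement is phrased for deterministic initial data. Justifying this sidestep requires the pathwise uniqueness of solutions of~\eqref{FP} from Theorem~\ref{theoMcKV} together with the exponential-moment propagation inherited from~\eqref{gq1}; once the measurability of $\omega{\mapsto}\Pi(t)(\omega)$ and the a.s.\ finiteness of $W_2(\widehat{\Pi},\pi_\beta)$ are established, the remainder of the argument is essentially mechanical.
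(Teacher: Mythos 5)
Your proposal is correct and follows essentially the same route as the paper: tightness and the invariance characterization from Theorem~\ref{TheoInv}, the exponential-moment bound~\eqref{gq1} to secure second moments, the $W_2$-contraction~\eqref{expcv} combined with stationarity to force the limiting law to be $\delta_{\pi_\beta}$, and the translation symmetry to pass from $\widehat{\Lambda}_1^N$ to $\widehat{L}_1^N$. The only cosmetic difference is that the paper passes directly to $\E\bigl(W_2(\Pi(t),\pi_\beta)\bigr)\le e^{-\gamma_\beta t}\,\E\bigl(W_2(\Pi(0),\pi_\beta)\bigr)$ and invokes stationarity of the expectation to get $\E\bigl(W_2(\Pi(0),\pi_\beta)\bigr)=0$, whereas you detour through almost-sure convergence of $\Pi(t)$ and then convergence in probability; the pathwise application you worry about is in fact also implicit in the paper's step (it applies~\eqref{expcv} with $\pi_0{=}\Pi(0)(\omega)$ before taking expectations), so there is no real gap to fill.
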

\begin{proof}
From Theorem~\ref{TheoInv}, we know that the sequence  $(\widehat{\Lambda}_1^N)$ is tight the distribution of any limiting point $Q$ is an invariant distribution of Relation~\eqref{FP}. Let $(\Pi(t))$ the corresponding process with $Q$ as the distribution of $\Pi(0)$, we know it is stationary. Inequality~\eqref{gq1} shows that $Q$ has a finite  exponential moment and, therefore, a second moment. Relation~\eqref{expcv} gives, for $\beta$ sufficiently small and $t{\ge}0$,
  \[
  \E\left({W}_2(\Pi(t),\pi_\beta)\right)\leq e^{-\gamma_\beta t} \E\left({W}_2(\Pi(0),\pi_\beta)\right),
  \]
from  the stationarity property of $(\Pi(t))$ we get
  \[
  \E\left({W}_2(\Pi(t),\pi_\beta)\right)=  \E\left({W}_2(\Pi(0),\pi_\beta)\right),
  \]
  consequently $\E({W}_2(\Pi(0),\pi_\beta)){=}0$, $Q$ the distribution of $\Pi(0)$ is therefore the Dirac mass at $\pi_\beta$,  the sequence  $(\widehat{\Lambda}_1^N)$ is converging in distribution to $\delta_{\pi_\beta}$
This implies that, for any bounded function $f$ on $\N$, we have
  \[
  \lim_{N\to+\infty} \E\left(\widehat{\Lambda}_1^N\right)=\croc{\pi_\beta,f},
    \]
    and the symmetry property gives $\E(\widehat{\Lambda}_1^N(f)){=}\E(f(\widehat{L_1}^N))$. The convergence in distribution of $(\widehat{L_1}^N)$ to $\pi_\beta$  is proved.
\end{proof}
  }
{  
  As it will be seen for specific functionals $\Psi$ much more can be said, in particular the existence and uniqueness of the invariant distribution of the McKean{-}Vlasov process {\em for all} $\beta{>}0$. The rest of this section is devoted to the Random Weighted Algorithm and the Power of $d${-}choices Algorithm,  the existence and uniqueness of the invariant measure of the  McKean{-}Vlasov process defined by SDE~\eqref{McKV} is established and its limiting behavior when the load $\beta$ is large is investigated.}

\subsection{Random Weighted Algorithm}
For $\sigma{\in}M_1(\N)$, the function $\Psi$ is, in this case, defined by
\[
  \Psi_{\rm cc}(\sigma,l)=\frac{W(l)}{\croc{\sigma,W}},
  \]
  and  the range of $W$ is assumed to be in $[c,C]$, for some positive constants $c$ and $C$.

  It is not difficult to see that there exists a unique invariant distribution $\pi_\beta$. If $\pi{\in}M_1(\N)$ is invariant if and only if  it  has  the representation 
  \[
(\pi(n))=  \left(\frac{\gamma}{\gamma {+}\beta W(n)}\prod_{k=0}^{n-1} \frac{\beta W(k)}{\gamma{+}\beta W(k)}\right),
  \]
  where $\gamma{=}\croc{\pi,W}$, i.e.\ 
\begin{equation}\label{InvCC}
\beta =\sum_{n=0}^{+\infty}\prod_{k=0}^{n} \frac{\beta W(k)}{\gamma{+}\beta W(k)}.
\end{equation}
It is easy to see that this equation has a unique solution $\gamma_\beta$ which gives the existence and the uniqueness of the invariant distribution in this case. 

When $(W(k))$ is constant equal to $w{>}0$,  then balls are placed at random in the neighboring urns, independently of their loads in particular. In this case $\gamma_\beta{=}w$ and the corresponding invariant measure is the geometric distribution with parameter $\beta /(1{+}\beta )$. 
\begin{prop}
For the Random Algorithm and any $\beta{>}0$, the geometric distribution with parameter $\beta /(1{+}\beta )$ is the equilibrium distribution of the McKean{-}Vlasov process defined by SDE~\eqref{McKV}, if $Y_\beta$ is a random variable with such a distribution, for the convergence in distribution, 
\[
\lim_{\beta\to+\infty}\frac{Y_\beta}{\beta}=E_1,
\]
where $E_1$ is an exponentially distributed random variable with parameter $1$.
\end{prop}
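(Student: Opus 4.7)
The plan is to treat the two assertions in turn, both being short computations once the general framework of Proposition~\ref{propPsi} is specialised to the Random Algorithm.

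For the invariant distribution, I would first observe that for the Random Algorithm the weights are $p_j^N(\ell) = 1/h_N$ for $j \in {\cal H}^N$, so $h_N p_j^N(\ell) \equiv 1$ and Assumption~\ref{CondA} is satisfied with the constant functional $\Psi \equiv 1$; equivalently, this is the particular case of the Random Weighted Algorithm with $W \equiv 1$ and $\gamma_\beta = 1$ in~\eqref{InvCC}. The fixed-point map $F_\beta$ of~\eqref{eqinv} then collapses: for every $\pi \in M_1(\N)$,
\[
F_\beta(\pi)(n) = \frac{1}{1+\beta}\left(\frac{\beta}{1+\beta}\right)^n, \quad n \in \N,
\]
so $F_\beta$ is a constant map whose image is the geometric distribution on $\N$ with parameter $\beta/(1+\beta)$. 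This distribution is therefore trivially its unique fixed point, and hence the unique invariant distribution of the McKean--Vlasov SDE~\eqref{McKV}.

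For the scaling limit, I would read the limit directly off the tail of the geometric distribution. With $Y_\beta$ having the above law, $\P(Y_\beta \geq k) = (\beta/(1+\beta))^k$ for $k \in \N$, hence for any $x > 0$,
\[
\P\!\left(\frac{Y_\beta}{\beta} \geq x\right) = \left(\frac{\beta}{1+\beta}\right)^{\lceil \beta x \rceil} = \exp\!\left(\lceil \beta x \rceil \log\!\left(1 - \frac{1}{1+\beta}\right)\right).
\]
Since $\lceil \beta x \rceil / \beta \to x$ and $\log(1 - 1/(1+\beta)) \sim -1/\beta$ as $\beta \to +\infty$, the right-hand side converges to $e^{-x}$, which is the tail of an exponential distribution with parameter~$1$. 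Pointwise convergence of the survival functions at every continuity point of the limit law yields convergence in distribution of $Y_\beta/\beta$ to $E_1$.

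There is no substantial obstacle in this argument: the proposition is really a sanity check on the general framework in the simplest possible case, both the explicit form of $\pi_\beta$ and the resulting scaling limit being manifestations of the classical geometric-to-exponential convergence.
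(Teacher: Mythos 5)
Your proof is correct and takes essentially the same route as the paper: the paper establishes the geometric form by specialising the Random Weighted Algorithm to constant $W$ (so that $\gamma_\beta = w$ in~\eqref{InvCC}), which is the same observation as your $\Psi \equiv 1$ collapsing $F_\beta$ to a constant map, and it leaves the geometric-to-exponential scaling limit implicit, which your explicit tail computation correctly supplies.
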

It turns out that the random algorithm behaves poorly in terms of the load of a given urn.  For a large $\beta$, the asymptotic tail distribution of the occupancy of an urn at equilibrium is, as expected, the upper bound~\eqref{tailinv}.  The simple consequence of this result is that, if on average, there are $\beta$ balls per urn, there is a significant fraction of urns with an arbitrarily large number of balls.  We will see that the situation is completely different for the power of $d${-}choices algorithm.

\subsection{Power of $\mathbf{d}${-}choices}
For $\sigma{\in}M_1(\N)$, the function $\Psi$ is, in this case, 
\[
\Psi_{\rm pc}(\sigma,l){=}\frac{(\sigma([l,{+}\infty))^d{-}\sigma((l,{+}\infty))^d}{\sigma(\{l\})}.
  \]
\begin{prop}\label{propPCInv}
For any $\beta{>}0$, there exists a unique invariant distribution $\pi_\beta$ of $(L_\beta(t))$, it is given by
\begin{equation}\label{fq1}
\pi_\beta(n)=\xi_n{-}\xi_{n+1},\quad n{\ge}0,
\end{equation}
where $(\xi_n)$ is the non-increasing sequence defined by induction by $\xi_0{=}1$ and
\begin{equation}\label{eqpc}
  \xi_n{=}\beta\left(\xi_{n-1}^d{-}\xi_n^d\right), \quad n{\ge}1.
\end{equation}
{For any $\sigma\in M_1(\N)$ with finite second moment, if $\sigma(t)$ is the distribution of the solution of McKean{-}Vlasov Equation~\eqref{McKV} at time $t$  with initial distribution $\sigma$, then  we have
\begin{equation}\label{cvpc}
  \lim_{t\to\infty}\|\sigma(t){-}\pi_\beta\|_{\rm{tv}}=0.
\end{equation}
}
\end{prop}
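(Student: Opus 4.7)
My plan is to recast the McKean{-}Vlasov dynamics in terms of the tail functions $\zeta_n(t){\steq{def}}\sigma(t)([n,{+}\infty))$, which turns the forward equation~\eqref{FP} into a triangular cascade of scalar ODEs driven by $\zeta_0{\equiv}1$. Applying the generator $\Omega_{\sigma(t)}$ to the test function $\mathbbm{1}_{[n,+\infty)}$ and using the identity $\Psi_{\rm pc}(\sigma,n{-}1)\sigma(\{n{-}1\}){=}\sigma([n{-}1,\infty))^d{-}\sigma([n,\infty))^d$ built into the definition of $\Psi_{\rm pc}$, one obtains, for $n{\ge}1$,
\[
\dot\zeta_n(t)=\beta\bigl(\zeta_{n-1}(t)^d{-}\zeta_n(t)^d\bigr){-}\zeta_n(t).
\]
Equilibria of~\eqref{FP} then correspond exactly to sequences $(\xi_n)$ satisfying the recursion~\eqref{eqpc}.

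For the invariant distribution, I observe that $x{\mapsto}x{+}\beta x^d$ is a continuous strictly increasing bijection of $[0,{+}\infty)$, so~\eqref{eqpc} determines $\xi_n$ uniquely from $\xi_{n-1}$; by induction $0{<}\xi_n{<}\xi_{n-1}$, and passing to the limit in~\eqref{eqpc} forces $\lim_n \xi_n{=}0$. Hence $\pi_\beta(n){\steq{def}}\xi_n{-}\xi_{n+1}$ is a probability on $\N$, and Abel summation gives $\sum_n n\pi_\beta(n){=}\sum_{n\ge1}\xi_n{=}\beta\sum_{n\ge1}(\xi_{n-1}^d{-}\xi_n^d){=}\beta$, consistent with conservation of mass. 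Invariance reduces to $\croc{\pi_\beta,\Omega_{\pi_\beta}\mathbbm{1}_{[n,+\infty)}}{=}0$, which is exactly~\eqref{eqpc}; conversely, any invariant $\pi$ yields tails satisfying the same recursion from $\xi_0{=}1$, so $\pi{=}\pi_\beta$, giving uniqueness.

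The heart of the proof is~\eqref{cvpc}, which I would establish in two stages. First, by induction on $n$, I show $\zeta_n(t){\to}\xi_n$: the base $n{=}0$ is trivial, and in the inductive step, $\zeta_{n-1}(t){\to}\xi_{n-1}$ makes the scalar ODE for $\zeta_n$ asymptotically autonomous with limit equation $\dot y{=}\beta(\xi_{n-1}^d{-}y^d){-}y$, whose right-hand side is strictly decreasing in $y{\ge}0$ and vanishes only at $\xi_n$; a direct sandwich argument, comparing $\zeta_n$ on $[T_\varepsilon,{+}\infty)$ with the autonomous equations obtained by freezing $\zeta_{n-1}^d$ at $\xi_{n-1}^d{\pm}\varepsilon$, yields $\limsup_t|\zeta_n(t){-}\xi_n|{=}O(\varepsilon)$. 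Second, pointwise convergence $\sigma(t)(n){=}\zeta_n(t){-}\zeta_{n+1}(t){\to}\xi_n{-}\xi_{n+1}{=}\pi_\beta(n)$ of probability masses on $\N$ upgrades to total variation convergence by Scheff\'e's Lemma. The finite second moment hypothesis ensures tightness of $(\sigma(t))$---the mean $\E_\sigma[L_\beta(t)]$ converges exponentially to $\beta$ by a direct computation from~\eqref{McKV2} and $\croc{\sigma,\Psi_{\rm pc}(\sigma,\cdot)}{\equiv}1$---so that no mass escapes to infinity during the cascade analysis. The main obstacle I anticipate is controlling propagation of errors through the cascade; fortunately the linearization of $y{\mapsto}\beta(\xi_{n-1}^d{-}y^d){-}y$ at $\xi_n$ has slope $-(1{+}d\beta\xi_n^{d-1}){\le}{-}1$, which gives a uniform-in-$n$ local contraction rate and should let the induction close.
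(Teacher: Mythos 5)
Your identification of the equilibrium, the recursion for the tail sequence $(\xi_n)$, monotonicity, $\xi_n\to 0$, Abel summation to get mass $\beta$, and the uniqueness argument via the tail recursion are all correct and match the paper's first half essentially verbatim. For the convergence~\eqref{cvpc}, you take a genuinely different route. The paper does a quantitative Gr\"onwall-type induction: writing $\eta_n(t){=}\sigma(t)([n,\infty))$, it manipulates $e^{-t}\bigl(e^{t}(\eta_n{-}\xi_n)^2\bigr)'$ using $2\beta xy{\le}\beta^2x^2{+}y^2$ and $|x^d{-}y^d|{\le}d|x{-}y|$ on $[0,1]$ to obtain
\[
e^t\left(\eta_n(t){-}\xi_n\right)^2\le \bigl(1{+}(d\beta)^2\bigr)\sum_{k=0}^{n-2}\frac{(d\beta)^{2k}t^k}{k!},
\]
and then splits into the regimes $\beta{<}1/d$ (uniform-in-$n$ exponential decay) and $\beta{\ge}1/d$ (a bound $(\sqrt{2}d\beta)^{n-1}e^{-t/4}$ that grows in $n$, compensated by truncating at the level where $\xi_n$ is small). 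You instead prove, by a sandwich against the frozen autonomous scalar ODEs $\dot y{=}\beta(\xi_{n-1}^d\pm\varepsilon{-}y^d){-}y$, the purely qualitative statement $\zeta_n(t){\to}\xi_n$ for each fixed $n$, and then invoke Scheff\'e's Lemma (pointwise convergence of probability mass functions on $\N$ implies total variation convergence) to conclude. This is a valid and arguably cleaner argument; what it buys is the avoidance of the explicit algebra and the $\beta\lessgtr 1/d$ case distinction, and what it loses is the explicit rate of convergence that the paper's estimate provides.

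Two minor remarks. First, your worry about ``controlling propagation of errors through the cascade'' and the ``uniform-in-$n$ local contraction rate'' is actually unnecessary for your own argument: since you only need $\zeta_n(t){\to}\xi_n$ for each fixed $n$ (with $n$-dependent rate allowed) before invoking Scheff\'e, the induction closes with no uniformity requirement. Second, the finite second moment hypothesis is not really used in your Scheff\'e step (which needs no tightness) -- in both your proof and the paper's it plays only a background role in guaranteeing that the Fokker--Planck dynamics and the mean conservation $\croc{\sigma(t),I}{=}\beta$ are well defined for the McKean--Vlasov solution.
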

{
  The convergence in distribution of the McKean{-}Vlasov to its invariant measure obtained in Proposition~\ref{propPsi} in a general setting is with the assumption that $\beta$ is sufficiently small. Note that a related convergence~\eqref{cvpc} for the Power of Choices Algorithm is obtained for all $\beta{>}0$.
  }
\begin{proof}
The existence and uniqueness of such a sequence $(\xi_n)$ is clear with standard calculus.  Let $\pi$ be an invariant distribution of $(L_\beta(t))$, it satisfies the balance equation
\[
\pi(n)(1+\beta\Psi(\pi,n))=\pi(n{-}1)\beta\Psi(\pi,n{-1}),\quad n{\ge}1.
\]
and   $\pi(0)\beta\Psi(\pi,0){=}1{-}\pi(0)$.
Define $\xi_n=\pi([n,{+}\infty))$. The balance equation for $n{=}0$ shows that Relation~\eqref{eqpc} is clearly true for  $n{=}1$. If we assume that Relation~\eqref{fq1} holds up to $p$,
the balance equation can be rewritten as
  \[
(\xi_{p}{-}\xi_{p+1})+\beta\left(\xi_{p}^d{-}\xi_{p+1}^d\right)=\beta\left(\xi_{p-1}^d{-}\xi_p^d\right)=\xi_p,
  \]
hence Relation~\eqref{eqpc} is valid for $p{+}1$.  The first part of proposition is proved. 

{
Now let $\eta_n(t){=}\sigma(t)[n,\infty)$, from Relation~\eqref{McKV}, it is easy to see that $(\eta_n(t),n\ge 1)$ satisfies the following ODE, for $n{>}1$ and $t{\in}\R_+$,
\[
(\eta_n(t))'=\beta\left(\eta_{n-1}(t)^d{-}\eta_{n}(t)^d\right)-\eta_n(t).
\]
By using the recurrence relation defining the sequence $(\xi_n)$, we obtain
\begin{multline*}
e^{-t}\left(e^{t}\left(\eta_n(t){-}\xi_n\right)^2\right)'=
2\beta \left(\eta_n(t){-}\xi_n\right)\left(\eta_{n-1}(t)^d{-}\xi_{n-1}^d\right)\\
-2\beta \left(\eta_n(t){-}\xi_n\right)\left(\eta_n(t)^d{-}\xi_n^d\right)-\left(\eta_n(t){-}\xi_n\right)^2.
\end{multline*}
For $n{=}1$ this gives $(\eta_1(t){-}\xi_1)^2{\le}e^{-2t}$. By using the inequality $2\beta x y{\le}\beta^2x^2{+}y^2$, for $x$, $y{\in}\R$ and  that $|x^d{-}y^d|{\le}d|x{-}y|$ holds for $x$, $y{\in}[0,1]$, we get 
\[
e^{-t}\left(e^{t}\left(\eta_n(t){-}\xi_n\right)^2\right)'\le \beta^2d^2\left(\eta_{n{-}1}(t){-}\xi_{n-1}\right)^2,
\]
and therefore the relation
\[
e^t\left(\eta_n(t){-}\xi_n\right)^2\le (\beta d)^2\int_0^t e^s\left(\eta_{n-1}(s){-}\xi_{n-1}\right)^2\diff s +1,
\]
for all $n\ge 2$ and, by induction,
\begin{equation}\label{eeq1}
e^t\left(\eta_n(t)-\xi_n\right)^2\le (1+(d\beta)^2)\sum_{k=0}^{n-2}\frac{(d\beta)^{2k}t^k}{k!}.
\end{equation}
When $\beta{<}1/d$, this gives
\[
\sup_n\left|\eta_n(t){-}\xi_n\right|\le \sqrt{2} \exp{\left(((d\beta)^2{-}1)t/2\right)},
\]
and therefore the exponential stability in total variation distance. For $\beta{\ge} 1/d$ and $n{\ge}1$,  Inequality~\eqref{eeq1} gives the relation $|\eta_n(t){-}\xi_n|{\le}(\sqrt{2}d\beta)^{n-1}\exp({-}t/4)$, hence 
\[
\frac{1}{2}\sum_{k=0}^{n-1}\left|\sigma_k(t){-}\pi_\beta(k)\right|\le 
\sum_{k=0}^{n-1}(\sqrt{2}d\beta)^{k}e^{-t/4}=\frac{(\sqrt{2}d\beta)^n-1}{\sqrt{2}d\beta-1}e^{-t/4}.
\]
We conclude by using the elementary inequality
\[
\|\sigma(t){-}\pi_\beta\|_{\rm{tv}}\le \frac{1}{2}\sum_{k=0}^{n-1}\left|\sigma_k(t)-\pi_{\beta}(k)\right|+\frac{1}{2}|\eta_n(t)-\xi_n|+\xi_n.
\]
The proposition is proved. }
\end{proof}
The following theorem shows that the power of $d${-}choices policy is efficient in terms of the load of an arbitrary urn, the invariant distribution of this load is asymptotically concentrated on the finite interval $[0,d/(d{-}1)\beta]$. Only an extra  capacity $\beta/(d{-}1)$ has to be added  to the minimal capacity $\beta$ for any urn  in order to handle properly this allocation policy. This has important algorithmic consequences in some contexts. See Sun et al.~\cite{SSMRS2}.
\begin{theorem}[Power of $d${-}choices]\label{theoInv}
If, for  $\beta{>}0$, $Y_\beta$ is a random variable whose distribution is the unique invariant measure of the McKean{-}Vlasov process  $(L_\beta(t))$  defined by SDE~\eqref{McKV} then, for the convergence in distribution,
  \[
  \lim_{\beta\to+\infty}\frac{Y_\beta}{\beta}=\frac{d}{d{-}1}\left(1{-}U^{d-1}\right),
  \]
  where $U$ is a uniform random variable on $[0,1]$. 
\end{theorem}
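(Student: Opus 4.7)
\medskip

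\noindent\textit{Proof proposal.} By Proposition~\ref{propPCInv}, if $Y_\beta$ has the invariant distribution $\pi_\beta$ then its tail is $\P(Y_\beta{\ge}n){=}\xi_n^{(\beta)}$, where $\xi_0^{(\beta)}{=}1$ and the recurrence $\xi_n^{(\beta)}{=}\beta((\xi_{n-1}^{(\beta)})^d{-}(\xi_n^{(\beta)})^d)$ holds for $n{\ge}1$.  Since the candidate limit $Z{=}\frac{d}{d-1}(1{-}U^{d-1})$ has survival function $\P(Z{\ge}x){=}(1{-}\frac{d-1}{d}x)_+^{1/(d-1)}$, the goal reduces to showing that, for every continuity point $x{\ge}0$,
\[
\lim_{\beta\to+\infty} \xi_{\lceil \beta x\rceil}^{(\beta)} \;=\; \left(1{-}\tfrac{d-1}{d}x\right)_+^{1/(d-1)}.
\]

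The plan is to recognise this as the discrete counterpart of an elementary ODE and pass to the limit.  First, telescoping the recurrence gives the key identity
\[
(\xi_k^{(\beta)})^d \;=\; 1 - \frac{1}{\beta}\sum_{j=1}^{k} \xi_j^{(\beta)},\qquad k{\ge}0.
\]
Define the rescaled process $G_\beta(x){\steq{def}}(\xi_{\lfloor\beta x\rfloor}^{(\beta)})^d$, which is a nonincreasing $[0,1]$-valued function with $G_\beta(0){=}1$.  The identity above becomes
\[
G_\beta(x) \;=\; 1 - \frac{1}{\beta}\sum_{j=1}^{\lfloor \beta x\rfloor} G_\beta(j/\beta)^{1/d},
\]
whose right-hand side differs from the Riemann integral $1{-}\int_0^x G_\beta(s)^{1/d}\diff s$ by $O(1/\beta)$, uniformly in $x$ on compact sets, since the integrand is bounded by~$1$.

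Second, by Helly's selection theorem, from any subsequence of $(G_\beta)$ one can extract a further subsequence $(G_{\beta_k})$ converging pointwise, except on a countable set, to some nonincreasing $G{:}\R_+{\to}[0,1]$ with $G(0){=}1$.  Bounded convergence applied to the Riemann form of the previous display yields in the limit
\[
G(x) \;=\; 1 - \int_0^x G(s)^{1/d}\,\diff s,\qquad x{\ge}0,
\]
so $G$ is absolutely continuous and satisfies $G'(x){=}{-}G(x)^{1/d}$ with $G(0){=}1$.  On the open set $\{G{>}0\}$ separation of variables gives $\frac{d}{d-1}G(x)^{(d-1)/d}{=}\frac{d}{d-1}{-}x$, hence $G(x){=}(1{-}\frac{d-1}{d}x)^{d/(d-1)}$ on $[0,d/(d{-}1)]$, and $G$ is forced to vanish beyond this interval by nonincreasingness.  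This identifies the limit uniquely, so the entire sequence $(G_\beta)$ converges pointwise to the explicit $G$; taking $d$-th roots gives the desired tail convergence, from which convergence in distribution of $Y_\beta/\beta$ to $Z$ follows.

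The main obstacle is that the ODE $G'{=}{-}G^{1/d}$ is not Lipschitz at $G{=}0$, so uniqueness of the limit ODE could in principle fail at the boundary $x{=}d/(d{-}1)$ where $G$ touches zero.  The saving point is structural rather than analytic: every subsequential limit $G$ inherits monotonicity from $G_\beta$, which kills the non-uniqueness branch (one cannot leave zero and come back), so the explicit formula extends past the boundary by $G{\equiv}0$ and the full sequence converges.  Apart from that, the argument is a routine compactness-plus-identification scheme once the telescoping identity has been isolated.
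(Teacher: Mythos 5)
Your argument is correct and follows essentially the same strategy as the paper: telescope the recurrence~\eqref{eqpc} for the tail $\xi_n$ into a cumulative identity, rescale by $\beta$, extract a subsequential limit, and identify it through the first--order equation $G'{=}{-}G^{1/d}$, $G(0){=}1$. The minor differences are that you invoke Helly's selection theorem on the monotone step functions $G_\beta$ and bounded convergence (so you do not need the geometric tail bound~\eqref{tailinv} and second--moment control that the paper imports from Proposition~\ref{propPsi} to get tightness and uniform integrability of $(Z_\beta{-}x)^+$), and that you spell out the non--Lipschitz issue of the limiting ODE at $G{=}0$, which the paper disposes of with ``it is easy to see that this relation determines $h$.''
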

\begin{proof}
  For $k{\ge}1$, by summing up Equation~\eqref{eqpc} for $1$ to $k{-}1$, one gets the relation
  \[
  \beta\left(1-\P(Y_\beta\geq k)^d\right)=\E\left(Y_\beta{\wedge}k\right),
  \]
  since $\E(Y_\beta){=}\beta$, this gives
  \[
  \P(Y_\beta\geq k)^d=\frac{1}{\beta}\left(\E(Y_\beta){-}\E\left(Y_\beta{\wedge}k\right)\right)=\E\left(\left(\frac{Y_\beta}{\beta}{-}\frac{k}{\beta}\right)^+\right),
  \]
  hence, if $Z_\beta{\steq{def}} Y_\beta/\beta$,  for $x{>}0$,
  \[
  \P\left(Z_\beta\geq x\right)^d=\E\left(\left(Z_\beta{-}\frac{\lceil \beta x\rceil }{\beta}\right)^+\right). 
  \]
  Relation~\eqref{tailinv} shows that, for $\beta_1$ sufficiently large  the family of  random variables $Z_\beta$, $\beta{\ge}\beta_1$  is tight  and that the corresponding second moments are bounded. Let $Z$ be a limiting point when $\beta$ gets large and  $h(x){\steq{def}}\P(Z{\ge}x)$, $x{\ge}0$. The previous relation gives the identity, for $ x{\geq 0}$,
  \[
h(x)^d{=}\int_x^{+\infty} h(u)\,\diff u. 
\]
It is easy to see that this relation determines $h$ and therefore gives the desired convergence in distribution. The theorem is proved.
\end{proof}

\providecommand{\bysame}{\leavevmode\hbox to3em{\hrulefill}\thinspace}
\providecommand{\MR}{\relax\ifhmode\unskip\space\fi MR }
\providecommand{\MRhref}[2]{%
  \href{http://www.ams.org/mathscinet-getitem?mr=#1}{#2}
}
\providecommand{\href}[2]{#2}


\begin{thebibliography}{10}

\bibitem{Pra}
Luisa Andreis, Paolo~Dai Pra, and Markus Fischer, \emph{Mc{K}ean{-}{V}lasov
  limit for interacting systems with simultaneous jumps}, arXiv preprint
  arXiv:1704.01052.

\bibitem{Azar}
Yossi Azar, Andrei~Z Broder, Anna~R Karlin, and Eli Upfal, \emph{Balanced
  allocations}, SIAM journal on computing \textbf{29} (1999), no.~1, 180--200.

\bibitem{Budhiraja2}
Amarjit Budhiraja, Debankur Mukherjee, and Ruoyu Wu, \emph{Supermarket model on
  graphs}, Arxiv {\em https://arxiv.org/abs/1801.02979}.

\bibitem{Caputo}
Pietro Caputo, Paolo Dai~Pra, and Gustavo Posta, \emph{Convex entropy decay via
  the {B}ochner-{B}akry-{E}mery approach}, Annales de l'Institut Henri
  Poincar\'e Probabilit\'es et Statistiques \textbf{45} (2009), no.~3,
  734--753.

\bibitem{Carillo}
Jos\'{e}~A. Carrillo, Robert~J. McCann, and C\'{e}dric Villani, \emph{Kinetic
  equilibration rates for granular media and related equations: entropy
  dissipation and mass transportation estimates}, Revista Matem\'{a}tica
  Iberoamericana \textbf{19} (2003), no.~3, 971--1018.

\bibitem{Dawson}
Donald~A. Dawson, \emph{Measure-valued {M}arkov processes}, \'Ecole d'\'Et\'e
  de Probabilit\'es de Saint-Flour XXI---1991, Lecture Notes in Math., vol.
  1541, Springer, Berlin, 1993, pp.~1--260.

\bibitem{Diaconis}
Persi Diaconis, \emph{Group representations in probability and statistics.},
  Institute of Mathematical Statistics, Hayward, 1988.

\bibitem{Ehrenfest}
P~Ehrenfest, \emph{Uber zwei bekannte einwande gegen das boltzmannsche h
  theorem}, Phys. Z \textbf{8} (1907), 311--314.

\bibitem{Ethier}
Stewart~N. Ethier and Thomas~G. Kurtz, \emph{{M}arkov processes:
  Characterization and convergence}, John Wiley \& Sons Inc., New York, 1986.

\bibitem{Evans}
M~R Evans and T~Hanney, \emph{Nonequilibrium statistical mechanics of the
  zero-range process and related models}, Journal of Physics A: Mathematical
  and General \textbf{38} (2005), no.~19, R195.

\bibitem{Godreche}
C.~Godr\`eche and J.-M. Luck, \emph{Nonequilibrium dynamics of urn models},
  Journal of Physics: Condensed Matter \textbf{14} (2002), no.~7, 1601--1615.

\bibitem{Graham}
Carl Graham, \emph{Chaoticity on path space for a queueing network with
  selection of the shortest queue among several}, Journal of Applied
  Probability \textbf{37} (2000), no.~1, 198--211.

\bibitem{Graham2}
Carl Graham, \emph{Kinetic limits for large communication networks}, Modelling
  in Applied Sciences: A Kinetic Theory Approach (N.~Bellomo and M.~Pulvirenti,
  eds.), Birkhauser, 2000, pp.~317--370.

\bibitem{Jacobsen}
Martin Jacobsen, \emph{Point process theory and applications}, Probability and
  its Applications, Birkh\"auser Boston, Inc., Boston, MA, 2006.

\bibitem{Karthik}
A.~Karthik, Arpan Mukhopadhyay, and Ravi~R. Mazumdar, \emph{Choosing among
  heterogeneous server clouds}, Queueing Systems \textbf{85} (2017), no.~1,
  1--29.

\bibitem{Kingman}
J.~F.~C. Kingman, \emph{Poisson processes}, Oxford studies in probability,
  1993.

\bibitem{Kolchin}
Valentin~Fedorovich Kolchin, Boris~Aleksandrovich Sevastyanov, and
  Vladimir~Pavlovich Chistyakov, \emph{Random allocations}, Winston, 1978.

\bibitem{Last}
G\"unter Last and Andreas Brandt, \emph{Marked point processes on the real
  line}, Probability and its Applications (New York), Springer-Verlag, New
  York, 1995.

\bibitem{Peres}
David~A. Levin, Yuval Peres, and Elizabeth~L. Wilmer, \emph{{M}arkov chains and
  mixing times}, American Mathematical Society, Providence, RI, 2009, With a
  chapter by James G. Propp and David B. Wilson.

\bibitem{Lucon}
Eric Lu\c{c}on and Wilhelm Stannat, \emph{Mean field limit for disordered
  diffusions with singular interactions}, The Annals of Applied Probability
  \textbf{24} (2014), no.~5, 1946--1993. \MR{3226169}

\bibitem{Luczak}
Malwina~J. Luczak and Colin McDiarmid, \emph{Asymptotic distributions and chaos
  for the supermarket model}, Electronic Journal of Probability \textbf{12}
  (2007), no. 3, 75--99.

\bibitem{Maguluri}
S.~T. Maguluri, R.~Srikant, and L.~Ying, \emph{Stochastic models of load
  balancing and scheduling in cloud computing clusters}, 2012 Proceedings IEEE
  INFOCOM, March 2012, pp.~702--710.

\bibitem{Mitzenmacher}
Michael Mitzenmacher, Andr\'ea~W. Richa, and Ramesh Sitaraman, \emph{The power
  of two random choices: A survey of techniques and results}, in Handbook of
  Randomized Computing, 2000, pp.~255--312.

\bibitem{Muller}
Patrick~E. M\"uller, \emph{Limiting properties of a continuous local mean-field
  interacting spin system}, Ph.D. thesis, Rheinischen
  Friedrich-Wilhelms-Universit\"at Bonn, 2016.

\bibitem{SSMRS2}
Wen Sun, V\'eronique Simon, S\'ebastien Monnet, Philippe Robert, and Pierre
  Sens, \emph{Analysis of a stochastic model of replication in large
  distributed storage systems: A mean-field approach}, Proceedings of the ACM
  on Measurement and Analysis of Computing Systems \textbf{1} (2017), no.~1,
  24{:}1--24{:}21.

\bibitem{Sznitman}
A.S. Sznitman, \emph{Topics in propagation of chaos}, {\'E}cole d'{\'E}t\'{e}
  de {P}robabilit\'{e}s de {S}aint-{F}lour XIX --- 1989, Lecture Notes in
  Maths, vol. 1464, Springer-Verlag, 1991, pp.~167--243.

\bibitem{Dobrushin}
N.~D. Vvedenskaya, R.~L. Dobrushin, and F.~I. Karpelevich, \emph{A queueing
  system with a choice of the shorter of two queues---an asymptotic approach},
  Rossi\u\i skaya Akademiya Nauk. Problemy Peredachi Informatsii \textbf{32}
  (1996), no.~1, 20--34.

\end{thebibliography}
\end{document}